\def\para#1{\vskip .4\baselineskip\noindent{\bf #1}}
\newtheorem{thm}{Theorem}[section]
\newtheorem{deff}{Definition}[section]
\newtheorem{lem}{Lemma}[section]
\newtheorem{prop}{Proposition}[section]
\theoremstyle{definition}
\theoremstyle{remark}
\newtheorem{rem}{Remark}
\newtheorem{example}{Example}[section]
\numberwithin{equation}{section}
\newcommand{\eps}{\varepsilon}
\newcommand{\h}{\mathcal{H}}
\newcommand{\A}{\mathcal{A}}
\newcommand{\M}{\mathcal{M}}
\newcommand{\F}{\mathcal{F}}
\newcommand{\X}{\mathcal{X}}
\newcommand{\E}{\mathbb{E}}
\newcommand{\N}{\mathbb{N}}
\newcommand{\PP}{\mathbb{P}}
\newcommand{\R}{\mathbb{R}}
\newcommand{\co}{\bar{\mathrm{co}}}
\numberwithin{equation}{section}
\newcommand{\vo}{\vec 0}
\newcommand{\vx}{\vec x}
\newcommand{\vy}{\vec y}
\newcommand{\vX}{\vec X}
\newcommand{\vU}{\vec U}
\newcommand{\vW}{\vec W}
\newcommand{\vb}{\vec b}
\newcommand{\vh}{\vec h}
\newcommand{\vQ}{\vec Q}
\newcommand{\vM}{\vec M}
\newcommand{\vga}{\boldsymbol{\Gamma}}
\newcommand{\vps}{\boldsymbol{\Psi}}
\newcommand{\vpp}{\boldsymbol{\psi}}
\newcommand{\vta}{\boldsymbol{\tau}}
\newcommand{\vbe}{\boldsymbol{\beta}}
\newcommand{\wdt}{\widetilde}
\renewcommand{\vec}[1]{\mathbf{#1}}
\newcommand{\w}{{\vec w}}
\newcommand{\bed}{\begin{displaymath}}
\newcommand{\eed}{\end{displaymath}}
\newcommand{\bea}{\bed\begin{array}{rl}}
\newcommand{\eea}{\end{array}\eed}
\newcommand{\ad}{&\!\!\!\disp}
\newcommand{\barray}{\begin{array}{ll}}
\newcommand{\earray}{\end{array}}
\def\disp{\displaystyle}
\newcommand{\rr}{{\Bbb R}}
\newcommand{\1}{\boldsymbol{1}}
\def\bar{\overline}
\def\hat{\widehat}
\def\a.s{\text{\;a.s.\;}}
\def\a.e{\text{\;a.e.\;}}
\begin{document}
\title{Stochastic Approximation with Discontinuous Dynamics, Differential Inclusions, and Applications\thanks{This research
was supported in part by the Air Force Office of Scientific Research.}}
\author{Nhu N. Nguyen\thanks{Department of Mathematics, University of Connecticut, Storrs, CT
06269,
nguyen.nhu@uconn.edu} \and George Yin\thanks{Department of Mathematics, University of Connecticut, Storrs, CT
06269,
gyin@uconn.edu.}}
\maketitle

\begin{abstract}
This work develops new results for stochastic approximation algorithms. The emphases are on treating algorithms and limits with discontinuities.  The main ingredients include the use of differential inclusions, set-valued analysis, and non-smooth analysis, and stochastic differential inclusions. Under broad conditions, it is shown that a suitably scaled sequence of the iterates has a differential inclusion limit. In addition,  it is shown for the first time that a centered and scaled sequence of the iterates converges weakly to a stochastic differential inclusion limit.  The results are then used to treat several application examples including Markov decision process, Lasso algorithms, Pegasos algorithms, support vector machine classification, and learning. Some numerical demonstrations are also provided.

\medskip

\noindent {\bf Keywords.} Stochastic approximation,
 stochastic sub-gradient descent,
 differential inclusion, stochastic differential inclusion, convergence, rate of convergence.

\medskip
\noindent{\bf Subject Classification.} 62L20,  60H10, 60J60, 34A60.

\medskip
\noindent{\bf Running Title.} Stochastic Approximation with Discontinuity
\end{abstract}
\newpage

\addtocontents{toc}{\setcounter{tocdepth}{2}}
\tableofcontents

\section{Introduction}
This paper examines  stochastic approximation
from new angles. One of
the main motivations stems from the
minimization of a  non-differentiable function
or finding the zeros of a set-valued mapping
corrupted with random disturbances.
In contrast to the existing literature,
this paper focuses on stochastic approximation with discontinuous dynamics and set-valued mappings and
 develops new techniques for analyzing algorithms involving set-valued analysis and stochastic differential inclusions.

Let us begin with a
stochastic approximation algorithm of the form
\begin{equation}\label{eq-int-1}
\vX_{n+1}=\vX_n+a_n\vb_n(\vX_n,\xi_n),
\end{equation}
and the corresponding projection algorithm
\begin{equation}\label{eq-int-2}
\vX_{n+1} = \Pi_H(\vX_n+a_n\vb_n(\vX_n,\xi_n)),
\end{equation}
where $H$ is a
constrain set
and $\Pi_H$ is a projection operator.
  Introduced by Robbins-Monro in \cite{RM51} in 1951,  stochastic approximation algorithms have been  studied extensively with a wide range of applications \cite{B96,BMP90,KC78,KY03,L77,MP87}. In addition to the traditional areas,
  recent applications also include  cooperative dynamics and games \cite{BF12}
   and  multilevel Monte Carlo methods \cite{Fri16}.
When the sequence $\vb_n(\cdot,\xi_n)$ and the associated ``average''
satisfy some smoothness conditions,
 the asymptotic properties of the algorithms are
 relatively well-understood \cite{KC78,KY03,L77}.
We refer to such cases as ``stochastic approximation with continuous dynamics and continuous limits".
When $\vb_n(\cdot,\xi_n)$ is not necessarily continuous but its average
is continuous, the analysis can be found in \cite{K83,KY03, MP87}.
We refer to such cases as ``stochastic approximation with discontinuous dynamics and continuous limits".
In
 \cite{KY03},
$\vb_n(\cdot,\xi_n)$ is continuous while the limits
belong to a set-valued mapping is considered and is referred to as ``stochastic approximation with continuous dynamics but discontinuous limits".
 In applications, sometimes, we need to handle the cases both $\vb_n(\cdot,\xi_n)$ and its average
 are discontinuous functions and/or set-valued mappings. We refer to such a case as ``stochastic approximation with discontinuous dynamics and discontinuous limits".

Many systems and practical problems
 require  optimizing non-smooth functions such as dimension reduction problem in high-dimensional statistics (the $L^1$-norm regularized term) \cite{G15}, support vector machines classification (the hinge loss function), neural networks (the rectified linear unit), collaborative filtering and recommender systems (various types of matrix regularizers) \cite{KW19}, complementarity problems \cite{Che12}, compressed modes in physics, the partial consensus problem \cite{HH19}, among others.
Because the objective functions are not continuously differentiable, the gradient-based methods are often replaced by subgradient-based counterparts.
As a result, discontinuous dynamics and set-valued mappings are ubiquitous in the optimization problems.
There are also numerous problems and algorithms in control engineering, economics, and operations research that require the treatment of
 discontinuous dynamics and/or set-valued mappings; see
learning algorithms in Markov decision processes \cite{PL12},
 algorithms in  approachability theory and the study of fictitious play in game theory \cite{BHS05,BHS12}, etc.

To proceed, let us consider an important class of algorithms, namely, adaptive filtering arising from signal processing and control engineering among others.
 Adaptive filtering problems
can be described as follows. Let $\varphi_n\in \rr^d$ and $y_n\in \rr$ be
measured output and reference signals, respectively.
Assuming
that the sequence $\{(y_n, \varphi_n)\}$ is stationary, we adjust a system
parameter $\theta\in \rr^d$ adaptively so that the weighted output $\theta^\top \varphi_n$ best
matches the reference signal $y_n$ in the sense that a cost function
is minimized. This class of algorithms is important; it has drawn a considerable attention in the literature of both probability and engineering; see \cite{BMP90,EM,KY03,WS85} and numerous references therein.
If the cost function $L(\theta)$ is ``mean square deviation", i.e.,
$L(\theta)= \E| y_n - \theta^\top \varphi_n|^2$, then the algorithm is given by
\begin{equation*}
	\theta_{n+1}= \theta_n + a_n \varphi_n (y_n - \varphi_n^\top \theta_n).\end{equation*}
If the cost function is  $L(\theta)= \E| y_n - \theta^\top \varphi_n|$, then the algorithm is given as
\begin{equation}\label{eq-adf-1}
\theta_{n+1}=\theta_n+a_n\varphi_n\text{sign}(y_n-\varphi^\top_n \theta_n),
\end{equation}
where $\text{sign}(y)=\1_{\{y>0\}}-\1_{\{y<0\}}$ is the sign function (see \cite{YKC03}).
The objective function in \eqref{eq-adf-1} is  non-smooth and the dynamic system in  the algorithm is  discontinuous. Although the asymptotic behavior of the algorithm can be studied as in \cite{YKC03}, using the results and the techniques of this paper, we can relax the conditions used,  characterize the limit dynamical system as a differential inclusion of the form
$$\dot \theta (t)\in  \mathcal K[\mathrm{sign}]\Big(\int \big(y - \varphi^\top\theta(t) \big) \nu(dy\times d \varphi)\Big),$$
 where $\mathcal K[\mathrm{sign}](\cdot)$ is the Krasovskii operator of the $\mathrm{sign}$ function (see Appendix \ref{sec:ode}) and
 $\nu(\cdot)$ is the
 distribution of the sequence $\{(y_n,\varphi_n)\}$.
 Moreover, the rate of convergence of the algorithms can be also obtained using stochastic differential inclusions.
In addition to the above sign-error algorithms, one can also study sign-regressor and sign-sign algorithms, all of which contain discontinuity.
%

From another view point, randomness can affect
samplings, mini-batching computations, partial observations, noisy measurements, and many other sources. As was mentioned, various functions involved in applications could possibly be non-smooth or even not continuous.
Thus, it is  necessary to study stochastic approximation algorithms \eqref{eq-int-1} and \eqref{eq-int-2} with both  $\vb_n(\cdot,\cdot)$ and its averages being
discontinuous functions and/or
set-valued mappings.

With the motivations coming from  applications,
this paper formulates the problem by using
a general and unified setting, introduces new techniques, proves
convergence under
mild conditions, and establishes rates of convergence of stochastic approximations with possibly discontinuous dynamics and discontinuous limits.
Both constrained, unconstrained, and biased algorithms are considered.
To be more specific,
using appropriate piecewise linear and piecewise constant interpolations,
 we prove the boundedness and equicontinuity
  of the sequences in a functional space.
The compactness  enables us
to extract a convergent subsequence.
Most existing works in the literature
 use continuity for either the dynamics or the limit systems.
 If the dynamics are not continuous but the limit systems have enough regularity, Kushner in \cite{K83} used an ``averaging method" to handle this problem under some conditions on the existence of certain Lyapunov functions. In contrast, M\'etivier and Priouret in \cite{MP87} used probabilistic approach by averaging out the noise
 with respect to the invariant measure.
 To analyze algorithms with both the dynamics and limits being discontinuous, we need a new approach.
 In this paper,
we use ordinary differential equations (ODEs) with discontinuous right-hand sides, differential inclusions,  set-valued dynamical systems, and convex analysis to
characterize the
asymptotic behavior of the algorithms.
To obtain the stability,  we use results of stability for differential inclusions
together with novel concepts and techniques from non-smooth analysis.
We also examine
biased stochastic approximation using continuation of chain recurrent sets in set-valued dynamic systems.
In addition, the rates of convergence  are  obtained by using the theory of stochastic differential inclusions and the newly developed theory of variational analysis.

\begin{rem}\label{ref-conv}{\rm Reference Label Convention. Throughout the paper, we  use several sets of assumptions. To facilitate the reading, we shall use the following conventions. Conditions headed by {\bf (A)} corresponds to standard assumptions; conditions headed by {\bf (K)}, {\bf (G)}, and {\bf (P)} are assumptions  involving {\bf Krasovskii} operator, {\bf general} set-valued mapping, and {\bf projection}, respectively;  conditions headed by {\bf (KS)}, {\bf (GS)}, and {\bf (PS)} are stability assumptions corresponding to that of (K), (G), and (P), respectively;  conditions headed by {\bf (R)} are  for  the rates of convergence study.}\end{rem}

To proceed, we  summarize our results as follows.
The first
 convergence results are obtained  in Theorem \ref{mth-1} and Theorem \ref{mth-3}.
The boundedness and equicontinuity of appropriate interpolated sequences
enable
us to extract a convergent subsequence.
By examining the closure of the solutions of differential inclusions, we are able to  characterize the limit systems by differential inclusions.
The asymptotic behavior is then examined by the set of chain recurrent points of the limit differential inclusions; the stability is studied under assumptions on the stability of differential inclusions in the sense of Lyapunov.
Our first convergence theorem
establishes that
the discontinuous components can be averaged out
with the use of
the Krasovskii operator of some vector-valued function.
Next, the Krasovskii operator is replaced by general set-valued mappings.
One of the main difficulties in this case is that we have to obtain some ``nice" properties (the same as that of Krasovskii operator) for set-valued mappings having closed graph, for which we need to use set-valued analysis and convex analysis;
see Proposition \ref{prop63}.

 To continue, we investigate the global stability of the limit differential inclusions, and then establish the
convergence of stochastic approximations to the desired points  by using Assumption \ref{S2} or Assumption \ref{G2} in Theorem \ref{mth-2} and Theorem \ref{mth-4}.
These conditions are similar to that
of the existence of  Lyapunov functions in the stability of ODEs.
However, because of the absence of smoothness conditions, some quantities need to be redefined, for example, $\mathcal U$-generalized Lyapunov function is used instead of Lyapunov function.
In contrast to the ODEs, studying the asymptotic stability of differential inclusions appears to be more difficult.
 With the help from non-smooth analysis and novel results of stability of differential inclusions,
our approach is shown to be
more effective than existing results in the literature; see Section \ref{sec:app4}.

Next, projection algorithms  are examined in Theorem \ref{mth-5}, in which, the projection space $H$ is assumed to be compact and convex (Assumption \ref{H1}). Assumption \ref{H2} provides sufficient conditions for globally asymptotic stability for algorithms with projections.
The
results in this case have similarity to that of the unconstrained algorithms.

To continue, we study biased stochastic approximation, and demonstrate that
the convergence (to $0$) in Assumption \ref{A}(v), and/or Assumption \ref{A}(iii) and Assumption \ref{A}(iv), can be replaced by a neighborhood (of $0$) with radius $\eta$.
Such an idea also stems from the so-called worst case analysis or robustness in handling systems arising in control theory.
We prove that the distance of the sequence of iterates and the set of chain recurrent points of the limit differential inclusions is bounded above by a function $\phi(\eta)$ of $\eta$ satisfying that $\phi(\eta)\to 0$ as $\eta\to0$.
The main idea  is to modify, combine, and extend
our methods in characterizing limit for unbiased case and the continuation of chain recurrent set of differential inclusions developed
in \cite{BHS12}.

Under assumptions on regularities of set-valued mappings, the rate of convergence for stochastic approximations is obtained in Theorem \ref{mth-7}.
Since this case is relatively complex, we consider a simple version of the algorithm so as to get the main ideas across without undue notational complexity;
more complex algorithms can be handled.
Again, the main difficulties lies in characterizing the limit. In lieu of stochastic differential equations,
stochastic differential inclusions
and variational analysis [continuity and $T$-differentiability (see Definition \ref{54-def-14})] are used to derive the desired result.

We demonstrate the utility of our results
by examining several
problems
including the multistage decision making models with partial observations in Markov decision process; and stochastic sub-gradient descent algorithms in minimizing non-differentiable loss functions, $L^1$-norm (Lasso) regularized (or penalized) loss minimization in reducing high-dimensional statistics, robust regression, and  Pegosos algorithm in support vector machine (SVM) classification in machine learning.
We also demonstrate that
 certain convergence results can be obtained by using our results while that cannot be done (or more difficult to obtain) by using the existing results in the literature.
 While the study of Lasso and SVM algorithms may have been around for quite some time,
 the treatment of the nonsmooth and non-continuous cases
and
 the characterization of the limit of the un-scaled and scaled dynamics using differential inclusions and stochastic differential inclusions have not been considered in the past.

\para{Related works and our
contributions.}
To proceed,
we
 highlight our contributions and novelties of the paper in contrast to the existing literature.
\begin{itemize}
	\item
	Although the algorithms involving discontinuous dynamics and set-valued mappings were considered in \cite{KC78}, continuity in an appropriate sense of set-valued  mappings was needed. The continuity, however, may  fail in applications.
	Except \cite{BHS05},
there has been
no
general approach in the literature
for studying convergence of stochastic approximation schemes involving set-valued mappings
without continuity.
Although both papers dealing with differential inclusions,
the setup and results of
 the current paper
 are different
  than  that of \cite{BHS05}.
		Using our approach,
it is possible to recover the setting in \cite{BHS05}; see
	 Remark \ref{rem-compare}.
	  Moreover, the limit processes in \cite{BHS05} were shown to be perturbed solutions of
differential inclusions, whereas in the current paper,
we  characterize the limit processes by the solutions (rather than perturbed solutions) of the limit differential inclusions.
Our convergence analysis is done partially by examining the closure of the set of solutions of a family of differential inclusions for general set-valued mappings,  which is a crucial point in the development.
	Both constrained and unconstrained algorithms are also considered in this paper.
	
    \item In addition, to prove the convergence to the equilibrium point, the stability of differential inclusions corresponding to stochastic approximation schemes is carefully investigated
    using a Lyapunov functional method that is novel and not considered in the existing literature of stochastic approximation.
   To be more specific, we use a ${\mathcal U}$-generalized Lyapunov functional.
    Our approaches and results
    appear to be more effective
    and easily applicable (see examples in Section \ref{sec:app}). The idea behind this approach is that one can ignore some ``less important" points, which
    do not affect the stability of the dynamics.

   \item We consider biased stochastic approximation with discontinuous dynamics and set-valued mappings.
       Although biased stochastic approximation counterpart with smooth dynamics was dealt with in \cite{TD17},
       to the best of our knowledge, this is the first time
         biased stochastic approximation in conjunction with set-valued mappings without continuity is treated.

    \item
    In addition, this work provides
     a rate of convergence study with discontinuities and set-valued mappings.
      Stochastic differential inclusions are used
      for the first time to ascertain the rate of convergence of stochastic approximation.

    \item For applications,
    we provide a unified
    framework and new approaches to
   analyze convergence, rates of convergence, robustness for stochastic and non-smooth optimization problems and/or algorithms involving discontinuous dynamics and set-valued mappings.
   The
   applications considered in the paper include
   algorithms in machine learning and  Markov decision process.
  For applications to machine learning algorithms,
  we provide new insights in analyzing these algorithms by characterizing the limit behavior and rates of convergence using the dynamic systems generated from differential inclusions and stochastic differential inclusions.
   In the machine learning literature to date, almost all
   existing analysis is based purely on constructing some kind of ``contraction estimates" in expectation;  it seems that there has been no  unified framework for
	analyzing stochastic approximation algorithms with discontinuous dynamics and set-valued mappings.
   We also
   fill in the gap for studying convergence of algorithms with non-smooth loss functions.
    Treating Markov decision process, we demonstrate how to apply our results for multistage decision making  with partial observations.
\end{itemize}

\para{Outline of the paper.} The rest of paper is
arranged as follows. Section \ref{sec:conv} obtains convergence of stochastic approximation algorithms with the emphasis on  discontinuity and set-valued mappings.
Section \ref{sec:rate} ascertains rates of convergence with the use of stochastic differential inclusion limits.
Section \ref{sec:app} examines a number of
applications together with numerical results.
Section \ref{sec:con} summarizes our findings and provides further remarks.
Finally,
mathematical background in ODEs with discontinuous right-hand sides, differential inclusions,  non-smooth analysis,  set-valued dynamic systems and analysis, and stochastic differential inclusions are summarized in Appendix \ref{sec:pre}.

\section{Convergence}\label{sec:conv}
Denote by  $\R^d$ the  $d$-dimensional Euclidean space with
the usual Euclidean norm $|\cdot|$, and let
 $(\Omega, \F,\{\F_t\}_{t\geq 0},\PP)$ be a complete
 filtered probability space satisfying the usual conditions.
Consider the following general stochastic approximation algorithm
\begin{equation}\label{eq-algo}
\vX_{n+1}=\vX_n+a_n\vb_n(\vX_n,\xi_n)+a_n\vh(\vX_n,\zeta_n)
+a_n\vh_0(\wdt \zeta_n)+a_n\vbe_n,
\end{equation}
and the associated projection
algorithm
\begin{equation}\label{eq-proj}
\begin{cases}
\tilde \vX_{n+1}=\vX_n+a_n\vb_n(\vX_n,\xi_n)
+a_n\vh(\vX_n,\zeta_n)+a_n\vh_0(\wdt \zeta_n)+a_n\vbe_n,\\
\vX_{n+1}=\Pi_{H}(\tilde \vX_{n+1}),
\end{cases}
\end{equation}
where $\Pi_H$ is the projection operator (orthogonal projection into the set $H$),
and $H$ is a subset of $\R^d$.
 The $\{a_n\}$ is a sequence of step sizes
 (a sequence of positive real numbers) satisfying
 $a_n\to 0$ and $\sum_{n=1}^\infty a_n=\infty$.
The sequences
$\{\xi_n\}$, $\{\zeta_n\}$, and $\{\wdt\zeta_n\}$  noise processes that are correlated in time but independent of each other,
and $\{\vbe_n\}$
represents the bias;
see \cite{KC78,KY03}.
In the literature,
 $\vbe_n$ is often formulated as a diminishing bias so that it
tends to 0 w.p.1.
However, there are cases that one has to face
asymptotically non-zero bias in the sense
$\lim_{n\to\infty}\|\vbe_n\|>0$.

Motivated by many applications, the  functions
$\vb_n(\cdot,\cdot)$ are allowed to be discontinuous and belong to
a set-valued mapping, which can be used to represent sub-gradient
of  non-differentiable components in the loss function, whereas $\vh(\cdot,\cdot)$ is a continuous function (in $\vx$) representing the gradient of the smooth parts in the loss function.
The discontinuity of $\vb_n(\cdot,\cdot)$ and/or set-valued
mappings
appear frequently in applications.
Dealing with such functions and mappings is one of the main objectives of this paper.

\para{Notation.}
 Similar to \cite{KC78,KY03}, define $t_0=0$ and for $n\geq 1$, $t_n:=\sum_{i=0}^{n-1}a_i$, $m(t):=\max\{n:t_n\leq t\}$ if $t\geq 0$ and $m(t)=0$ if $t<0$; and  define the piecewise constant interpolation $\bar{\vX}^0(t)$
 and the piecewise linear interpolation $\vX^0(t)$ of $\vX_n$ with interpolation intervals $\{a_n\}$ as
$$\bar{\vX}^0(t):=\vX_n\text{ in }[t_n,t_{n+1}),$$
$$
\vX^0(t_n):=\vX_n\text{ and }
\vX^0(t):=\frac{t_{n+1}-t}{a_n}\vX_n+\frac{t-t_n}{a_n}\vX_{n+1}\text{ in }(t_n,t_{n+1}),
$$
respectively, and
 define the shift sequence $\vX^n(\cdot)$ on $(-\infty,\infty)$ as
$$
\vX^n(t):=
\begin{cases}
\vX^0(t+t_n),\text{ if }t\geq -t_n,\\
\vX_0\text{ if }t\leq -t_n.
\end{cases}
$$
For two sets $S$, $S_1$, and either a set-valued or a vector-valued mapping $F$, and a real number $k$, we define
$
S+S_1:=\{\vec x+\vec y: \vec x\in S,\vec y\in S_1\},
$
and
$
F(S):=\cup_{\vec x\in S}F(\vx),
$
and
$
k S:=\{k \vx:\vx\in S\}.
$
Throughout the paper, $B$ denotes the
unit open ball $B=\{\vx\in\R^d:|\vx|<1\}$  and $\bar {B}$ is its closure; ``\text{co}" is the convex hull and $``\bar{\text{co}}"$ is the
convex closure; $2^{\R^d}$ is the collection of all subsets of $\R^d$.
To analyze the convergence, we present the following standard assumptions first.
[Recall the reference label conventions in Remark \ref{ref-conv}.]

\begin{enumerate}[label=\textbf{(A)}]
\item\label{A}
\begin{itemize}[leftmargin=*,align=left]
\item[]
{\rm (i)}
$\vh(\cdot,\zeta)$ is continuous in $\vx$, uniformly in $\zeta$ on bounded sets of $\vx$.

\item[]{\rm(ii)}
Either
$\vh(\cdot,\cdot)$ is a bounded measurable function
or
there are non-negative measurable functions $ g_1(\cdot)$ of $\vx$,
and $g_2(\cdot)$ and $g_3(\cdot)$ of $\zeta$ such that $g_1(\cdot)$ is bounded on bounded sets (of $\vx$) and
\begin{equation}\label{eq-a21}
|\vh(\vx,\zeta)|\leq g_1(\vx)g_2(\zeta)+g_3(\zeta),
\end{equation}
and for each $\eps>0$,
\begin{equation}\label{eq-a22}
\lim_{\Delta\to 0}\lim_{n\to\infty}\PP\left\{\sup_{j\geq n}\max_{t\leq \Delta}\sum_{i=m(j\Delta)}^{m(j\Delta+t)-1}a_i[g_2(\zeta_i)+g_3(\zeta_i)]\geq \eps\right\}=0.
\end{equation}

\item[]{\rm (iii)}
There exists a continuous function $\bar \vh(\cdot)$ such that for some $T>0$, each $\eps>0$, and each $\vx$,
\begin{equation}\label{a3eq}
\lim_{n\to\infty}\PP\left\{\sup_{j\geq n}\max_{t\leq T}\Big|\sum_{i=m(jT)}^{m(jT+t)-1}a_i\left(\vh(\vx,\zeta_i)-\bar \vh(\vx)\right)\Big|\geq \eps\right\}=0.
\end{equation}

\item[]{\rm(iv)}
The
$\{\xi_n\}$, $\{\zeta_n\}$, $\{\wdt\zeta_n\}$ are
sequences of independent and exogenous
noises,
and
the function $\vh_0(\cdot)$ is measurable such that for some $T>0$ and each $\eps>0$,
\begin{equation}\label{a4eq}
\lim_{n\to\infty}\PP\left\{\sup_{j\geq n}\max_{t\leq T}\Big|\sum_{i=m(jT)}^{m(jT+t)-1}a_i\vh_0(\wdt \zeta_i)\Big|
\geq \eps\right\}=0.
\end{equation}
By exogenous noises, we mean that  the distribution of $\{\xi_i, i>n\}$ conditioned on $\{\xi_i, \vX_n: i\leq n\}$ is
the same as that of $\{\xi_i, i>n\}$
conditioned on $\{\xi_i: i\leq n\}$ and similar assumptions for $\zeta_n$ and $\wdt\zeta_n$.

\item[]{\rm(v)}
The $\{\vbe_n\}$ is a  sequence of
bounded random variables satisfying $|\vbe_n|\to 0$ w.p.1.
\end{itemize}
\end{enumerate}

\begin{rem}\label{rem-00}
Assumption \ref{A} together with
 the boundedness of the iterates $\{\vX_n\}$
 or a projection algorithm
 (e.g., Assumption \ref{H1} given later)
presents
broad conditions,
which guarantee the boundedness and equicontinuity of $\{\vX^n(\cdot)\}$.
Sufficient conditions
 guaranteeing the boundedness can be provided;
 see \cite[Section 4.7 and Theorem 4.7.4]{KC78} (see also \cite{L77}) or using a projection algorithm \cite{KC78,KY03}.
To handle non-exogenous noise, the reader can consult
\cite[Section 6.6]{KY03} for the treatment of state-dependent noise.
In this paper, for simplicity, we will not deal with such cases.
The noise processes $\{\xi_n\}$, $\{\zeta_n\}$, $\{\wdt\zeta_n\}$ take values in some measurable spaces. However, due to we do not assume any regularity of functions $\vb$, $\vh$, $\vh_0$ on these variables, we often do not specify these spaces.
Moreover, one can combine $\vec h_0(\wdt\zeta_n)$ and $\vbe_n$ in mathematically treating, however, due to their motivations in application (one presents the noise and the other presents the bias), we still keep these two different terms in the setting.
Assumption \ref{A}(v) (as well as \eqref{eq-a22}, \eqref{a3eq}, \eqref{a4eq}) can be relaxed, which will be considered later.
\end{rem}

%
%

\para{Convergence.}
Now, we state our main convergence results; some preliminary results and
 concepts are relegated to Appendix \ref{sec:pre}.
  We use $\mathcal C^d(-\infty,\infty)$ to denote the space of $\R^d$-valued continuous functions defined on $(-\infty,\infty)$, and $D(-\infty,\infty)$  and $D[0,\infty)$ to denote the spaces
 of
 real-valued
 functions defined on $(-\infty,\infty)$ and $[0,\infty)$, respectively, which are
 right continuous and have left limits,  endowed with the Skorohod topology. We use $D^d(-\infty,\infty)$ (resp., $D^d[0,\infty)$) to denote the corresponding $D$ spaces taking values in $\R^d$.
 The convergence of sequence of functions in $\mathcal C^d(-\infty,\infty)$ or $D^d(-\infty,\infty)$ (resp., $D^d[0,\infty)$) is in the sense of weak topology (uniform convergence on bounded intervals).

	As was mentioned, the  functions
	$\vb_n(\cdot,\cdot)$ are possibly discontinuous and belong to
	some set-valued mapping so that they can be used to represent sub-gradients
	of non-differentiable components in the loss function.
	To illustrate,
we first consider the case that this set-valued mapping can be expressed as the Krasovskii operator of some vector-valued function. [For example, sub-gradient of $|\cdot|$ can be expressed as the Krasovskii operator of the $\text{sign}(\cdot)$ function.]
In fact, we allow perturbations of this set-valued mapping, which
is presented in Assumption \ref{S1}.

\begin{enumerate}[label=\textbf{(K)}]
	\item\label{S1} There are a locally bounded function $\bar {\vb}(\cdot)$
	and a sequence of (positive real-valued) continuous (in $\vx$, uniformly in $\xi$)
	functions $\{m_n(\vx,\xi)\}$
	such that
	$\forall n$, $\vx$, $\xi$,
	$$
	\vb_n(\vx,\xi)\in \mathcal K[\bar\vb](\vx)+m_n(\vx,\xi)\bar B,
	$$
	and for some $T>0$, each $\eps>0$, and each $\vx$,
	\begin{equation*}
	\lim_{n\to\infty}\PP\left\{\sup_{j\geq n}\max_{t\leq T}\Big|\sum_{i=m(jT)}^{m(jT+t)-1}a_im_i(\vx,\xi_i)\Big|\geq \eps\right\}=0.
	\end{equation*}
	
	In the above, $\mathcal K[\bar \vb]$ is the Krasovskii operator of $\bar \vb$, i.e., $\mathcal K[\bar\vb]:\R^d\to 2^{\R^d}$ is defined by
	$$
	\mathcal K[\bar {\vb}](\vy):=\cap_{\delta>0}\co\;\bar {\vb}(B(\vy,\delta)),
	$$
	where
	$B(\vy,\delta)$ is the open ball in $\R^d$ with center $\vy$ and radius $\delta$.
	More details on the
	Krasovskii operator and related results are provided in Section \ref{sec:ode}.
\end{enumerate}

\begin{thm}\label{mth-1}
Consider
algorithm \eqref{eq-algo}.  Assume that {\rm\ref{A}} 
and {\rm\ref{S1}} hold and that $\{\vX_n\}$ is bounded w.p.1.
\begin{itemize}
\item
Then there is a null set $\Omega_0$ such that $\forall \omega\notin\Omega_0$, $\{\vX^n(\cdot)\}$ is bounded and equicontinuous on bounded intervals.

\item
Let $\vX(\cdot)$ be the limit of a convergent subsequence of $\{\vX^n(\cdot)\}$.
 Then
$\vX(t)$ is a Krasovskii solution of
\begin{equation}\label{eq-bh}
\dot \vX(t)=\bar {\vb}(\vX(t))+\bar \vh(\vX(t)),
\end{equation}
that is, $\vX(\cdot)$ is a solution of the differential inclusion $($see Section {\rm\ref{sec:ode}} for detailed definitions$)$
\begin{equation}\label{eq-kbh}
\dot \vX(t)\in \mathcal K[\bar {\vb}+\bar \vh](\vX(t)).
\end{equation}
\item The limit set of $\vX(\cdot)$ is internally chain transitive $($with respect to \eqref{eq-kbh}$)$
and
the limit points of $\{\vX_n\}$ are contained in $\mathcal R$, the set of chain-recurrent points
of \eqref{eq-kbh} $($see Appendix \ref{sec:dyn} for the definitions$)$.
\item Moreover, let $\Lambda$ be a locally asymptotically stable set $($in the sense of Lyapunov$)$ of all Krasovskii solutions of \eqref{eq-bh} and  $DA(\Lambda)$ be its domain of attraction.
If
$\{\vX_n\}$ visits the compact subset of $DA(\Lambda)$ infinitely often  with probability 1 $($resp., with probability $\geq \rho)$,
then $\vX_n\to \Lambda$ when $n\to\infty$ with probability 1 $($resp., with probability $\geq \rho)$.
\end{itemize}
\end{thm}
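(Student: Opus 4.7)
The plan is to follow the classical ODE method of Kushner--Yin, but with the key adjustment that the limit system is a differential inclusion whose right-hand side is the Krasovskii hull of a discontinuous drift. First, I fix $\omega$ outside a null set $\Omega_0$ chosen so that $\{\vX_n\}$ is bounded, the tail-probability statements in \ref{A}(ii)--(v) and \ref{S1} all hold (via countable intersection over a sequence of $\varepsilon_k \to 0$ and $\Delta_k \to 0$), and show boundedness plus equicontinuity of $\{\vX^n(\cdot)\}$ on bounded intervals. Boundedness is immediate from the piecewise linear interpolation and the hypothesis that $\{\vX_n\}$ is bounded. For equicontinuity on an interval $[s,t]$, I decompose
\[
\vX^n(t) - \vX^n(s) \approx \sum_{i=m(t_n+s)}^{m(t_n+t)-1} a_i\bigl[\vb_i(\vX_i,\xi_i) + \vh(\vX_i,\zeta_i) + \vh_0(\wdt\zeta_i) + \vbe_i\bigr].
\]
The $\vb_i$-term splits by \ref{S1} into a piece lying in $\mathcal K[\bar\vb](\vX_i)$, which is uniformly bounded on the bounded trajectory since $\bar\vb$ is locally bounded, and a perturbation in $m_i(\vX_i,\xi_i)\bar B$, whose partial sums are small off the null set by the tail condition in \ref{S1} (after reducing to finitely many test points via the equicontinuity of $m_i$ in $\vx$ and a covering argument). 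The $\vh$-term is handled by \ref{A}(ii), the $\vh_0$-term by \ref{A}(iv), and the $\vbe$-term by \ref{A}(v).

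For the second bullet, pass to a subsequence $n_k$ with $\vX^{n_k}(\cdot) \to \vX(\cdot)$ uniformly on bounded intervals and write
\[
\vX^{n_k}(t) = \vX^{n_k}(0) + \int_0^t \vec u^{n_k}(s)\,ds + \int_0^t \vec v^{n_k}(s)\,ds + \vec r^{n_k}(t),
\]
where $\vec u^{n_k}$ is the interpolation of selections of $\mathcal K[\bar\vb](\vX_i)$, $\vec v^{n_k}$ is the interpolation of $\vh(\vX_i,\zeta_i)$, and $\vec r^{n_k}$ collects the perturbation, $\vh_0$, and $\vbe$ pieces, all of which vanish uniformly on bounded intervals by the first step. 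By \ref{A}(iii) together with a standard equicontinuity/localization argument (approximate $\vX(\cdot)$ by a piecewise-constant function on a fine grid and apply \eqref{a3eq} at each grid point), $\int_0^t \vec v^{n_k} \to \int_0^t \bar\vh(\vX(s))\,ds$. The heart of the proof is the first integral: the family $\{\vec u^{n_k}\}$ is uniformly bounded, hence weakly compact in $L^2$, and any weak limit $\vec u(\cdot)$ satisfies $\vec u(s) \in \mathcal K[\bar\vb](\vX(s))$ for a.e.\ $s$ by Mazur's lemma combined with the convexity, compactness, and upper semicontinuity of the Krasovskii operator (this is precisely the closure property of the solution set of $\dot \vY \in \mathcal K[\bar\vb](\vY)$, cf.\ Section~\ref{sec:ode}). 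Passing to the limit yields $\dot\vX(t) \in \mathcal K[\bar\vb](\vX(t)) + \bar\vh(\vX(t))$; since $\bar\vh$ is continuous, adding it to $\mathcal K[\bar\vb]$ preserves the Krasovskii-hull structure, giving \eqref{eq-kbh}.

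For the third bullet, once every convergent subsequence limit is a solution of \eqref{eq-kbh}, a direct application of Bena\"{\i}m--Hofbauer--Sorin--style theory for set-valued flows (Appendix~\ref{sec:dyn}) shows that the cluster set of $\{\vX_n\}$ coincides with the $\omega$-limit set of $\vX^0(\cdot)$, is compact and invariant under the set-valued flow of \eqref{eq-kbh}, and is internally chain-transitive; in particular it is contained in the chain-recurrent set $\mathcal R$. For the final bullet, on the event that $\{\vX_n\}$ visits a compact $K \subset DA(\Lambda)$ infinitely often, pick $n_k$ with $\vX_{n_k} \in K$ and extract, via the equicontinuity established above, a further subsequence along which $\vX^{n_k}(\cdot) \to \vX(\cdot)$, a Krasovskii solution with $\vX(0) \in K$. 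Local asymptotic stability of $\Lambda$ gives $\vX(t) \to \Lambda$ as $t \to \infty$, and Lyapunov stability gives a trapping neighborhood: once the shifted trajectory $\vX^{n_k}(\cdot)$ enters a sufficiently small neighborhood of $\Lambda$, no later iterate can escape the basin of attraction, so $\vX_n \to \Lambda$ on this event, transferring the probability hypothesis directly to the conclusion.

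The main obstacle will be the closure/limiting argument in the second bullet, specifically upgrading the fixed-$\vx$ tail condition in \ref{S1} to control of $\sum a_i m_i(\vX_i,\xi_i)$ evaluated at the moving iterates, and then identifying the weak $L^2$ limit of the selections $\vec u^{n_k}$ as a measurable selection of $\mathcal K[\bar\vb](\vX(\cdot))$. The first requires a uniform-in-$\vx$ version via equicontinuity in $\vx$ of $m_i$ and a finite cover of the range of $\vX(\cdot)$; the second leans essentially on the convex and upper semicontinuous structure of the Krasovskii operator together with Mazur's lemma.
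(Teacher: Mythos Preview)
Your proposal is correct, and Parts 1, 3, 4 track the paper closely. The genuine difference is in the second bullet. You identify the limit of the discontinuous drift via weak $L^2$ compactness of the bounded selections $\vec u^{n_k}$ together with Mazur's lemma and upper semicontinuity of $\mathcal K[\bar\vb]$. The paper does \emph{not} go through weak compactness at all: it keeps the argument at the level of set-valued integrals. Writing $\vb_n(\bar{\vX}^0(t_n+r),\bar\xi^0(t_n+r))\in \mathcal K[\bar\vb](\vX(r)+\vec p_n(r))+m_n\bar B\subset \co\,\bar\vb(\vX(r)+\eps B)+\delta\bar B$ for $n$ large, it integrates this inclusion and invokes a lemma of H\'ajek (Proposition~\ref{lem-41}) stating that the Aumann integral of a bounded compact-convex-valued map is compact and convex, so one can pass to the limit in $n$ inside the inclusion; part (b) of the same proposition then gives $\dot\vX(t)\in \co\,\bar\vb(\vX(t)+\eps B)+\bar\vh(\vX(t))$ for a.e.\ $t$, and the final step is simply to intersect over $\eps>0$, which is \emph{literally} the defining formula for $\mathcal K[\bar\vb](\vX(t))$. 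The paper's route is thus more elementary (no functional-analytic machinery beyond compactness of set-valued integrals) and makes the Krasovskii structure do the work explicitly. Your route, on the other hand, is the standard convergence-of-solutions argument from differential-inclusions theory and has the advantage that it applies verbatim to any upper-semicontinuous compact-convex-valued $G$; the paper, by contrast, must prove a separate auxiliary result (Proposition~\ref{prop63}, that $\cap_{\eps>0}\co\,G(\vx+\eps B)=G(\vx)$ for closed-graph maps) before the same $\eps$-intersection trick extends to Theorem~\ref{mth-3}.
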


\begin{proof}[Proof of Theorem \ref{mth-1}]
To help the reading, we divide the proof into four parts.

{\bf Part 1: Boundedness and Equi-continuity.}
The argument in proving  $\{\vX^n(\cdot)\}$ being bounded and equicontinuous is similar to \cite[Proof of Theorem 2.4.1 and Theorem 2.4.2]{KC78} or \cite[Proof of Theorem 6.1.1]{KY03}.
Hence, we only outline the main points and highlight the differences.
Let $\mathcal H_0$ be a countable dense subset of $\R^d$
 and $\Omega_0$ be the null set that contains all paths, in which $\{\vX_n\}$ is unbounded and the exceptional sets in Assumption {\rm \ref{A}(ii)-(v)},
 {\rm\ref{S1}},
 union over $\mathcal H_0$. In the assumptions, the null or exceptional sets
  are the sets,
   in which the boundedness or convergence does not hold.
   For example, the exceptional set (at $\vx$) in {\rm \ref{A}(iii)} is the set of all $\omega$, in which
$$
\limsup_{n\to\infty}\max_{t\leq T}\Big|\sum_{i=m(nT)}^{m(nT+t)-1}a_i\left(\vh(\vx,\zeta_i)-\bar \vh(\vx)\right)\Big|\neq 0.
$$
We refer to \cite[Proof of Lemma 2.2.1]{KC78} for the proof of the above exceptional sets being null sets.
Since $\mathcal H_0$ is countable, $\Omega_0$ is still a null set.
Now, we  work with a fixed $\omega\notin\Omega_0$.
We write $\vX^n(\cdot)$ as
\begin{equation}\label{eq-4.00}
\vX^n(t)
=\vX_n+\!\int_0^t \vb_n(\bar{\vX}^0(t_n+s),\bar \xi^0(t_n+s))ds+\!\int_0^t \vh(\bar{\vX}^0(t_n+s),\bar \xi^0(t_n+s))ds
+\vga^n(t)+\vps^n(t),
\end{equation}
if $t\geq -t_n$,  otherwise $\vX^n(t)=\vX_0$,
where
$\vga^n(t)$ and $\vps^n(t)$ are the piecewise linear interpolations of $\sum_{i=0}^{n-1}a_i\vbe_i$ and $\sum_{i=0}^{n-1}a_i\vh_0(\xi_i)$, respectively. That is,
$$
\begin{aligned}
&\vga^0(t_n)=\sum_{i=0}^{n-1} a_i\vbe_i\;;\;\vga^0(t)=\frac{t_{n+1}-t}{a_n}\vga^0(t_n)+
\frac{t-t_n}{a_n}\vga^0(t_{n+1})\text{ for } \ t\in (t_n,t_{n+1}),\\
&\vga^n(t)=\begin{cases}\vga^0(t+t_n)-\vga^0(t_n) \ \text{ if }t\geq -t_n,\\
-\vga^0(t_n)\ \text{ if }t\leq -t_n,
\end{cases}\\
&\vps^0(t_n)=\sum_{i=0}^{n-1} a_i\vh_0(\xi_i)\;;\;\vps^0(t)=\frac{t_{n+1}-t}{a_n}\vps^0(t_n)
+\frac{t-t_n}{a_n}\vps^0(t_{n+1})\text{ for } \ t\in(t_n,t_{n+1}),\\
&\vps^n(t)=\begin{cases}\vps^0(t+t_n)-\vps^0(t_n) \ \text{ if }t\geq -t_n,\\
-\vps^0(t_n)\ \text{ if }t\leq -t_n;
\end{cases}
\end{aligned}
$$
and $\bar\vbe^0(\cdot)$ and $\bar \xi^0(\cdot)$ are the piecewise constant interpolations of $\{\vbe_n\}$  and $\{\xi_n\}$, i.e.,
$
\bar\vbe^0(t)=\vbe_n$ and $\bar\xi^0(t)=\xi_n$  for $ t\in [t_n,t_{n+1})
$.
 Note that we have three different sequences of noise processes, $\{\xi_n\}$, $\{\zeta_n\}$, and $\{\wdt\zeta_n\}$, but we write them as $\{\xi_n\}$ (and $\bar\xi^0(t)$ for the interpolations) to simplify the notation.
For simplicity again, we will always write the algorithm as \eqref{eq-4.00}, whether $t\geq -t_n$ or $t\leq -t_n$ with the understanding that $\vX^n(t)=\vX_0$ if $t\leq -t_n$.

First, by {\rm \ref{A}(iv)} and {\rm \ref {A}(v)},
$\{\vga^n(\cdot)$ and $\vps^n(\cdot)\}$ are equicontinuous and bounded,
and any convergent subsequence converges
uniformly to a zero process on bounded intervals (see e.g., \cite[Lemma 2.2.1]{KC78}).
Second,
note that $\{\vb_n(\cdot,\cdot)\}$ is (uniformly in
 the variable $\xi$) bounded due to Assumption {\rm\ref{S1}} and the boundedness of $\{\vX_n\}$.
By {\rm \ref{A}(ii)}, if $\vh(\cdot,\cdot)$ is uniformly bounded, combining with boundedness of $\{\vb_n(\cdot,\cdot)\}$, $\{\vX^n(\cdot)\}$ is equicontinuous.
Otherwise, by \eqref{eq-a21} and \eqref{eq-a22},
 we obtain
$$
\int_t^{t+s}\big|\vh(\bar{\vX}^0(r),\bar\xi^0(r))\big|dr\leq K\int_t^{t+s}g_2(\bar \xi^0(r))dr+\int_t^{t+s}g_3(\bar\xi^0(r))dr,
$$
where $K$ is some finite constant; such a $K$ always exists due to the boundedness of $\{\vX_n\}$ and local boundedness of $g_1(\cdot)$ in {\rm\ref{A}(ii)}.
Thus, by \eqref{eq-a22}, we get that
 $\int_t^{t+s} |\vh(\bar{\vX}^0(r),\bar\xi^0(r))|dr$ is uniformly continuous in $t,s$ in $[0,\infty)$.
Therefore, it is easy to show that $\vX^0(\cdot)$ is uniformly continuous, so $\{\vX^n(\cdot)\}$ is equicontinuous.
As a consequence, we obtain boundedness and equicontinuity
 of $\{\vX^n(\cdot)\}.$

{\bf Part 2: Characterize the limit.}
Take a convergent subsequence of $\{\vX^n(\cdot)\}$ and still denote it by $\{\vX^n(\cdot)\}$ for simplicity of notation and denote its limit by $\vX(\cdot)$.
From the integral form \eqref{eq-4.00}, we have that
\begin{equation}
\begin{aligned}
\vX^n(t)
=\vX_n&+\int_0^t \vb_n(\bar{\vX}^0(t_n+s),\bar \xi^0(t_n+s))ds+\int_0^t \bar \vh(\vX(s))ds\\
&+
\int_0^t \left[\vh(\bar{\vX}^0(t_n+s),\bar \xi^0(t_n+s))-\bar \vh(\vX(s))\right]ds
+\vga^n(t)+\vps^n(t).
\end{aligned}
\end{equation}
Hence, we obtain that
\begin{equation}\label{eq-9.41}
\vQ^n(t)=\vQ^n(0)+\int_0^t \vb_n(\bar{\vX}^0(t_n+s),\bar\xi^0(t_n+s))ds+\int_0^t\bar \vh(\vX(s))ds,
\end{equation}
where
$$
\vQ^n(t):=\vX^n(t)-\vga^n(t)-\vps^n(t)-
\int_0^t\left[\vh(\bar{\vX}^0(t_n+s),\bar \xi^0(t_n+s))-\bar \vh(\vX(s))\right]ds.
$$
Because of Assumption {\rm\ref{S1}}, we get
\begin{equation}\label{eq-9.42}
\begin{aligned}
\vb_n(\bar{\vX}^0(t_n+t),\bar \xi^0(t_n+t))&\in \mathcal K[\bar {\vb}]\big(\bar{\vX}^0(t_n+t)\big)+m_n(\bar{\vX}^0(t_n+t),\bar\xi^0(t_n+t)\bar B\\
&=\mathcal K[\bar {\vb}]\big(\vX(t)+\vec p_n(t)\big)+m_n(\bar{\vX}^0(t_n+t),\bar\xi^0(t_n+t)\bar B,
\end{aligned}
\end{equation}
where
$m_n(\vx,\xi)$ is as in Assumption {\rm\ref{S1}} and
$
\vec p_n(t):=\bar{\vX}^0(t_n+t)-\vX(t).
$

Next, we  prove $\vec p_n(t)$ converges to $\vec 0$ and $\vQ^n(t)$ converges to $\vX(t)$ uniformly on bounded $t$-intervals.
First, 
it is easy to see that
$\bar{\vX}^0(t_n+\cdot)-\vX(\cdot)$ converges to $\vo$ uniformly on bounded intervals,
which leads to that $\{\vec p_n(\cdot)\}$ converges to $\vo$ uniformly on bounded intervals.
Second, by the
continuity
of $\vh(\cdot,\xi)$ in Assumptions \ref{A}(i), and the fact that $\bar{\vX}^0(t_n+\cdot)-\vX(\cdot)$ converges to $\vo$ uniformly on bounded intervals, we obtain
that (see e.g., \cite[Proof of Theorem 2.4.1]{KC78})
\begin{equation}\label{eq-4.0}
\begin{aligned}
\int_0^t\left(\vh(\bar{\vX}^0(t_n+s),\bar \xi^0(t_n+s))-\vh(\vX(s),\bar\xi^0(t_n+s))\right)ds\to \vo\text{ uniformly on bounded intervals}.
\end{aligned}
\end{equation}
On the other hand, we also have that
\begin{equation}\label{eq-4.3}
\lim_{n\to\infty}\int_0^t\left( \vh(\vx,\bar \xi^0(t_n+s))-\bar \vh(\vx)\right)ds=\vo,
\end{equation}
uniformly in $(t,\vx)$ on bounded sets.
In fact, by {\rm\ref{A}(iii)} we first only get  the convergence \eqref{eq-4.3} being uniform on bounded $t$-intervals for $\vx$ being in countable dense set $\mathcal H_0$. However, because of the assumptions on regularity of $\vh(\cdot,\cdot)$ and $\bar \vh(\cdot)$, we obtain the uniform convergence on bounded sets.
Combining \eqref{eq-4.0} and \eqref{eq-4.3} implies that
$$
\int_0^t\left[\vh(\bar{\vX}^0(t_n+s),\bar \xi^0(t_n+s))-\bar \vh(\vX(s))\right]ds\to\vo\text{ uniformly on bounded intervals}.
$$
The uniform convergence to $\vo$ of $\vga^n(\cdot)$ and $\vps^n(\cdot)$ follow from Assumptions {\rm \ref{A}(iv)} and {\rm \ref{A}(v)}.
Hence, $\{\vQ^n(\cdot)\}$ converges to $\vX(\cdot)$ uniformly on bounded intervals.
To proceed, we have the following proposition, whose proof
can be found in \cite[Lemma 4.1 and Lemma 4.2]{H79}.

\begin{prop}\label{lem-41}
We have the following results.
\begin{itemize}
\item[]{\rm (a)} Let $C(t):\R\to 2^{\R^d}$ be a set-valued mapping, whose values are compact, convex, and all contained in a common ball, i.e., there is a finite ball $B_C\subset\R^d$ such that $C(t)\subset B_C$ for all $t$. Then $\int_0^1C(t)dt$ is compact and convex.
\item[]{\rm (b)} Let $S(t):\R\to 2^{\R^d}$ be a set-valued mapping, whose values are all
   contained in a common ball.
If $\vX(\cdot):[0,1]\to\R^d$ satisfies that
$$
\vX(t)-\vX(s)\in \int_s^t S(r)dr,\;\text{for all }s<t\in [0,1],
$$
then $\vX(\cdot)$ is absolutely continuous and satisfies that
$
\dot \vX(t)\in \co \;S(t)$  almost everywhere in $[0,1]$.

\end{itemize}
\end{prop}

Now, let $\eps,\delta>0$ be arbitrary. On bounded intervals, for $n$ large enough,
$
|\vec p_n(\cdot)|<\eps/2$.
Moreover, because of Assumption {\rm \ref{S1}}, the average of the ``radius of neighbor" $m_n(\vx,\xi_n)$ tends to $0$, thus on bounded intervals, for $n$ large enough, we have from \eqref{eq-9.42} that
$$
\int_s^t \vb_n(\bar{\vX}^0(t_n+r),\bar\xi^0(t_n+r))dr\in \int_s^t \mathcal K[\bar {\vb}]\big(\vX(r)+\vec p_n(r)\big)dr+\delta\bar B.
$$
Hence, for all
$t,s$ in bounded intervals, for $n$ large enough, one obtains from \eqref{eq-9.41} and \eqref{eq-9.42} that
$$
\vQ^n(t)-\vQ^n(s)\in \int_s^t\bar \vh(\vX(r))dr+\int_s^t\co\;\big(\bar {\vb}(\vX(r)+\eps B)\big)dr+\delta\bar B.
$$
By part (a) of Proposition \ref{lem-41}, letting $n\to\infty$, we obtain that
$$
\vX(t)-\vX(s)\in\int_s^t \bar \vh(\vX(r))dr+\int_s^t \co\;\big(\bar {\vb}(\vX(r)+\eps B)\big)dr+\delta\bar B.
$$
Letting $\delta\to 0$ combined with part (b) implies that $\vX(t)$ is absolutely continuous and
for almost $t$ in bounded intervals,
$$
\dot \vX(t)\in \co\;\big(\bar {\vb}(\vX(t)+\eps B)\big)+\bar \vh(\vX(t)),\;\forall \eps>0.
$$
Taking $\eps \to 0$, we obtain that for almost $t$ in bounded intervals
$$
\dot \vX(t)\in \cap_{\eps>0}\co\;\big(\bar {\vb}(\vX(t)+\eps B)\big)+\bar \vh(\vX(t))=\mathcal K[\bar {\vb}](\vX(t))+\bar \vh(\vX(t)).
$$
Hence, combined with Lemma \ref{lem-K},
we obtain that $\vX(t)$ satisfies the differential inclusion
$$
\dot \vX(t)\in \mathcal K[\bar {\vb}+\bar \vh](\vX(t)).
$$

{\bf Part 3: Stability.}
The proof of the limit set of $\vX(\cdot)$ being internally-chain transitive can be found in \cite[Theorem 3.6]{BHS05}.
Hence, the limit points of $\{\vX_n\}$ are contained in $\mathcal R$, the set of chain-recurrent points.
Since we still use the definition of
stability in the sense of Lyapunov, the argument for obtaining stability  is the same as that of \cite[Proof of Theorem 2.3.1]{KC78} or \cite[Proof of Theorem 5.2.1]{KY03}.
We will study
the stability (in the sense of Lyapunov) for differential inclusions later.
\end{proof}

Theorem \ref{mth-1} can be generalized
when we replace the Krasovskii operator by arbitrary set-valued mappings. We proceed with the conditions needed and the assertions.

\begin{enumerate}[label=\textbf{(G)}]
	\item\label{G1} There is a set-valued mapping $G:\R^d\to 2^{\R^d}$
	satisfying:
	\begin{itemize}
		\item[]{\rm (i)} $G(\cdot)$ has non-empty, compact, convex values, and all values are contained in a finite common ball, i.e., there
		is a finite ball $B_G\subset\R^d$ such that $G(\vx)\subset B_G$ for all $\vx$;
		\item[]{\rm (ii)}
		$G$ has a closed graph, i.e.,
		$\text{Graph}(G):=\{(\vx,\vy): \vy\in G(\vx)\},$
		is a closed subset of $\R^d\times\R^d$;
		\item[]{\rm (iii)}
		there is a sequence of (positive real-valued) continuous (in $\vx$, uniformly in $\xi$) functions $\{m_n(\vx,\xi)\}$ such that
		for all $n$, $\vx$, $\xi$,
		$$
		\vb_n(\vx,\xi)\in G(\vx)+m_n(\vx,\xi)\bar B,
		$$
		and that for some $T>0$, each $\eps>0$, and each $\vx$,
		\begin{equation*}
		\lim_{n\to\infty}\PP\left\{\sup_{j\geq n}\max_{t\leq T}\Big|\sum_{i=m(jT)}^{m(jT+t)-1}a_im_i(\vx,\xi_i)\Big|\geq \eps\right\}=0.
		\end{equation*}
		
	\end{itemize}
\end{enumerate}

\begin{thm}\label{mth-3}
If we replace Assumption {\rm\ref{S1}} by {\rm\ref{G1}} in Theorem \ref{mth-1}, then the conclusions in Theorem \ref{mth-1} continue to hold with
the limit differential inclusion \eqref{eq-kbh}  replaced by
\begin{equation}\label{eq-hg}
\dot \vX(t) \in \bar \vh(\vX(t))+G(\vX(t)).
\end{equation}
\end{thm}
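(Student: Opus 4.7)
The plan is to carry over the four-part argument of Theorem~\ref{mth-1}, with modifications concentrated in Part~2, which is the only place that exploited the specific algebraic form of the Krasovskii operator. Parts~1, 3, and 4 require no essential change beyond replacing $\mathcal K[\bar\vb+\bar\vh]$ by $\bar\vh+G$. In Part~1, the uniform boundedness of $\{\vb_n(\cdot,\cdot)\}$ needed for equicontinuity now follows from \ref{G1}(i), which gives $G(\vx)\subset B_G$, together with the vanishing averaged increment of $m_n$ from \ref{G1}(iii); combined with \ref{A}(ii), (iv), (v), the previous argument shows that $\{\vX^n(\cdot)\}$ is bounded and equicontinuous off a null set. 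Parts~3 and 4 rely only on the abstract theory of differential inclusions, internally chain-transitive sets (\cite[Theorem~3.6]{BHS05}) and the Lyapunov-function machinery, and apply once the limit is identified as a solution of \eqref{eq-hg}.

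For Part~2, let $\vX(\cdot)$ be the limit of a convergent subsequence of $\{\vX^n(\cdot)\}$ and set $\vec p_n(r):=\bar{\vX}^0(t_n+r)-\vX(r)$; as in the proof of Theorem~\ref{mth-1}, $\vec p_n(\cdot)\to\vo$ and $\vQ^n(\cdot)\to\vX(\cdot)$ uniformly on bounded intervals. By \ref{G1}(iii),
$$\vb_n\bigl(\bar{\vX}^0(t_n+r),\bar\xi^0(t_n+r)\bigr)\in G\bigl(\bar{\vX}^0(t_n+r)\bigr)+m_n\bigl(\bar{\vX}^0(t_n+r),\bar\xi^0(t_n+r)\bigr)\bar B,$$
and the averaged contribution of $m_n\bar B$ on bounded intervals vanishes as $n\to\infty$. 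Since $|\vec p_n(r)|<\eps$ for all large $n$, for any $\eps,\delta>0$ and any bounded interval,
$$\int_s^t \vb_n\bigl(\bar{\vX}^0(t_n+r),\bar\xi^0(t_n+r)\bigr)\,dr\in\int_s^t\co\, G\bigl(\vX(r)+\eps B\bigr)\,dr+\delta\bar B.$$
Substituting into the analogue of \eqref{eq-9.41} and using Proposition~\ref{lem-41}(a) to pass $n\to\infty$ via $\vQ^n\to\vX$, then letting $\delta\to 0$ and applying Proposition~\ref{lem-41}(b), I obtain, for each $\eps>0$,
$$\dot{\vX}(t)\in\bar\vh(\vX(t))+\co\, G\bigl(B(\vX(t),\eps)\bigr)\quad\text{a.e.,}$$
and hence $\dot{\vX}(t)\in\bar\vh(\vX(t))+\bigcap_{\eps>0}\co\, G(B(\vX(t),\eps))$ a.e.

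The main obstacle---and the only genuinely new ingredient compared with Theorem~\ref{mth-1}---is the identity
$$\bigcap_{\eps>0}\co\, G\bigl(B(\vX(t),\eps)\bigr)=G(\vX(t)),$$
which for the Krasovskii operator held by definition but here must be extracted from \ref{G1}; this is precisely the content of Proposition~\ref{prop63}. The inclusion $\supset$ is immediate since $G(\vX(t))$ is closed and convex. For $\subset$, given $\vz$ in the left side, a diagonal extraction combined with Carath\'eodory's theorem represents $\vz$ (up to arbitrarily small error) as $\sum_{i=0}^d\lambda_i^k\vz_i^k$ with $\vz_i^k\in G(\vy_i^k)$, $\vy_i^k\to\vX(t)$, and $\sum_i\lambda_i^k=1$; compactness of the simplex and the uniform bound $G(\cdot)\subset B_G$ furnish a convergent subsequence $(\lambda_i^k,\vy_i^k,\vz_i^k)\to(\lambda_i,\vX(t),\vz_i)$, and the closed-graph property \ref{G1}(ii) forces $\vz_i\in G(\vX(t))$, so by convexity $\vz=\sum_i\lambda_i\vz_i\in G(\vX(t))$. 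Combined with the previous display, this gives $\dot{\vX}(t)\in\bar\vh(\vX(t))+G(\vX(t))$ almost everywhere, i.e.\ \eqref{eq-hg}, and the remaining assertions follow as in Theorem~\ref{mth-1}.
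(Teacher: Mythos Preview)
Your proposal is correct and follows essentially the same route as the paper: you correctly identify that only Part~2 requires new work, reduce the characterization of the limit to the identity $\bigcap_{\eps>0}\co\,G(\vx+\eps B)=G(\vx)$, and prove this via Carath\'eodory's theorem, compactness, the closed-graph hypothesis, and convexity of $G(\vx)$---exactly the content and method of the paper's Proposition~\ref{prop63}. One small refinement: the paper first observes (via upper semicontinuity of $G$ and \cite[Chapter~1, Proposition~3]{AC84}) that $\co\,G(\vx+\eps\bar B)=\text{co}\,G(\vx+\eps\bar B)$, so the Carath\'eodory representation of any $\vz$ in the intersection is exact rather than ``up to arbitrarily small error''; this spares you the diagonal extraction and makes the passage to the limit slightly cleaner.
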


\begin{proof}[Proof of Theorem \ref{mth-3}.]
To prove Theorem \ref{mth-3}, we need to generalize the results on closure of the set of Krasovskii solutions
for the set of solutions of the classes of differential inclusions that satisfy a ``nice" property (property \ref{cdF}) like the Krasovskii operator. Then we will prove this property holds for the set-valued mappings in our setting (having compact, convex values, contained in a finite common ball and having close graph).
The results are shown in the following two propositions.

\begin{prop}\label{prop-3.2-F}
Let $\vX_k(\cdot)$ be satisfied the following for all $t,s$ in $[0,1]$
$$
\vX_k(t)-\vX_k(s)\in \int_s^t \Big(F(\vX_k(r)+\vec p_k(r,\vX_k(r)))+\vec q_k(r,\vX_k(r))\Big)dr,
$$
for some sequences of functions $\{\vec p_k(\cdot)\}$ and $\{\vec q_k(\cdot)\}$ satisfying
$
\vec p_k\to\vec 0$ $\vec q_k\to\vec 0$ uniformly $($in $[0,1]$$)$.
Assume that $F:\R^d\to2^{\R^d}$ is a set-valued mapping, whose values are non-empty, compact, convex, and  in a common ball, and that
\begin{equation}\label{cdF}
\cap_{\eps>0}\co\;F(\vx+\eps B)= F(\vx),\;\forall \vx.
\end{equation}
If $\vX_k(\cdot)$ converges $($uniformly$)$ to $\vX(\cdot)$, then the limit $\vX(\cdot)$ is a solution of the following differential inclusion
$$
\dot \vX(t)\in F(\vX(t)).
$$
\end{prop}

\begin{proof}
For arbitrary $\eps, \delta>0$, there is a large number $N$ such that $\forall n\geq N$,
$$
|\vec p_n(\cdot)|<\eps,\quad|\vec q_n(\cdot)|<\delta,\quad |\vX_n(\cdot)-\vX(\cdot)|<\eps.
$$
Hence, we have
$$
\vX_k(t)-\vX_k(s)\in \int_s^t\co\;\big(F(\vX_k(r)+\eps B)+\delta B\big)dr\in \int_s^t\co\;\big(F(\vX(r)+2\eps B)+\delta B\big)dr.
$$
Letting $k\to\infty$, it follows from
Proposition \ref{lem-41} that $\vX(t)$ is absolutely continuous and for almost all $t$
$$
\dot \vX(t)\in \co\;\big(F(\vX(t)+2\eps B)+\delta B\big).
$$
Taking $\delta\to0$, we obtain
$
\dot \vX(t)\in \co\;F(\vX(t)+2\eps B)\;\forall \eps>0.
$
As a consequence,
$
\dot \vX(t)\in \cap_{\eps>0}\co\;F(\vX(t)+2\eps B).
$
Using \eqref{cdF}, we complete the proof.
\end{proof}

\begin{prop}\label{prop63}
Assume $F:\R^d\to 2^{\R^d}$ is a set-valued mapping, whose values are non-empty, convex, compact subsets, and contained in a finite common ball, and whose graph is closed.
Then, one has
\begin{equation*}
\cap_{\eps>0}\co\;F(\vx+\eps B)= F(\vx),\;\forall \vx.
\end{equation*}
\end{prop}

\begin{proof}
Let $\vx$ be fixed but otherwise arbitrary.
By Lemma \ref{lem5.2}, $F$ is upper semicontinuous. Hence, by \cite[Proposition 3, Chapter 1]{AC84}, we have
$
\co\;F(\vx+\eps B)\subset \co\;F(\vx+\eps \bar {B})=\text{co}\;F(\vx+\eps \bar {B}).
$
Therefore,
$
\cap_{\eps>0}\co\;F(\vx+\eps B)\subset \cap_{\eps>0}\text{co}\;F(\vx+\eps \bar {B}).
$
On the other hand, by \cite[Theorem 5.7]{RW97},  from closed graph property of $F$, we obtain that
\begin{equation}\label{10-9-1}
\{\vec u: \text{there exist }\vx_n\to \vx\text{ and } \vy_n\in F(\vx_n)\text{ such that }\vy_n\to \vec u\}\subset F(\vx).
\end{equation}

Now, let $\vec u\in \cap_{\eps>0}\co\;F(\vx+\eps B)$. Then $\vec u\in \cap_{\eps>0}\text{co}\;F(\vx+\eps \bar {B})$. As a consequence,
$\vec u\in\text{co}\;F(\vx+\frac 1n\bar {B})$  for all $n\in \N.$
By Carath\'eodory's theorem for convex hulls of sets in a Euclidean space \cite[Theorem 2.29]{RW97}, for each $n$, there are $d+1$ points $\vy_n^0,\dots,\vy_n^d$ and $d+1$ points $\vx_n^0,\dots,\vx_n^d$, $|\vx-\vx_n^i|\leq \frac 1n,\;\forall i=0,\dots,d$ and real numbers $a_n^0,\dots,a_n^d\in [0,1]$, $\sum_i a_n^i=1$ such that
$
\vec u=\sum_{i=0}^d a_n^i\vy_n^i$, $\vy_n^i\in F(\vx_n^i).
$
Since $2d+2$ sequences $\{a_n^i\}_{n=0}^\infty$, $\{\vy_n^i\}_{n=0}^\infty$ for $i=0,\dots,d$ are bounded, we can extract subsequences (still index the sequences by $n$ for simplicity)
such that all of them are convergent.
As a result,
$
\vec u=\sum_{i=0}^d a^i\lim_n\vy_n^i,\; \vy_n^i\in F(\vx_n^i),
$
where
$
a^i=\lim_n a_n^i$, and  $a^i\in[0,1],\sum_{i=0}^d a^i=1.
$
Since $\vx_n^i\to \vx$, by \eqref{10-9-1}, $\lim_n\vy_n^i\in F(\vx)$.
Combined with the convexity of $F(\vx)$, we obtain $\vec u\in F(\vx)$.
So,
$\cap_{\eps>0}\co\;F(\vx+\eps B)\subset F(\vx).$
The proof is complete.
\end{proof}

It is noted that we need only  take care of the ``characterization of the limit" part since the other parts are the same as that of Theorem \ref{mth-1}.
With the helps of Propositions \ref{prop-3.2-F} and \ref{prop63}, the arguments of ``characterization of the limit" part are similar to that of Theorem \ref{mth-1}; the details are thus omitted.
\end{proof}

\begin{rem}\label{rem-compare}
	The difficulty in our setting is that we impose neither
	continuity to the dynamics of the discrete iterations nor the limit systems.
	As a result, although we obtain the boundedness and equicontinuity of $\{\vX^n(\cdot)\}$ and can extract a convergent subsequence with the limit $\vX(\cdot)$, it is impossible to characterize the limit
	using continuity and compactness.
	To illustrate,
	we mention some related works and methods in the literature.
	In \cite{KC78,KY03}, the continuous dynamics
	with the limits being
	a set-valued mapping were treated.
	In this case, it is still possible to pass to the limit after extracting  convergent subsequence
	to characterize the limit.
	In \cite{K83,KY03,MP87}, possibly discontinuous $\vb_n(\cdot,\cdot)$ were considered, but the limits
	have some regularities.
	Under the regularities of the limits and some assumptions on existence of a Lyapunov function, certain  average takes place; see \cite{K83} for more details.
	Along another line, M\'etivier and Priouret in \cite{MP87} express the limit function
	in term of integration of $\vb_n(\cdot,\xi)$ over invariant measure of the noise process
	and use a Poisson equation approach; and thus,
	under some suitable conditions,
	the corresponding limit (continuous) differential equation may be obtained.
	In \cite{KC78}, the  case of that $\vb_n(\cdot,\cdot)$ allowing to be discontinuous and the limit being a set-valued mapping $G(\cdot)$ is considered.
	But the continuity of $G(\cdot)$ in the Hausdorff metric defined as
	$$
	d(S_1,S_2):=\sup_{\vy\in S_2}\inf_{\vx\in S_1} |\vy-\vx|+\sup_{\vx\in S_1}\inf_{\vy\in S_2}|\vy-\vx|,\;\forall S_i\subset \R^d, i=1,2,
	$$ is needed.
	However, these assumptions may not be satisfied when we do not have the desired continuity in applications. For instant, in the example of Lasso algorithm, which will be illustrated later, we need to consider set-valued mapping representing the sub-gradient of the function $|\vx|$. For example, in one-dimensional example, one may need to consider
	$
	G(x)=\begin{cases}
	\{-1\}\text{ if }x>0,\\
	[-1,1]\text{ if }x=0,\\
	\{1\}\text{ if }x<0.
	\end{cases}
	$
	This set-valued mapping is not continuous at $0$ in the Hausdorff metric.
	Except \cite{BHS05}, there has been
	no
	general approach in the literature
	for studying convergence of stochastic approximation schemes involving set-valued mappings
	without continuity.
		However,
	the setup and results of
	the current paper
	are different
than  that of \cite{BHS05}.
	If we let
	 $\vec b_n(\vx,\xi)$ be independent of $\xi$, $\vec h(\vx,\xi)=0$, $\vbe_n=0$, and $m_n(\vx,\xi)=0,\forall n,\vx,\xi$, where $m_n(\vx,\xi)$ is as in Assumption \ref{S1} or \ref{G1}, we recover the setting and results in \cite{BHS05}.
	 In this paper, $m_n(\vx,\xi)$ is not required tending to $0$, which
	 makes the setting
	 more general and
	 applicable in real applications.
In addition, in
	\cite{BHS05},
	 the limit processes are perturbed solutions of the corresponding differential inclusions, whereas we characterize the limit processes by differential inclusions rather than perturbed differential inclusions.
That is done by examining the closure of the set of solutions of a family of differential inclusions for
general set-valued mappings.
\end{rem}

\para{Convergence to equilibrium point.}
The following results
are concerned with globally asymptotic stability of the limit
differential inclusions.
 It also establishes the
 convergence to the equilibria
 of stochastic approximation algorithm \eqref{eq-algo}.
 We introduce the following stability condition for Krasovskii solutions of ODEs with discontinuous right-hand sides, which is similar to Lyapunov condition in classical stability theory.

\begin{enumerate}[label=\textbf{(KS)}]
%
	
	\item\label{S2} There is a unique equilibrium $\vx^*$ of $\bar {\vb}(\cdot)+\bar \vh(\cdot)$, i.e., $\bar {\vb}(\vx^*)+\bar \vh(\vx^*)=\vec 0$ (where, $\bar \vh(\cdot)$ is as in Assumption {\rm\ref{A}}(iii) and $\bar {\vb}(\cdot)$ is as in Assumption {\rm\ref{S1}});
	and there exists a $C^\infty$-smooth pair of functions $(V,W)$ satisfying that $V(\vx)>0$ and $W(\vx)>0,\forall \vx\neq\vec 0$, $V(\vec 0)=0$, and the sublevel sets $\{\vx\in\R^d: V(\vx)\leq l\}$ are bounded for every $l\geq 0$, and
	$$\limsup_{\vy\to \vx}\langle \nabla V(\vx), \bar {\vb}(\vy+\vx^*)+\bar \vh(\vy+\vx^*)\rangle\leq -W(\vx),\;\forall \vx\neq\vec 0.$$

\end{enumerate}

\begin{thm}\label{mth-2}
	Consider
	algorithm \eqref{eq-algo}. Under Assumptions {\rm \ref{A}},
	{\rm\ref{S1}}, {\rm\ref{S2}}, and  boundedness of $\{\vX_n\}$,
	there exists a null set $\Omega_0$ such that if $\omega\notin\Omega_0$, then
	$\vX_n$ converges to the unique equilibrium $\vx^*$.
	
\end{thm}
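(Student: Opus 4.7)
The plan is to reduce to the fourth bullet of Theorem \ref{mth-1} by upgrading the $\limsup$ condition in \ref{S2} into a genuine Lyapunov statement for the limit differential inclusion \eqref{eq-kbh}, and then verifying that $\{\vx^*\}$ is globally asymptotically stable with domain of attraction equal to all of $\R^d$.

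The key technical step is to show that for every $\vx\neq\vec 0$ and every $\vec v\in \mathcal K[\bar{\vb}+\bar{\vh}](\vx+\vx^*)$ one has $\langle \nabla V(\vx),\vec v\rangle\leq -W(\vx)$. Given $\eps>0$, \ref{S2} supplies a $\delta>0$ such that $\langle \nabla V(\vx),\bar{\vb}(\vec z)+\bar{\vh}(\vec z)\rangle\leq -W(\vx)+\eps$ for every $\vec z\in B(\vx+\vx^*,\delta)$. Since $\langle \nabla V(\vx),\cdot\rangle$ is a continuous linear functional, this bound persists on the convex hull and then on its closure, i.e., on $\co(\bar{\vb}+\bar{\vh})(B(\vx+\vx^*,\delta))$; because $\mathcal K[\bar{\vb}+\bar{\vh}](\vx+\vx^*)$ is the intersection of these sets over $\delta>0$, the inequality holds on $\vec v$, and $\eps\to 0$ closes the claim. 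Setting $U(t):=V(\vX(t)-\vx^*)$ for any Krasovskii solution $\vX(\cdot)$ of \eqref{eq-bh} supplied by Theorem \ref{mth-1}, the absolute continuity of $\vX(\cdot)$ and the smoothness of $V$ yield $\dot U(t)=\langle \nabla V(\vX(t)-\vx^*),\dot \vX(t)\rangle\leq -W(\vX(t)-\vx^*)$ almost everywhere, a strict Lyapunov inequality away from $\vx^*$.

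From here, standard Lyapunov reasoning for differential inclusions applies. The radial unboundedness of the sublevel sets $\{V\leq l\}$ keeps every Krasovskii solution bounded for all time, and monotonicity of $U$ combined with $W>0$ on $\R^d\setminus\{\vec 0\}$ forces $\vX(t)\to\vx^*$: otherwise one could extract $t_k\to\infty$ with $\vX(t_k)\to\vy\neq\vx^*$, and continuity of $W$ together with the estimate above would produce a positive lower bound for $-\dot U$ on a uniform neighborhood of each $t_k$, contradicting the convergence of the monotone function $U(\cdot)$. Consequently $\{\vx^*\}$ is globally asymptotically stable for \eqref{eq-kbh} with $DA(\{\vx^*\})=\R^d$. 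Since $\{\vX_n\}$ is bounded w.p.1, almost every sample path lies in a compact subset of $\R^d=DA(\{\vx^*\})$, and in particular visits such a compact set infinitely often w.p.1. The fourth bullet of Theorem \ref{mth-1} with $\Lambda=\{\vx^*\}$ then delivers $\vX_n\to\vx^*$ off a null set $\Omega_0$. The main obstacle is the first step: the $\limsup$ hypothesis is a one-sided pointwise bound on the classical vector field $\bar{\vb}+\bar{\vh}$, whereas the dynamics live in the Krasovskii hull, and making the inequality survive convexification and closure is the technical heart, enabled by the linearity and continuity of $\langle\nabla V(\vx),\cdot\rangle$ meshing with the $\delta$-intersection defining $\mathcal K$.
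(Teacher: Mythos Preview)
Your proof is correct and follows essentially the same overall strategy as the paper: establish that $\{\vx^*\}$ is globally asymptotically stable for the Krasovskii dynamics \eqref{eq-kbh}, then invoke the last bullet of Theorem~\ref{mth-1}. The only difference is how the stability step is obtained. The paper simply cites Theorem~\ref{thm-sta} (the Clarke--Ledyaev--Stern characterization) as a black box: assumption~\ref{S2} is exactly the smooth-pair condition there, so the Krasovskii solutions are strongly asymptotically stable in Clarke's sense, hence globally asymptotically stable in the Lyapunov sense. You instead unpack the sufficiency direction of that theorem by hand, showing that the $\limsup$ bound in~\ref{S2} passes through convex closure (via linearity and continuity of $\langle\nabla V(\vx),\cdot\rangle$) to yield $\langle\nabla V(\vx),\vec v\rangle\leq -W(\vx)$ for every $\vec v\in\mathcal K[\bar\vb+\bar\vh](\vx+\vx^*)$, and then run a direct Lyapunov argument. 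Your route is more self-contained and makes transparent exactly why the Krasovskii hull does not spoil the decrease condition; the paper's route is shorter but relies on the external reference~\cite{C88}.
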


For the general case, where the Krasovskii operator is replaced by  set-valued mappings, we introduce a stability condition {\rm \ref{G2}} as follows.
 Our approach is based on a novel method, namely, $\mathcal U$-generalized Lyapunov functional method for differential inclusions.

\begin{enumerate}[label=\textbf{(GS)}]
%
%

	\item\label{G2}
	There is a unique $\vx^*$ such that $\vec 0\in \bar \vh(\vx^*)+G(\vx^*)$ (where, $\bar \vh(\cdot)$ is as in Assumption {\rm\ref{A}(iii)} and $G(\cdot)$ is as in Assumption {\rm\ref{G1}}); and there exists a $\mathcal U$-generalized Lyapunov function $V:\R^d\to\R_+$ such that the sublevel sets $\{\vx\in\R^n:V(\vx)\leq l\}$ are compact for every $l>0$ and
	the $\mathcal U$-generalized derivative $\dot{\bar V}^{G^*}_{\mathcal U}(\vx)$ satisfies
	$\dot{\bar V}^{G^*}_{\mathcal U}(\vx)\leq -\hat V_0(\vx),\;\forall \vx\neq\vec 0,$
	for some positive
	definite
	function
	$\hat V_0$,
	where
	$G^*(\vx):= \bar \vh(\vx+\vx^*)+G(\vx+\vx^*);$
	see Section \ref{sec:non} (Definition \ref{def-56-11}, Definition \ref{non-def-1}(iv)) for  these concepts.
	
\end{enumerate}

\begin{thm}\label{mth-4}
If we replace Assumptions {\rm\ref{S1}} and {\rm\ref{S2}} by {\rm\ref{G1}} and {\rm \ref{G2}}, then the conclusion of Theorem \ref{mth-2} continue to hold.
\end{thm}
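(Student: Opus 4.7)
The plan is to reduce Theorem \ref{mth-4} to a global asymptotic stability statement for the limiting differential inclusion and then invoke Theorem \ref{mth-3} directly. By Theorem \ref{mth-3}, outside a null set $\Omega_0$, every limit of a convergent subsequence of $\{\vX^n(\cdot)\}$ is a solution of $\dot \vX(t)\in \bar\vh(\vX(t))+G(\vX(t))$, its limit set is internally chain transitive for \eqref{eq-hg}, and the limit points of $\{\vX_n\}$ lie in the chain recurrent set $\mathcal R$ of \eqref{eq-hg}. Hence, to conclude that $\vX_n\to \vx^*$, it suffices to show $\mathcal R=\{\vx^*\}$, and in fact that $\vx^*$ is globally asymptotically stable for \eqref{eq-hg} under Assumption \ref{G2}.

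To establish this stability, I would translate by $\vx^*$: set $\vy:=\vX-\vx^*$ so the inclusion becomes $\dot\vy(t)\in G^*(\vy(t))$ with $G^*(\vy):=\bar\vh(\vy+\vx^*)+G(\vy+\vx^*)$. By Assumption \ref{G2} the $\mathcal U$-generalized Lyapunov function $V$ has compact sublevel sets and its $\mathcal U$-generalized derivative along $G^*$ satisfies $\dot{\bar V}^{G^*}_{\mathcal U}(\vy)\leq -\hat V_0(\vy)$ for all $\vy\neq\vec 0$, with $\hat V_0$ positive definite. Using the monotonicity property of $V$ along absolutely continuous solutions of the inclusion (via the $\mathcal U$-generalized derivative, as developed in Appendix \ref{sec:pre}, in particular Definition \ref{def-56-11} and Definition \ref{non-def-1}(iv)), one obtains the differential comparison $\frac{d}{dt}V(\vy(t))\leq \dot{\bar V}^{G^*}_{\mathcal U}(\vy(t))\leq -\hat V_0(\vy(t))$ almost everywhere. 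Integrating yields that $t\mapsto V(\vy(t))$ is non-increasing, and strictly decreases on any time interval where $\vy(t)\neq \vec 0$. Combined with compactness of the sublevel sets and positive definiteness of $\hat V_0$, a standard LaSalle-type argument forces $\vy(t)\to \vec 0$, i.e., every solution of \eqref{eq-hg} converges to $\vx^*$, so $\vx^*$ is globally asymptotically stable.

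With global asymptotic stability in hand, the chain recurrent set reduces to $\{\vx^*\}$: any point of $\mathcal R$ must lie on an internally chain transitive invariant set on which $V$ can neither strictly decrease (by invariance) nor strictly increase (by the comparison above), forcing $V$ to be constant and equal to $V(\vec 0)=0$ on that set after translation, hence the set is $\{\vx^*\}$. Theorem \ref{mth-3} then yields $\vX_n\to \vx^*$ on $\Omega\setminus\Omega_0$, concluding the proof.

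The main obstacle is the comparison $\frac{d}{dt}V(\vy(t))\leq \dot{\bar V}^{G^*}_{\mathcal U}(\vy(t))$ along inclusion solutions. Because $V$ need not be smooth and $G^*$ is only a closed-graph, convex, compact-valued set-valued mapping, no classical chain rule applies; one must carefully interpret the $\mathcal U$-generalized derivative and verify that, for almost every $t$, the generalized one-sided rate of change of $V$ along $\vy(\cdot)$ is controlled by $\dot{\bar V}^{G^*}_{\mathcal U}(\vy(t))$. Once this non-smooth chain-rule step is in place, the remaining Lyapunov/LaSalle argument is essentially routine.
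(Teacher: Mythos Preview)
Your proposal is correct and follows essentially the same route as the paper: combine Theorem~\ref{mth-3} with a global asymptotic stability result for the limit inclusion $\dot\vX\in\bar\vh(\vX)+G(\vX)$ under Assumption~\ref{G2}, and then conclude $\vX_n\to\vx^*$. The only difference is that the paper does not re-derive the stability step or the non-smooth chain rule you flag as the main obstacle; it simply invokes Theorem~\ref{thm-sta2} (quoted from \cite{KDR17}) in Appendix~\ref{sec:63-asy}, which is precisely the statement that the $\mathcal U$-generalized Lyapunov condition $\dot{\bar V}^{G^*}_{\mathcal U}(\vx)\le -\hat V_0(\vx)$ with compact sublevel sets implies strong global asymptotic stability of the origin for the translated inclusion.
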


\begin{proof}[Proof of Theorems \ref{mth-2} and \ref{mth-4}.]
The stability of differential inclusions is carefully studied in Section \ref{sec:63-asy}.
The proof of Theorem \ref{mth-2} follows from Theorem \ref{mth-1} and Theorem \ref{thm-sta} in Section \ref{sec:63-asy}.
First, under Assumption {\rm \ref{S2}}, the Krasovskii solutions of \eqref{eq-bh} are strongly asymptotically stable (in Clarke's sense) at $\vx=\vx^*$. Therefore, every Krasovskii solutions of \eqref{eq-bh} is globally asymptotically stable at $\vx=\vx^*$ in the Lyapunov sense.
As the last part of Theorem \ref{mth-1}, $\{\vX_n\}$ must converge to the equilibrium point $\vx^*$ w.p.1.
Similarly, Theorem \ref{mth-4} is obtained by combining Theorem \ref{mth-3} and Theorem \ref{thm-sta2}.
\end{proof}

\para{Projection algorithms.}
As was mentioned before, the assumption on boundedness of $\{\vX_n\}$ is not restrictive.
Since the boundedness is not our main focus, we  often assume it in our main results so as to make the argument simpler.
Further conditions and/or various projection algorithms may be used; see Remark \ref{rem-00}.
We proceed to state the results for constrained algorithm \eqref{eq-proj}.

\begin{enumerate}[label=\textbf{(P)}]
	\item\label{H1}
	The projection space $H$ is a hyper-rectangle, i.e., $H=\{\vx\in\R^d: b_i\leq x_i\leq c_i\}$ for simplifying arguments. In general, $H$ can be compact and convex; and $H=\{\vx\in\R^d: q_i(\vx)\leq 0, i=1,\dots,N\}$, the constrained functions $q_i(\cdot)$, $i=1,\dots,N$ are continuously differentiable and at $\vx\in\partial H$, the gradients $q_{i,\vx}(\cdot)$ are linearly independent.
\end{enumerate}	

\begin{enumerate}[label=\textbf{(PS)}]
	\item\label{H2}
	There is a unique $\vx^*\in H$ such that $\vec 0\in \co\;\Pi_H\big[\bar \vh(\vx^*)+G(\vx^*)\big]$; and there exists a $\mathcal U$-generalized Lyapunov function $V:\R^d\to\R_+$ such that the sublevel sets $\{\vx\in\R^d:V(\vx)\leq l\}$ are compact for every $l>0$ and
	$\dot{\bar V}^{G_H^*}_{\mathcal U}(\vx)\leq -\hat V_0(\vx),\;\forall\; \vec 0\neq \vx\in H,$
	for some positive definite function $\hat V_0$, where
	$G_H^*(\vx):= \co\;\Pi_H\big[\bar \vh(\vx+\vx^*)+G(\vx+\vx^*)\big].$

\end{enumerate}

\begin{thm}\label{mth-5}
Consider
algorithm \eqref{eq-proj}. Assume {\rm\ref{G1}}, {\rm \ref {H1}}, and {\rm \ref{A}}
with {\rm\ref{A}(ii)} replaced by $\vh(\vx,\xi)$ being $($uniformly in $\xi)$ locally bounded in $\vx$  {\rm (}i.e., $|\vh(\vx,\xi)|\leq K(\vx)$ for some locally bounded function $K)$.
Then, there is a null set $\Omega_0$ such that $\forall \omega\notin\Omega_0$, $\{\vX^n(\cdot)\}$ is bounded and equicontinuous.
Let $\vX(\cdot)$ be the limit of a convergent subsequence of $\{\vX^n(\cdot)\}$. Then $\vX(t)$ is a solution of the differential inclusion
\begin{equation}\label{eq-kbh-proj}
\dot \vX(t)\in \co\;\Pi_H\big(\bar\vh(\vX(t))+G(\vX(t))\big).
\end{equation}
The limit set of $\{\vX(\cdot)\}$ is internally chain transitive
and as a consequence,
the limit points of $\{\vX_n\}$ are contained in $\mathcal R$, the set of chain recurrent points
 of \eqref{eq-kbh-proj} $($see Section \ref{sec:dyn} for the definitions$)$.
In addition, if we assume further {\rm\ref{H2}}, then $\{\vX_n\}$ converges to $\vx^*$ w.p.1.
\end{thm}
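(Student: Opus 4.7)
The plan is to adapt the proof of Theorem \ref{mth-3} to incorporate the projection operator, with the key new ingredient being a reflection term. Define $\vec Z_n$ by $a_n\vec Z_n:=\vX_{n+1}-\tilde\vX_{n+1}$, so that $\vec Z_n=\vo$ on the event $\{\tilde\vX_{n+1}\in H\}$ and otherwise points from $\tilde\vX_{n+1}$ toward its orthogonal image in $H$. Then \eqref{eq-proj} rewrites as
$$\vX_{n+1}=\vX_n+a_n\vb_n(\vX_n,\xi_n)+a_n\vh(\vX_n,\zeta_n)+a_n\vh_0(\wdt\zeta_n)+a_n\vbe_n+a_n\vec Z_n,$$
i.e., \eqref{eq-algo} augmented by the reflection. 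Compactness of $H$ makes boundedness of $\{\vX_n\}$ automatic.

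The first step will be to establish boundedness and equicontinuity of $\{\vX^n(\cdot)\}$. The unprojected drifts are uniformly controlled: $\vb_n\in G(\vX_n)+m_n\bar B$ with $G$ in a common ball and $m_n$-averages tending to zero by \ref{G1}; $\vh$ is bounded on the compact set $H$ by the strengthened \ref{A}(ii); and $\vga^n,\vps^n$ tend uniformly to $\vo$ on bounded intervals by \ref{A}(iv)--(v). Since $|a_n\vec Z_n|\leq|\tilde\vX_{n+1}-\vX_n|$, the reflection inherits the same control. A repetition of Part 1 of the proof of Theorem \ref{mth-1} then produces a null set $\Omega_0$ off of which $\{\vX^n(\cdot)\}$ is bounded and equicontinuous.

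The second and main step will be to characterize the limit of a convergent subsequence $\vX(\cdot)$. Writing the integral form
$$\vX^n(t)=\vX_n+\int_0^t\vb_n(\bar{\vX}^0(t_n+s),\bar\xi^0(t_n+s))ds+\int_0^t\vh(\bar{\vX}^0(t_n+s),\bar\xi^0(t_n+s))ds+\vga^n(t)+\vps^n(t)+\vec Z^n(t),$$
where $\vec Z^n$ is the piecewise linear interpolation of $\{a_i\vec Z_i\}$, I will invoke Arzel\`a--Ascoli to pass to a further subsequence along which $\vec Z^n\to\vec Z^\infty$ uniformly on bounded intervals, with $\vec Z^\infty$ absolutely continuous. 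Arguments parallel to Theorem \ref{mth-3} (with \ref{G1} in place of \ref{S1}) then yield, for any $\eps,\delta>0$ and all large $n$,
$$\vX(t)-\vX(s)\in\int_s^t\bar \vh(\vX(r))dr+\int_s^t\co\;G(\vX(r)+\eps B)dr+\delta\bar B+\bigl(\vec Z^\infty(t)-\vec Z^\infty(s)\bigr).$$
The hard part will be identifying $\dot{\vec Z}^\infty$. Exploiting the coordinate-wise structure of $\Pi_H$ on the hyper-rectangle $H$, each coordinate of $\vec Z_n$ is active only when the corresponding coordinate of $\tilde\vX_{n+1}$ has exited its interval, and then points inward with the minimum magnitude needed to return to $H$. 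In the continuous limit this forces $\dot{\vec Z}^\infty(t)$ to lie in the inward normal cone of $H$ at $\vX(t)$ and to combine with the unprojected drift to produce an element of $\Pi_H(\bar\vh(\vX(t))+G(\vX(t)))$. Combined with Propositions \ref{prop-3.2-F} and \ref{prop63} (the latter applied to the closed-graph set-valued mapping $\co\;\Pi_H(\bar\vh(\cdot)+G(\cdot))$), the $\eps$- and $\delta$-approximations collapse to yield \eqref{eq-kbh-proj}.

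Finally, chain transitivity of the limit set of $\vX(\cdot)$ with respect to \eqref{eq-kbh-proj} follows from the closure arguments of \cite[Theorem 3.6]{BHS05} already employed in Theorems \ref{mth-1} and \ref{mth-3}, so limit points of $\{\vX_n\}$ lie in the chain-recurrent set $\mathcal R$. Under \ref{H2}, the $\mathcal U$-generalized Lyapunov methodology of Section \ref{sec:63-asy} (in the spirit of Theorem \ref{thm-sta2}) delivers global asymptotic stability of \eqref{eq-kbh-proj} at $\vx^*$, from which $\vX_n\to\vx^*$ w.p.1 follows as in Theorem \ref{mth-4}.
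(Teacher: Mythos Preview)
Your overall strategy---introduce a reflection term $\vec Z_n$, establish equicontinuity, pass to the limit, and identify $\dot{\vec Z}^\infty$ as the normal-cone correction---matches the paper's proof. The chain-transitivity and stability parts are handled identically.

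There is, however, a real gap in your equicontinuity argument for $\vec Z^n$. The bound $|a_n\vec Z_n|\leq|\tilde\vX_{n+1}-\vX_n|$ is correct (projection onto a convex set containing $\vX_n$), but it does \emph{not} transfer the control you claim. The right-hand side contains $a_n\vh_0(\wdt\zeta_n)$, and Assumption~\ref{A}(iv) only controls \emph{signed} partial sums $\sum a_i\vh_0(\wdt\zeta_i)$, not $\sum a_i|\vh_0(\wdt\zeta_i)|$. So you cannot sum your bound to conclude equicontinuity of $\vec Z^n$. Nor can you argue that $\vec Z^n=\vX^n-(\text{rest})$ is equicontinuous, since that would presuppose equicontinuity of $\vX^n$ itself.

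The paper repairs this as follows. Using \ref{A}(iv) and Borel--Cantelli, one extracts a sequence $Y_n\to0$ with $|a_n\vh_0(\wdt\zeta_n)|\leq Y_n/2$ for all but finitely many $n$; set $I_n=\1_{\{|a_n\vh_0(\wdt\zeta_n)|\leq Y_n/2\}}$. The reflection is then split as $\vta_n+\vpp_n$, where $\vta_n$ is the projection correction for the \emph{truncated} update $\vM_n^Y$ (with $\vh_0$-term multiplied by $I_n$) and $\vpp_n$ absorbs the finitely many exceptional steps. One then has the pointwise bound $|\vta_n|\leq a_n(K_1+Y_n)$ together with $\vta_n=\vo$ whenever $\mathrm{distance}(\vX_n,\partial H)\geq K_2(Y_n+a_n)$, and $\vta_n$ orthogonal to $H$ at $\Pi_H(\vM_n^Y)$. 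Equicontinuity of $\vta^0(\cdot)$ is obtained not by summation but by contradiction: if it failed, there would be times $s_k$ with $\vX^0(s_k)$ near $\partial H$ yet $\vX^0(s_k+\delta_k)$ bounded away, contradicting the uniform continuity of $\vX^0-\vta^0$ (which \emph{is} established as in the unconstrained case) combined with the geometric observations above. Once this truncation and geometric argument are inserted, the rest of your plan goes through essentially as written.
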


\begin{proof}[Proof of Theorem \ref{mth-5}.]
First, to use Assumption
{\rm \ref{A}(iv)}
in the projection algorithm, let $Y_n$ be a sequence of positive real numbers such that $Y_n\to 0$ and $|a_n\vh_0(\wdt \zeta_n)|\leq Y_n/2$ excepting a finite number of $n$ w.p.1 (such a sequence $Y_n$ exists owing to Assumption
{\rm\ref{A}(iv)},
Borel-Cantelli lemma \cite[Section 5]{KC78}),
and let
$I_n$ be
the indicator of the set where $|a_n\vh_0(\wdt \zeta_n)|\leq Y_n/2$.
To proceed, we write
algorithm \eqref{eq-proj} as
\begin{equation}\label{eq-t5-1}
\vX_{n+1}=\vX_n+a_n\big[\vb_n(\vX_n,\xi_n)+\vh(\vX_n,\zeta_n)+\vh_0(\wdt \zeta_n)+\vbe_n\big]+\vta_n+\vpp_n,
\end{equation}
where
$$
\vta_n=\Pi_H(\vM_n^Y)-\vM_n^Y,\quad \vpp_n=(\vM_n^Y-\vM_n)+\big[\Pi_H(\vM_n)-\vX_n\big](1-I_n),
$$
$$
\vM_n=\vX_n+a_n\big[\vb_n(\vX_n,\xi_n)+\vh(\vX_n,\zeta_n)+\vh_0(\wdt \zeta_n)+\vbe_n\big],
$$
and
$$
\vM_n^Y=\vX_n+a_n\big[\vb_n(\vX_n,\xi_n)+\vh(\vX_n,\zeta_n)+\vh_0(\wdt \zeta_n)+\vbe_n\big]I_n.
$$
The purpose of partitioning \eqref{eq-t5-1} enables us to apply directly our assumptions (which is assumed without any constrains).

{\bf Part 1: Boundedness and Equicontinuity.}
Similar to \eqref{eq-4.00}, we have that
\begin{equation}\label{eq-4.00-11}
\begin{aligned}
\vX^n(t)
=&\vX_n+\int_0^t \vb_n(\bar{\vX}^0(t_n+s),\bar \xi^0(t_n+s))ds+\int_0^t \vh(\bar{\vX}^0(t_n+s),\bar \xi^0(t_n+s))ds\\
&+\vga^n(t)+\vps^n(t)+\vta^n(t)+\vpp^n(t).
\end{aligned}
\end{equation}
In the above,
$\vta^n(t):=\vta^0(t_n+t)-\vta^0(t_n)$, $\vpp^n(t):=\vpp^0(t_n+t)-\vpp^0(t_n),$
where, $\vta^0(\cdot)$ and $\vpp^0(\cdot)$ are the piecewise linear interpolations of $\{\sum_{i=0}^{n-1}a_i\vta_i\}$ and $\{\sum_{i=0}^{n-1}a_i\vpp_i\}$, respectively; and the $\vga^n(\cdot)$, $\vps^n(\cdot)$ are as in the proof of Theorem \ref{mth-1}.

Let $\Omega_0$ be the union of sets in which $|a_n\vh_0(\wdt \zeta_n)|\geq Y_n/2$ infinitely often and the exceptional sets in
\ref{A}(iii)-(v),
\ref{G1} (the union being taken over countable dense set $\mathcal H_0$).
[As we mentioned before, for each $\vx\in \mathcal H_0$, there are (null) exceptional sets (in which, the convergence assumptions do not hold) corresponding to
\ref{A}(iii)-(v),
and \ref{G1};  $\Omega_0$ is taken to contain all these sets.
Since $\mathcal H_0$ is countable, $\Omega_0$  still has measure zero.]
Therefore, we work with a fixed $\omega\notin\Omega_0$.

	As in the proof of Theorem \ref{mth-1}, we proved that $\vga^n(\cdot)$ and $\vps^n(\cdot)$ converge uniformly to $\vo$ on finite $t$-intervals.
	Moreover, because $I_n$ is 0 only for a finite number of $n$,  only a finite number of the terms of $\{(1-I_n)\}$ are nonzero.  Since
	$$
	\vpp_n\leq a_n |h(\vX_n,\xi_n)+b_n(\vX_n,\zeta_n)+\vec h_0(\wdt\zeta_n)+\vbe_n|(1-I_n)+|\Pi_H(\vM_n)-\vX_n|(1-I_n),
	$$
	it is readily seen that $\vpp^n(\cdot)$ converges to $\vo$ uniformly on finite intervals as $n\to\infty$.
	The boundedness of $\{\vX^n(\cdot)\}$ is
clear because of the use of the
projection algorithm.
	
	Next, we  prove the equicontinuity of $\{\vX^n(\cdot),\vta^n(\cdot)\}$.
	It suffices to prove the equicontinuity for $\vta^n(\cdot)$;
see the proof of Theorem \ref{mth-1}.
	By the definition of $\vta_n$, we have following observations (see e.g., \cite[Proof of Theorem 5.3.1]{KC78}):
		$\vta_n$ is orthogonal to $H$ at the point $\Pi_H(\vM_n^Y)$;
		and $|\vta_n|\leq a_n(K_1+Y_n)$ for some constant $K_1$;
		 and there is a constant $K_2$ such that $\vta_n=\vec 0$ if $\text{distance}(\partial H,\vX_n)\geq K_2(Y_n+a_n)$.
	Because of these observations and the fact that $\vX^0(\cdot)-\vta^0(\cdot)$ is uniformly continuous (due to this difference is in fact the process in non-projected case and is proved before),
	$\vta^0(\cdot)$ must be uniformly continuous on $[0,\infty)$.
	Otherwise, there would be $s_k\to\infty$, $\delta_k\to 0$ and $\eps>0$ such that
	$$
	|\vX^0(s_k+\delta_k)-\vX^0(s_k)| \geq \eps,\text{ for all }k,
	$$
	with $\text{distance} (\vX^0(s_k),\partial H)\to0$ as $k\to\infty$ and $\text{distance}(\vX^0(s_k+\delta_k),\partial H)\geq \eps/2$.
 However, this contradicts the observations of $\vta_n$ and the uniform continuity of $\vX^0(\cdot)-\vta^0(\cdot)$. The uniform continuity of $\vta^0(\cdot)$ implies the equicontinuity of $\{\vta^n(\cdot)\}$.

{\bf Part 2: Characterization of the limit.}
Extract a convergent subsequence of $\{(\vX^n(\cdot),\vta^n(\cdot))\}$, and index it again by $n$ with the limit
$(\vX(\cdot),\vta(\cdot))$.
Using the fact that $\vga^n(\cdot)$, $\vps^n(\cdot)$, $\vpp^n(\cdot)$ converge to $\vo$ uniformly and letting $n\to\infty$ in \eqref{eq-4.00-11}, by a similar argument as in the unconstrained case, one has that on bounded intervals
\begin{equation}\label{eq-t5-2}
\vX(t+s)-\vX(t)\in \int_t^{t+s} G(\vX(r))dr +\int_t^{t+s} \bar \vh(\vX(r))dr +\vta(t+s)-\vta(s).
\end{equation}
As in \cite[Proof of Theorem 5.3.1]{KC78} or \cite[Proof of Theorem 6.8.1]{KY03}, we have
\begin{equation}\label{eq-t5-3}
\vta(t+s)-\vta(s)=\int_t^{t+s}\vec z(\vX(r))dr,
\end{equation}
where $\vec z(\vX(t))$ is the minimal force needed to keep $\vX(t)$ in $H$.
A consequence of \eqref{eq-t5-2} and \eqref{eq-t5-3} is that
\begin{equation}\label{eq-t5-4}
\vX(t+s)-\vX(s)\in \int_t^{t+s} \co\;\Pi_H\big[\bar\vh(\vX(r))+G(\vX(r))\big]dr.
\end{equation}
Combining \eqref{eq-t5-4} and Proposition \ref{lem-41}, one has that $\vX(t)$ is absolutely continuous and for almost all $t$,
$$
\dot{\vX}(t)\in \co\;\Pi_H\big[\bar\vh(\vX(t))+G(\vX(t))\big].
$$

{\bf Part 3: Asymptotic stability.}
This part is the same as that of the unconstrained case and is thus omitted.
\end{proof}

\begin{rem}\label{rem1}
Recall
that we often wish to find
roots of some functions and/or set-valued mappings. [For the roots of set-valued mappings, we mean that at these points (roots), the value of these mappings (being a set) contains $\vec 0$].
These points are often called  ``stationary points" of the corresponding differential equations or inclusions.
In the  set-valued and differential inclusion cases, the roots
may not be (strongly) stationary, where ``strongly" means the statement is true for all solutions.
If the function is vector-valued and is sufficiently smooth (namely, $d$-time continuously differentiable, where $d$ is the dimension),
 then the set of stationary points is
equal to $\mathcal R$, the set of chain recurrent points (of the corresponding differential equation).
Otherwise, by a sophisticated process, Hurley in \cite{H95} shows that if the function is smooth but not smooth enough, the set of chain-recurrent points maybe strictly larger than the set of stationary points. Hence, $\{\vX_n\}$ may not converge to the desired
stationary points.
In our setting, if there is no condition to guarantee the stability of stationary
points (termed roots for simplicity), the algorithm may
not converge to a set of roots, even if the algorithm starts at one of the roots.
It is  easy to give an example;
see Example \ref{ex-num-4} in Section \ref{sec:num}.
\end{rem}

\begin{rem}
The stability of the
systems of interest can be characterized by means of the stability in set-valued dynamical systems \cite[Section 3 and 4]{BHS05} and references therein.
However, the conditions in the aforementioned reference is relatively abstract and difficult
to verify in applications.
We use criteria on ${\mathcal U}$-generalized Lyapunov functions instead.
Moreover,
we give an example in Section \ref{sec:app4} to show that convergence
can be proved by applying our results, but cannot be done otherwise.
\end{rem}

\begin{rem}\label{rem-5}
It is worth noting that  the Clarke sub-differential of Lipschitz continuous function has the important property \eqref{cdF}.
In addition, the stability assumptions \ref{S2}, \ref{G2}, and \ref{H2} are not restrictive.
  They are similar to Lyapunov conditions in classical stability analysis excepting that we need to compute a new type of derivative (namely, $\mathcal U$-generalized derivative)
 for $\mathcal U$-generalized Lyapunov functional. The examples of computing these new functions are given in Section \ref{sec:app} and Appendix \ref{sec:non}.
Moreover, Theorems \ref{mth-1} and \ref{mth-2} are less general than Theorems \ref{mth-3} and \ref{mth-4}. However, if we can
express a set-valued mapping as the Karasovskii operator of some vector-valued function,  condition \ref{S2} is more convenient
 to verify
 than that of \ref{G2}.
In the projection algorithm,
since
$H$ is convex, $\Pi_H(\vx)$ is uniquely defined and
$\Pi_H(\cdot)$ is continuous.
However, the convex closure in \eqref{eq-kbh-proj} cannot be relaxed
since a continuous projection operator may not preserve the convexity.
\end{rem}

		
		

		
		
		

\para{Biased Stochastic Approximation.}
Next, we study biased stochastic approximation.
With the term $\vbe_n$ representing a bias,
by ``biased stochastic approximation", we mean the bias is not ``asymptotically negligible".
To proceed,
let
$
\eta=\limsup_{n\to\infty}\|\vbe_n\|
$
be a random variable that is bounded w.p.1.
We study
stochastic approximation schemes \eqref{eq-algo} and \eqref{eq-proj}
with the dependence on $\eta$.
For a set $S\subset\R^d$, an $\eps$-neighborhood of $S$ denoted by $N_{\eps}(S)$ is defined as
$$
N_{\eps}(S)=\{\vx\in\R^d: \text{distance}(\vx,S)\leq \eps\},\;\text{distance}(\vx,S):=\inf_{\vy\in S}|\vx-\vy|.
$$

\begin{thm}\label{mth-6}
Consider
algorithm \eqref{eq-algo}, assume that
{\rm \ref{A}(i)-(iv)}
and {\rm \ref{G1}} hold, and that $\{\vX_n\}$ is bounded w.p.1 $($resp., consider
algorithm \eqref{eq-proj} and assume that
{\rm \ref{A}(i)-(iv)},
{\rm \ref{G1}}, and {\rm\ref{H1}} hold$)$.
\begin{itemize}
\item
 Then, there is a null set $\Omega_0$ such that $\forall \omega\notin\Omega_0$, $\{\vX^n(\cdot)\}$ is bounded and equicontinuous.
\item
Let $\vX(\cdot)$ be the limit of a convergent subsequence of $\{\vX^n(\cdot)\}$. Then $\vX(\cdot)$ is a solution of the differential inclusion
\begin{equation}\label{ubi-1}
\dot \vX(t) \in N_{2\eta}\left(\bar \vh(\vX(t))+G(\vX(t))\right)\ \Big(resp.,\;\dot \vX(t)\in N_{2\eta}\big(\co\;\Pi_H\big(\bar\vh(\vX(t))+G(\vX(t))\big)\big)\Big).
\end{equation}


\item
There exists a $($deterministic$)$ positive function $\phi(\cdot):[0,\infty)\to[0,\infty)$ depending on $\limsup_{n}|\vX_n|$ $($resp., the projection space $H)$ such that
$\lim_{t\to 0}\phi(t)=\phi(0)=0$ and
\begin{equation}\label{ubi-2}
\limsup_{n\to\infty}\mathrm{distance}(\vX_n,\mathcal {R})\leq\phi(\eta),
\end{equation}
where $\mathcal R$ is the set of chain recurrent points of differential inclusion 
$$
\dot{\vX}(t)\in\bar \vh (\vX(t))+G(\vX(t))\ \left(resp.,\;\dot \vX(t)\in\co\;\Pi_H\big(\bar\vh(\vX(t))+G(\vX(t))\big)\right).
$$

\item
Assume further that there is a unique $\vx^*$ such that $\vec 0\in \bar \vh(\vx^*)+G(\vx^*)$; and that there exists a $\mathcal U$-generalized Lyapunov function $V:\R^d\to\R_+$ such that the sublevel sets  $\{x\in\R^n:V(\vx)\leq l\}$ are compact for every $l>0$ and
the $\mathcal U$-generalized derivative $\dot{\bar V}^{G_\eta^*}_{\mathcal U}(\vx)$ satisfies the ``decay condition" in the sense of Assumption {\rm\ref{G2}} $($resp., {\rm\ref{H2}}$)$ with $G^*(\vx)$ being replaced by
$
G^*_\eta(\vx):=N_{2\eta}\left(\bar \vh(\vx+\vx^*)+G(\vx+\vx^*)\right).
$
Then,  $\{\vX_n\}$ converges to $\vx^*$ w.p.1.
\end{itemize}
\end{thm}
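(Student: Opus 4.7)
The plan is to adapt the unbiased arguments of Theorems \ref{mth-3} and \ref{mth-5} while allowing the bias interpolation to carry a residual Lipschitz slope $\eta$ in the limit. I would fix $\omega$ off the null set $\Omega_0$ built from the exceptional sets of \ref{A}(iii)--(iv) and \ref{G1} over a countable dense $\mathcal{H}_0$; by the very definition $\eta=\limsup_n\|\vbe_n\|$, for each $\epsilon>0$ one has $\|\vbe_n\|\leq \eta(\omega)+\epsilon$ for all but finitely many $n$. With the interpolation decomposition \eqref{eq-4.00} (resp.\ \eqref{eq-4.00-11}), every contribution besides the bias interpolation $\vga^n(\cdot)$ is treated exactly as in the unbiased case; the new point is that $\vga^n(\cdot)$ no longer converges to $\vo$ but is uniformly Lipschitz with slope at most $\eta(\omega)+\epsilon$, which is still sufficient for the boundedness and equicontinuity of $\{\vX^n(\cdot)\}$ and (in the projected case) of $\{\vta^n(\cdot)\}$. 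Extracting a convergent subsequence $(\vX^n(\cdot),\vga^n(\cdot))\to(\vX(\cdot),\vga(\cdot))$, the limit $\vga(\cdot)$ is Lipschitz with constant $\eta$, hence $\dot\vga(t)\in\eta\bar B$ a.e. Feeding this into the closure argument underlying Propositions \ref{prop-3.2-F} and \ref{prop63} (with an extra slack $\eta\bar B$ incorporated into the right-hand side, and using that $\Pi_H$ is $1$-Lipschitz in the projected case) delivers \eqref{ubi-1}; the factor $2$ in $N_{2\eta}$ absorbs the auxiliary $\epsilon$.

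\textbf{Chain-recurrent continuation.} For \eqref{ubi-2}, let $\mathcal{R}_\eta$ be the chain recurrent set of the $2\eta$-enlarged differential inclusion on a compact region $\bar K\subset\R^d$ dictated by $\limsup_n|\vX_n|$ (or by $H$). The first two bullets together with the internally chain-transitive characterization of \cite[Theorem 3.6]{BHS05} force the $\omega$-limit set of $\vX(\cdot)$ into $\mathcal{R}_\eta$, so $\limsup_n\mathrm{distance}(\vX_n,\mathcal{R}_\eta)=0$ w.p.\,$1$. I would then invoke the continuation result for chain recurrent sets of set-valued dynamical systems from \cite{BHS12}, applied to the upper semicontinuous, compact convex-valued map $\bar\vh+G$ (resp.\ its projected version, whose requisite regularity follows from \ref{A}(iii), \ref{G1} and the continuity of $\Pi_H$). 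This yields that the Hausdorff excess
\[
\phi(\eta):=\sup_{\vy\in\mathcal{R}_\eta\cap\bar K}\mathrm{distance}(\vy,\mathcal{R})
\]
is deterministic, depends only on $\bar K$, satisfies $\phi(0)=0$, and tends to $0$ as $\eta\to 0^+$; chaining the two estimates gives \eqref{ubi-2}. I expect this to be the main obstacle: it requires verifying that the quantitative form of the continuation theorem from \cite{BHS12} applies to the (possibly projected) right-hand sides considered here, and that the function $\phi$ depends only on the geometry of $\bar K$ and the enlargement parameter.

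\textbf{Convergence to $\vx^*$.} For the final bullet, I would mimic the proofs of Theorems \ref{mth-4} and \ref{mth-5}: under the $\mathcal{U}$-generalized Lyapunov assumption imposed on $G^*_\eta$, the stability machinery of Section \ref{sec:63-asy} (in particular Theorem \ref{thm-sta2}) makes $\vx^*$ globally asymptotically stable in the sense of Lyapunov for every solution of the $2\eta$-enlarged inclusion. Every limit $\vX(\cdot)$ obtained in the first two bullets is such a solution, so its $\omega$-limit set reduces to $\{\vx^*\}$; translating this back to the discrete iterates through the interpolation identification yields $\vX_n\to\vx^*$ w.p.\,$1$. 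Apart from the continuation step highlighted above, the remaining arguments are careful reprises of the unbiased proofs and present no fundamentally new difficulty.
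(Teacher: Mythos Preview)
Your proposal is correct and follows essentially the same route as the paper: treat the bias as a bounded perturbation so that $\vga^n(\cdot)$ is uniformly Lipschitz with slope $\approx\eta$ (hence equicontinuity survives), relax Proposition~\ref{prop-3.2-F} by replacing $\vec q_k\to\vec 0$ with a uniform bound $|\vec q_k|<\eta$ to land in the $2\eta$-enlarged inclusion, and then invoke the continuation of chain recurrent sets from \cite{BHS12} (the paper packages this as Lemma~\ref{proo-6-lem-1}, via \cite[Lemma~5.1]{TD17}) together with the unbiased stability argument based on Theorem~\ref{thm-sta2}. The only cosmetic difference is that the paper phrases the continuation step in terms of a compact $Q\supset\{\vX_n\}$ and the restricted chain recurrent set $\mathcal R_{2\eta}(Q)$, which is exactly your $\bar K$ and $\mathcal R_\eta$.
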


\begin{rem}{\rm
If the dynamics and the limits of stochastic approximations are smooth enough (namely, real analytic or $k$-times continuously differentiable with $k> d$ and $d$ being the dimension of the space), a more precise characterization of the asymptotic bias ($\phi(\eta)$ in \eqref{ubi-2}) is obtained by Tadi\'c and Doucet in \cite{TD17}  using Yomdin theorem $($a qualitative version of the Morse-Sard theorem$)$ and the
Lojasiewicz inequality.
This paper deals with systems with discontinuity.}
\end{rem}

\begin{proof}[Proof of Theorem \ref{mth-6}.] We prove the assertion for unconstrained case only; the constrained case can be handled similarly.
	
{\bf Part 1: Boundedness and equicontinuity.} This part is the same as that of unbiased case.
In fact, we can treat $\vbe_n$ as a (uniformly) bounded term  and hence, boundedness and equicontinuity of sequence of the piecewise linear interpolated processes still hold.

{\bf Part 2: Characterization of the limit.}
The process of obtaining the limit system is almost the same as that of unbiased case, excepting for that the limit differential inclusion should be relaxed.
To be more specific, in Proposition \ref{prop-3.2-F}, if we relax the condition that $\vec q_k(\cdot)\to \vec 0$ uniformly to be that $|\vec q_k(\cdot)|<\eta$ uniformly for $k$ large enough, we will obtain similar results as in unbiased case with $G(\cdot)$ being replaced by its $2\eta$-neighborhood due to the bias term $\vbe_n$.

{\bf Part 3:
Proof of \eqref{ubi-2} and stability assertion.}
Let $Q$ be a compact set such that $\{\vX_n\}_{n=1}^{\infty}\subset Q$ and
$\M_Q$ be the largest invariant set contained in $Q$; and let
$\mathcal R_{2\eta}(Q)$ be the set of chain recurrence points of following differential inclusion restricted in $Q$
\begin{equation}\label{proof-6-eq-1}
\dot\vX(t) \in N_{2\eta}\left(\bar \vh(\vX(t))+G(\vX(t))\right),
\end{equation}
i.e., $\mathcal R_{2\eta}(Q)$ contains all $\boldsymbol{\theta}$ satisfying that for any $\eps >0$, $T>0$, there are an integer $n$, real numbers $t_1,\dots,t_n>T$,
and solutions $\vx_1(\cdot),\dots,\vx_n(\cdot)$ of \eqref{proof-6-eq-1} such that $\forall k=1,\dots,n$,
$$
\vx_k(0)\in \M_Q,\quad
|\vx_1(0)-\boldsymbol{\theta}|<\eps,\quad |\vx_k(t_k)-\vx_{k+1}(0)|\leq \eps,\quad |\vx_n(t_n)-\boldsymbol{\theta}|\leq \eps.
$$
To proceed, we need the following lemma, whose proof can be found in \cite[Lemma 5.1]{TD17}, which used the continuation of chain-recurrent set developed in \cite[Theorem 3.1]{BHS12}.

\begin{lem}\label{proo-6-lem-1} $($Continuation of chain recurrent set$)$
	There exists a function $\phi(\cdot):[0,\infty)\to[0,\infty)$ $($depending on $Q$ and $\mathcal R)$ such that $\phi(\cdot)$ is non-decreasing with $\lim_{t\to0}\phi(t)=\phi(0)=0$ and $\mathcal R_{2\eta}(Q)\subset N_{\phi(\eta)}(\mathcal R)$, where $N_{\phi(\eta)}(\mathcal R)$ is a $\phi(\eta)$-neighborhood of $\mathcal R$.
\end{lem}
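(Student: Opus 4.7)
The natural candidate is
\[
\phi(\eta):=\sup\big\{\mathrm{distance}(\vx,\mathcal R)\,:\,\vx\in\mathcal R_{2\eta}(Q)\big\}\ \text{for }\eta>0,\qquad \phi(0):=0,
\]
possibly replaced by its non-decreasing envelope at the end. The claim then splits into three pieces: finiteness, monotonicity, and the vanishing limit at $0$. Finiteness is immediate because $\mathcal R_{2\eta}(Q)\subset Q$ with $Q$ compact, so distances to $\mathcal R$ are bounded. Monotonicity is also immediate: if $\eta_1\le\eta_2$, then for every $\vx$ the inclusion $N_{2\eta_1}(\bar\vh(\vx)+G(\vx))\subset N_{2\eta_2}(\bar\vh(\vx)+G(\vx))$ holds, so every solution of the $2\eta_1$-perturbed differential inclusion is a solution of the $2\eta_2$-perturbed one, the largest invariant set inside $Q$ can only grow, and hence $\mathcal R_{2\eta_1}(Q)\subset\mathcal R_{2\eta_2}(Q)$, giving $\phi(\eta_1)\le\phi(\eta_2)$. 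If $\phi$ is not already non-decreasing after these steps it can be replaced by $\tilde\phi(\eta):=\sup_{0\le s\le\eta}\phi(s)$ without affecting the other properties.

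The substance is to prove $\phi(\eta)\to 0$ as $\eta\to 0^+$. The plan is to treat the family
\[
F_\eta(\vx):=N_{2\eta}\bigl(\bar\vh(\vx)+G(\vx)\bigr),\qquad \eta\ge 0,
\]
as a continuous one-parameter deformation of the base set-valued dynamics $F_0=\bar\vh+G$ and invoke the continuation theorem for chain-recurrent sets in \cite[Theorem 3.1]{BHS12}. Its hypotheses must be verified for our $F_\eta$: each $F_\eta$ has non-empty, convex, compact values in a common ball (clear, using Proposition \ref{prop63} to ensure closed convex hulls behave well); each $F_\eta$ has closed graph (follows from the closed graph of $G$, continuity of $\bar\vh$, and the closure in the $2\eta$-neighborhood); and $F_\eta$ converges to $F_0$ as $\eta\downarrow 0$ in the sense required by \cite{BHS12}, which is clear because $d_H\bigl(F_\eta(\vx),F_0(\vx)\bigr)\le 2\eta$ pointwise in $\vx$. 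The conclusion of the continuation theorem yields upper semicontinuity at $\eta=0$ of the map $\eta\mapsto\mathcal R(F_\eta)$, i.e., for every $\eps>0$ there is $\eta_0>0$ with $\mathcal R(F_\eta)\subset N_\eps(\mathcal R(F_0))$ for all $\eta<\eta_0$. Combining this with the trivial inclusion $\mathcal R_{2\eta}(Q)\subset\mathcal R(F_\eta)\cap Q$ (our set is defined by a more restrictive $\eps$-chain condition that forces the initial points into $\M_Q$, and thus is smaller than the classical chain-recurrent set) gives $\phi(\eta)\to 0$.

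The main obstacle is the book-keeping needed to reconcile the restricted chain-recurrence condition (with $\vx_k(0)\in\M_Q$ and the orbits constrained to $Q$) with the unrestricted chain-recurrence notion for which \cite[Theorem 3.1]{BHS12} is stated. I would handle this by first applying the continuation theorem to the restriction of $F_\eta$ to the compact invariant trapping region around $Q$ (using boundedness of $G$ and of $\bar\vh$ on $Q$ to enlarge $Q$ slightly into a forward-invariant compact set on which the perturbed flows are well defined for small $\eta$), and then observing that adding the extra requirement $\vx_k(0)\in\M_Q$ only shrinks the chain-recurrent set, so the upper-semicontinuity bound carries over. A secondary, minor point is to check that $\lim_{\eta\to 0}\phi(\eta)=0$ indeed yields $\phi(0)=0$ as well (trivial from the definition) and that the non-decreasing envelope construction preserves this limit, which it does by monotonicity.
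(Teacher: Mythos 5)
Your proposal is correct and follows essentially the same route as the paper, which does not prove this lemma directly but cites \cite[Lemma 5.1]{TD17}, itself resting on the continuation theorem for chain-recurrent sets in \cite[Theorem 3.1]{BHS12} — exactly the tool you invoke. Your reconstruction (defining $\phi$ as a supremum of distances, checking finiteness and monotonicity from the nesting of the inflated dynamics, and obtaining the vanishing limit from upper semicontinuity of the chain-recurrent set at $\eta=0$) fills in the details the paper delegates to those references.
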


It is similar to the arguments in the unbiased case, we obtain that
the limit points of $\{\vX_n\}$ are contained in $\mathcal R_{2\eta}(Q)$.
On the other hand,
applying Lemma \ref{proo-6-lem-1}, we obtain that $\mathcal R_{2\eta}(Q)\subset  N_{\phi(\eta)}(\mathcal R)$.
Therefore, we conclude our results.
Finally, the stability assertion is similar to that of unbiased case,
which turns out to be the study of the stability of the limit differential inclusion.

\end{proof}

\section{Rates of Convergence}\label{sec:rate}
This section is concerned with the rates of convergence of the stochastic approximation algorithms.
 One of the new features of our work is that
stochastic differential inclusions are used in the rate of convergence study for the first time.

For simplicity, we consider the following algorithm
\begin{equation}\label{eq-algo-r}
\vX_{n+1}=\vX_n+a_n\vh(\vX_n,\xi_n)+a_n\vb_n(\vX_n), \  \ \vb_n (\vX_n ) \in G (\vX_n)
.\end{equation}
We assume that the limit dynamical system has
a global stable limit point $\vx^*$. The rate of convergence is focused on the asymptotic behavior of
$
\vU_n:=\frac{\vX_n-\vx^*}{\sqrt {a_n}}.
$
Let $\vU^0(\cdot)$ be the piecewise constant interpolation of $\{\vU_n\}$,
 and $\vU^n(\cdot)$ be its shifted process, i.e.,
$$
\vU^0(t):=\vU_n\text{ if }t\in [t_n,t_{n+1});\text{ and }\vU^n(t):=\vU^0(t_n+t),\;t\geq 0.
$$
We
state only the results for unconstrained case with assumption on boundedness of $\{\vX_n\}$. The projection case is similar with a slight modification.
We assume following assumption.

\begin{enumerate}[label=\textbf{(R)}]
	\item\label{R}
	\begin{itemize}[leftmargin=*,align=left]
	\item[]{\rm (i)}
	The sequence of step sizes  $\{a_n\}_{n\geq 0}$ satisfies $0< a_n \to 0$ as $n\to \infty$ and
	$(a_n/a_{n+1})^{1/2}=1+\eps_n$ where
	{\rm(a)} $\eps_n=\frac 1{2n}+o(\eps_n)$ if $a_n= 1/n$,
	or
	{\rm(b)} $\eps_n=o(a_n)$.
	\item[]{\rm (ii)}
	There is a limit point $\vx^*$ satisfying the following conditions:
	{(a)}
	$\vX_n\to \vx^*$ w.p.1
	and $\bar \vh(\vx^*)+G(\vx^*)=\{\vec 0\}$;
	{(b)}
	$\{(\vX_n-\vx^*)/\sqrt{a_n}\}$ is tight.
	\newcommand{\cggg}{{\mathcal G}}
	\newcommand{\chhh}{{\mathcal H}}
	\item[]{\rm (iii)}
	The functions $\vh(\cdot,\cdot)$ and  $\vh_\vx(\cdot,\cdot)$ (gradient with respect to $\vx$) are continuous in $(\vx,\xi)$ and bounded on bounded $\vx$-sets. The second partial derivative (with respect to $\vx$) $\vh_{\vx \vx}(\cdot,\xi)$ exists and is bounded uniformly in $\xi$, and
	$\vh_{\vx\vx}(\cdot, \xi)$ is continuous in a neighborhood of $\vx^*$. The $\{\xi_n\}$  is a sequence of uniformly bounded and stationary uniform mixing process satisfying that:
	$\E \vh(\vx,\xi_n)= \bar \vh(\vx)$ and $\E \vh_\vx(\vx, \xi_n)=
	\bar \vh_\vx(\vx)$.
	Let \bea \ad \psi_n=\vh(\vx^*,\xi_n)- \bar \vh(\vx^*),\ \wdt \psi_{n}=\vh_\vx(\vx^*,\xi_n)-\bar \vh_\vx(\vx^*), \\
	\ad \sf \cggg_n=\sigma \{\psi_j; j\le n\},\  \cggg^n=\sigma
	\{\psi_j; j\ge n\},\
	\chhh_n=\sigma \{\psi_{j}; j\le n\},\  \chhh^n=\sigma
	\{\psi_{j}; j\ge n\},\\
	\ad \phi(m)=\sup_{\A\in
		\cggg^{n+m}}|\PP(\A|\cggg_n)-\PP(\A)|_{\infty},\
	\tilde \phi(m)=\sup_{\A\in
		\chhh^{n+m}}|\PP(\A|\h_n)-\PP(\A)|_{\infty},\\
	\eea
	For some $\Delta>0$,
	$\sum_j \phi^{\Delta\over {1+\Delta}}(j) <\infty, \ \sum_j \wdt\phi^{\Delta\over {1+\Delta}}(j) < \infty.
	$

	\item[]{\rm (iv)}
	The set-valued mapping $G(\cdot)$ has non-empty, convex, and compact values, which are contained in a finite common ball such that
	$
	\vb_n(\vx)\in G(\vx)\;\forall n.
	$
	Moreover,
	there is a continuous and positively homogeneous set-valued mapping $T$, whose values are non-empty, convex, compact, and contained in a finite common ball such that $G$ is outer $T$-differentiable at $\vx^*$ (see Section \ref{sec:set} for these concepts).
	\end{itemize}
\end{enumerate}

\begin{rem}\label{rem:weak-conv}
	Condition \ref{R}(i) covers commonly used step sizes $\{a_n\}$.
	Because our main interest here is on the rate of convergence, we simply assume the convergence of $\vX_n$ to $\vx^*$.
	For simplicity of presentation and as a division of labor, we assume
	the tightness of $\{\frac{\vX_n-\vx^*}{\sqrt{a_n}}\}$
	in \ref{R}(ii).  Sufficient conditions ensuring the tightness are given at the end of this section and presented as Proposition \ref{prop:tight}.
Regarding {\rm \ref{R}(iii)}, we use the notation as in \cite[Chapter 7, pp. 345-346]{EK86}.
That is, $|\cdot|_p$ denotes the $p$-norm for $L^p(\Omega, {\cal F}, {\mathbb P})$ with $1\le p\le \infty$.
	It can be shown (see \cite{Yin91}) that
	\begin{itemize}
		\item[]{(a)}
		$\sum_{i=n}^{m(t_n+ \cdot)-1}\sqrt {a_i} [\vh(\vx^*, \xi_i)-\bar\vh(\vx^*)]$ converges weakly to a Brownian motion $\vW (\cdot)$ with covariance $\Sigma_1 t$ as $n\to \infty$,  and
		\item[]{(b)} $\sum_{i=n}^{m(t_n+ t)-1}a_i \vh_\vx (\vx^*, \xi_i)$ converges in probability to  $\bar\vh_\vx(\vx^*):=A $  as  $n\to \infty.$
	\end{itemize}
\end{rem}

\begin{thm}\label{mth-7}
	Consider
	algorithm \eqref{eq-algo-r} and assume Assumption
	{\rm\ref{R}} holds.
	Then $\{\vU^n(\cdot)\}$ converges weakly to the
		solutions of the following stochastic differential inclusion $($see Section {\rm\ref{sec:sdi}} for the definitions$)$
	\begin{equation}
	d\vU(t)\in\big[A\vU(t)+ T(\vU(t))\big]dt+\Sigma_1^{1/2}d\bar \vW(t),
	\end{equation}
	if {\rm \ref{R}(i)(a)} holds, and
	\begin{equation}
	d\vU(t)\in\big[(A+I/2)\vU(t)+ T(\vU(t))\big]dt+\Sigma_1^{1/2}d\bar \vW(t),
	\end{equation}
	if {\rm \ref{R}(i)(b)} holds, where
	$\bar \vW(t)$ is a $d$-dimensional standard Brownian motion.
\end{thm}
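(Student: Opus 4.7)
The approach is to Taylor-expand the algorithm around $\vx^*$, derive a set-valued linearized recursion for $\vU_n=(\vX_n-\vx^*)/\sqrt{a_n}$, interpolate to a continuous-time process in $D^d[0,\infty)$, and identify a weak limit which is characterized as a solution of the stated stochastic differential inclusion. The key tool in the last step is the closure-of-solutions technique for set-valued systems already used in Propositions~\ref{prop-3.2-F} and \ref{prop63}, combined with the weak convergence of a martingale-like noise to $\Sigma_1^{1/2}\bar\vW(\cdot)$.

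First I would derive the recursion. From \eqref{eq-algo-r} and $(a_n/a_{n+1})^{1/2}=1+\eps_n$,
\[
\vU_{n+1}=(1+\eps_n)\vU_n+(1+\eps_n)\sqrt{a_n}\bigl[\vh(\vX_n,\xi_n)+\vb_n(\vX_n)\bigr].
\]
By \ref{R}(iii), Taylor-expand $\vh(\vX_n,\xi_n)=\vh(\vx^*,\xi_n)+\vh_\vx(\vx^*,\xi_n)(\vX_n-\vx^*)+O(|\vX_n-\vx^*|^2)$, and use $\vh(\vx^*,\xi_n)=\bar\vh(\vx^*)+\psi_n$, $\vh_\vx(\vx^*,\xi_n)=A+\wdt\psi_n$. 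For the set-valued term, \ref{R}(iv) provides outer $T$-differentiability of $G$ at $\vx^*$; together with positive homogeneity $T(\sqrt{a_n}\vU_n)=\sqrt{a_n}\,T(\vU_n)$ this yields $\vb_n(\vX_n)\in G(\vx^*)+\sqrt{a_n}\,T(\vU_n)+o(\sqrt{a_n}|\vU_n|)\bar B$. Since $\bar\vh(\vx^*)+G(\vx^*)=\{\vec 0\}$, the singular $O(1/\sqrt{a_n})$ contributions cancel, leaving
\[
\vU_{n+1}\in(1+\eps_n)\vU_n+\sqrt{a_n}(1+\eps_n)\psi_n+a_n(1+\eps_n)[A\vU_n+T(\vU_n)]+a_n(1+\eps_n)\wdt\psi_n\vU_n+r_n,
\]
with a remainder $r_n$ controlled by the tightness of $\{\vU_n\}$ from \ref{R}(ii)(b).

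Next I iterate this recursion to obtain an integral representation for the shifted interpolation $\vU^n(\cdot)$. By Remark~\ref{rem:weak-conv}(a), $\vec M^n(t):=\sum_{i=n}^{m(t_n+t)-1}\sqrt{a_i}\,\psi_i$ converges weakly to $\Sigma_1^{1/2}\bar\vW(\cdot)$, and by Remark~\ref{rem:weak-conv}(b) the sums $\sum a_i \vh_\vx(\vx^*,\xi_i)$ converge in probability to $At$, so after summation by parts the $\wdt\psi_i\vU_i$ contribution is negligible by mixing and tightness of $\vU_n$. The $\eps_n$ factors yield different deterministic drift in the two sub-cases of \ref{R}(i): in case (a), $\sum_{i=n}^{m(t_n+t)-1}\eps_i=o(1)$, while in case (b) the sum behaves like $\tfrac12\int_0^t \vU^n(s)\,ds$, producing the extra $(I/2)\vU$ drift. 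Collecting, for $t\le T$,
\[
\vU^n(t)-\vU^n(0)\in\int_0^t\bigl[(A+cI)\vU^n(s)+T(\vU^n(s))\bigr]ds+\vec M^n(t)+o(1),
\]
with $c=0$ in case (a) and $c=1/2$ in case (b).

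Tightness of $\{\vU^n(\cdot)\}$ in $D^d[0,\infty)$ then follows from \ref{R}(ii)(b) combined with a set-valued Gr\"onwall estimate on the displayed integral inequality (using that $T$ takes values in a common ball). Passing to a weakly convergent subsequence via Skorohod representation, the joint limit $(\vU(\cdot),\bar\vW(\cdot))$ must satisfy the claimed stochastic differential inclusion, provided the set-valued integral passes to the limit. This is exactly the closure argument of Propositions~\ref{prop-3.2-F} and \ref{prop63}, applied with $F=T$ using the continuity of $T$ and the identity $\cap_{\eps>0}\co\,T(\vu+\eps B)=T(\vu)$. The main obstacle I foresee is this last step: one must combine the deterministic closure-of-solutions machinery for differential inclusions with the weak convergence of the martingale part $\vec M^n(\cdot)$, so that a measurable selection $\vb_n(\vX_n)\in G(\vX_n)$ gives, in the limit, an integrable selection of $T(\vU(\cdot))$ compatible with the SDI solution concept of Section~\ref{sec:sdi}; the compact-convex values and continuity of $T$ guaranteed by \ref{R}(iv) are precisely what make this reduction go through.
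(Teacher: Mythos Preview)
Your overall strategy coincides with the paper's: Taylor-expand around $\vx^*$, use outer $T$-differentiability and positive homogeneity of $T$ to linearize the set-valued part, invoke Remark~\ref{rem:weak-conv} for the noise, establish tightness, pass to a Skorohod representation, and identify the limit as an SDI solution. The recursion you write is essentially the paper's \eqref{eq-un}.

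The genuine gap is precisely where you flag it. Propositions~\ref{prop-3.2-F} and~\ref{prop63} are deterministic closure results; they do not by themselves give closure under weak convergence when a stochastic integral is present, and ``combining'' them with the martingale convergence is the whole difficulty. The paper does \emph{not} adapt those propositions here. Instead it uses Kisielewicz's sequential weak-closure theorem for SDI solution sets (Lemma~\ref{prof-27-lem1}, from \cite{Kis01}): if the drift and diffusion maps are bounded, convex-valued and \emph{continuous}, then the set of weak solutions is closed under convergence in distribution. To put itself in a position to apply this, the paper (i) after Skorohod representation, replaces the approximating noise by the limiting one via $\bar\vU^n:=\vU^n-\vy_n-\vW^n+\vW$, so that all $\bar\vU^n$ are driven by the \emph{same} Brownian motion; (ii) fattens the drift to $\bar M_\delta(\vx):=T(\vx)+\delta(2+\|A\|+|\vx|)\bar B$, which absorbs both the $o(|\vU_n|)$ error from outer $T$-differentiability and the $|\bar\vU^n-\vU^n|$ discrepancy, and is shown to be continuous (Lemma~\ref{prof-27-lem2}); (iii) applies Kisielewicz to conclude that the limit $\vU$ solves the SDI with drift $A\vU+\bar M_\delta(\vU)$ for every $\delta>0$; and (iv) intersects over $\delta\downarrow 0$, using $\cap_k\bar M_{1/k}(\vx)=T(\vx)$ together with Proposition~\ref{lem-41}, to recover the drift $T(\vU)$. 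Your proposed route via $\cap_{\eps>0}\co\,T(\vu+\eps B)=T(\vu)$ is morally the intersection step~(iv), but without an SDI closure theorem replacing step~(iii) the argument does not close; and note that Kisielewicz's theorem needs continuity of the set-valued drift, which is why the $\delta$-fattening is not optional.

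One minor point: your reasoning for which step-size case produces the $I/2$ drift is inverted. When $a_n=1/n$ one has $\eps_n\approx a_n/2$, so $\sum_{i=n}^{m(t_n+t)-1}\eps_i\vU_i\approx\tfrac12\int_0^t\vU^n(s)\,ds$ and the $I/2$ term appears; when $\eps_n=o(a_n)$ this sum is negligible and no $I/2$ arises.
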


\begin{rem}{\rm
The main difficulties in deriving the result come from the lack of continuity of $\vb_n(\cdot)$ and the handling of the set-valued mappings, provided
the normalized noise terms converge (in distribution) to a Wiener process.
	Although we only state and prove the rate of convergence results for a simple algorithm,
	similar results for general algorithms  can
	also be obtained with
	modifications.
}\end{rem}

\begin{proof}[Proof of Theorem \ref{mth-7}.]
Define $\vW^n(\cdot)$ on $(-\infty,\infty)$ by
$$
 \vW^n(t)=\begin{cases}\disp
\sum_{i=n}^{m(t_n+t)-1}\sqrt {a_i}[\vh(\vx^*,\xi_i)-\bar\vh(\vx^*)]\text{ if }t\geq 0,\\
\disp -\sum_{i=m(t_n+t)}^{n-1}\sqrt{a_i}[\vh(\vx^*,\xi_i)-\bar\vh(\vx^*)]\text{ if }t\leq 0.
\end{cases}
$$
It is similar to \cite[Theorem 10.2.1]{KY03} (see also \cite{Yin91}) that $\{(\vU^n(\cdot),\vW^n(\cdot))\}$ is tight in $D^d[0,\infty)\times D^d(-\infty,\infty)$.
Hence, we can extract a convergent subsequence (still denoted by $\{(\vU^n(\cdot),\vW^n(\cdot))\}$) such that $\{(\vU^n(\cdot),\vW^n(\cdot))\}$ converges weakly to a limit, denoted by $(\vU(\cdot),\vW(\cdot))$.
First, Remark \ref{rem:weak-conv}
 yields that $\vW(t)$ is a Wiener process with covariance matrix $\Sigma_1 t$.
For simplicity of notation, we  also assume that the sequence $\{\vU_n\}$ is bounded
and suppress the truncation step
 (see e.g., \cite[Theorem 10.2.1]{KY03}.
  Note that the difficulty in proving Theorem \ref{mth-7} comes from the discontinuity of $\vb_n(\cdot)$ and the appearance of set-valued mapping $G(\cdot)$. However, this term is assumed to be bounded. Thus, we only need to use the truncated process to handle the smooth term $\vh(\cdot)$, which is the reason that the similar truncation step in \cite[Theorem 10.2.1]{KY03} is valid here.
To proceed, we work with
the case
\ref{R}(i)(b); the case \ref{R}(i)(a) can be handled similarly and is thus omitted.

To proceed, we have that
\begin{equation}\label{eq-un}
\begin{aligned}
\vU_{n+1}&=\Big(\frac {a_n}{a_{n+1}}\Big)^{1/2}\Big\{\vU_n+\sqrt{a_n}\big(\bar \vh(\vx^*)+\vb_n(\vX_n)+\vh_\vx (\vx^*,\xi_n)(\vX_n-\vx^*)+a_nO(|\vU_n|^2)\big)\\
&\hspace{2.5cm}  +\sqrt{a_n}[\vh(\vx^*, \xi_n)- \bar \vh(\vx^*)]\Big\}\\
&=\vU_n+\Big(\Big(\frac {a_n}{a_{n+1}}\Big)^{1/2}-1\Big)\vU_n
+\Big(\frac {a_n}{a_{n+1}}\Big)^{1/2}\Big\{a_n\vh_\vx(\vx^*,\xi_n)\vU_n
+a_n\vec v_n(\vU_n)\\
&\hspace{2.5cm} +a_n^{3/2}O(|\vU_n|^2)  +\sqrt{a_n}[\vh(\vx^*, \xi_n)- \bar \vh(\vx^*)]\Big\},
\end{aligned}
\end{equation}
where
$$\vec v_n(\vU_n):=\frac{\bar \vh(\vx^*)+\vb_n(\vX_n)}{\sqrt{a_n}}.$$


Let $\delta\in (0,1)$ be fixed and otherwise arbitrary.
Since $G(\cdot)$ is outer $T$-differentiable at $\vx^*$, there is a neighborhood $V$ of $\vx^*$ such that \eqref{eq-otd} holds, i.e.,
$$
G(\vx)\subset G(\vx^*)+T(\vx-\vx^*)+\delta |\vx-\vx^*|B\text{ for all }\vx\in V.
$$
Since $\vX_n$ tends to $\vx^*$ w.p.1 (Assumption \ref{R}(ii)(a)),
for $n$ large enough, we have that
\begin{equation}
\begin{aligned}
\vh(\vx^*)+\vb_n(\vX_n)&\in \vh(\vx^*)+G(\vX_n)\\
&\subset \vh(\vx^*)+ G(\vx^*)+T(\vX_n-\vx^*)+\delta |\vX_n-\vx^*|B\\
&\subset T(\vX_n-\vx^*)+\delta |\vX_n-\vx^*|B.
\end{aligned}
\end{equation}
As a consequence,
\begin{equation*}
\vec v_n(\vU_n)\in \frac 1{\sqrt{a_n}}T(\vX_n-\vx^*)+\delta \frac{|\vX_n-\vx^*|}{\sqrt{a_n}} B=T(\vU_n)+\delta |\vU_n|B,
\end{equation*}
where we have used the fact that $T$ is positively homogeneous (in Assumption \ref{R}(iv)).
Let
\begin{equation*}
M_\delta(\vx):=T(\vx)+\delta |\vx|\bar {B}.
\end{equation*}
Then one has
\begin{equation}\label{27-eq0}
\vec v_n(\vU_n)\in M_\delta(\vU_n).
\end{equation}
Hence, from \eqref{eq-un}, \eqref{27-eq0}, $\left(a_n/a_{n+1}\right)^{1/2}=1+o(a_n)$, and $\sum_{i=n}^{m(t_n+ t)-1}a_i \vh_\vx (\vx^*, \xi_i)$ converges in probability to  $\bar\vh_\vx(\vx^*):=A $ in Remark \ref{rem:weak-conv}, by the same argument as in the proofs of previous theorems,
we  obtain that for $n$ large enough
\begin{equation}\label{27-eq10}
 \vU^n(t)- \vU^n(s)\in\int_s^t\left( A\vU^n(r)+M_{\delta}(\vU^n(r))\right)dr+\vy_n(t)-\vy_n(s)+\vW^n(t)-\vW^n(s),
\end{equation}
where $\vy_n(\cdot)$ is some process converging to  zero  and $\vW^n(\cdot)$ converges to $\vW(\cdot)$ weakly.
Using the Skorohod representation theorem \cite[Chapter 3, Theorem 1.8]{EK86} but without changing notation, we can assume $\vy_n(\cdot)+\vW^n(\cdot)$ converges to $\vW(\cdot)$ w.p.1.
Let $\delta_1\in(0,\delta)$ (depending on $\delta$) be such that
\begin{equation}\label{27-16-10-1}
T(\vx+\delta_1B)\subset T(\vx)+\delta B.
\end{equation}
Such $\delta_1$
always exists since $T$ is continuous.
Because of the convergence of $\vy_n(\cdot)+\vW^n(\cdot)$ to $\vW(\cdot)$, we have that on bounded intervals, for $n$ large, $|\vy_n(\cdot)+\vW^n(\cdot)-\vW(\cdot)|\leq \delta_1/2$.
As a consequence, if we let
$$\bar \vU^n(\cdot):=\vU^n(\cdot)-\vy_n(\cdot)-\vW^n(\cdot)+\vW(\cdot),$$
then on bounded intervals, for $n$ large, $|\bar \vU^n(\cdot)-\vU^n(\cdot)|\leq \delta_1/2$,
which together with \eqref{27-eq10} implies that for $s,t$ in bounded intervals, for $n$ large,
\begin{equation}\label{27-16-10-2}
\begin{aligned}
\bar \vU^n(t)-\bar \vU^n(s)&\in\int_s^t\left( A\vU^n(r)+M_\delta(\vU^n(r))\right)dr+\vW(t)-\vW(s)\\
&\subset \int_s^t\left( A\bar \vU^n(r)+\bar M_\delta(\bar \vU^n(r))\right)dr+\vW(t)-\vW(s),
\end{aligned}
\end{equation}
where
\begin{equation}\label{27-eqm}
\bar M_\delta(\vx):=T(\vx)+\delta \left(2+\|A\|+|\vx|\right)\bar {B},
\end{equation}
and
in \eqref{27-16-10-2},
 we have used the following facts:
$$
|A\vx-A\vy|\leq \|A\||\vx-\vy|, \|A\|\text{ is the sup-norm of } A,
$$
and if $|\vx-\vy|< \delta_1$ then
$
T(\vy)\subset T(\vx)+\delta B,
$
due to \eqref{27-16-10-1}.
It indicates that
on bounded intervals,
$
\bar \vU^n(\cdot)
$ (for $n$ large)
is a solution of
\begin{equation}\label{1111-barU}
d\bar \vU^n(t)\in \big[A\bar \vU^n(t)+\bar M_\delta(\bar \vU^n(t))\big]dt+\Sigma_1^{1/2}d\bar \vW(t),
\end{equation}
where $\bar \vW(t)$ is a $d$-dimensional standard Wiener process.

To proceed, we  state in Lemma \ref{prof-27-lem1} sufficient conditions for the weak compactness of the set of solutions of stochastic differential inclusions by Kisielewicz in \cite{Kis01}(see also \cite{Kis05,Kis13}). Then in Lemma \ref{prof-27-lem2}, we verify these conditions.

\begin{lem}\label{prof-27-lem1} $($see \cite[Theorem 12]{Kis01}$)$.
Consider the stochastic differential inclusion
\begin{equation}\label{1111-FG}
d\vX(t)\in F_1(\vX(t))dt+F_2(\vX(t))d\bar \vW(t).
\end{equation}
Assume that set-valued mappings $F_1:\R^d\to 2^{\R^d}$, $F_2:\R^d\to 2^{\R^{d\times d}}$ are measurable and bounded, and have convex values, where $F_2$ has convex values
 in the sense of that
 $\{gg^\top: g\in F_2(\vx)\}$ is convex for each $\vx\in \R^d$; and that $F_1,F_2$ are continuous $($see Section \ref{sec:set} for the definition$)$.
Then, for any initial distribution, the set of solutions to \eqref{1111-FG}
 is sequentially $($weakly$)$ closed with respect to the convergence in distribution.
\end{lem}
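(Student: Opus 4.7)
The plan is to take an arbitrary sequence $\{\vX_n\}$ of solutions of \eqref{1111-FG} with $\vX_n$ converging in distribution to some limit $\vX$ and to verify that $\vX$ is itself a solution. By the very definition of a solution to a stochastic differential inclusion, each $\vX_n$ is accompanied by a filtered probability space, a standard Wiener process $\vW_n$, and measurable selections $f_1^n(t) \in F_1(\vX_n(t))$, $f_2^n(t) \in F_2(\vX_n(t))$ with $\vX_n(t) = \vX_n(0) + \int_0^t f_1^n(s)\,ds + \int_0^t f_2^n(s)\,d\vW_n(s)$. First I would establish tightness of the quadruple $(\vX_n,\vW_n,f_1^n,f_2^n)$, viewing the selections as random elements of $L^2([0,T];\R^d)$ and $L^2([0,T];\R^{d\times d})$ equipped with their weak topologies, where the uniform boundedness of $F_1,F_2$ supplies the necessary tightness. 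After invoking the Skorokhod representation theorem, and passing to a subsequence, I may assume that $\vX_n\to\vX$, $\vW_n\to\vW$ uniformly on compacts almost surely, and that $f_i^n\to f_i$ weakly in $L^2$ almost surely.

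The next step is to identify the weak $L^2$-limits as selections of the corresponding set-valued mappings along $\vX$. To see that $f_1(t)\in F_1(\vX(t))$ almost everywhere, I would apply Mazur's lemma to obtain strong (hence, along a further subsequence, pointwise a.e.) convergence of convex combinations of $\{f_1^n\}$, and then use the closedness of the graph of $F_1$ together with convexity of $F_1(\vx)$ in an $\varepsilon$-neighborhood argument analogous to Proposition \ref{prop63} to force the pointwise limit into $F_1(\vX(t))$. The same strategy applies to $f_2$, with the convexity hypothesis on the sets $\{gg^\top : g\in F_2(\vx)\}$ being exactly the form of convexity that lets Mazur combinations of diffusion coefficients be identified as selections of $F_2$ once they are paired with their transposes in the quadratic variation.

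The main obstacle, as is typical for stochastic differential inclusions, is passing to the limit in $\int_0^t f_2^n(s)\,d\vW_n(s)$ because the integrators themselves vary with $n$. My plan is to switch to the martingale-problem formulation: set $\vM_n(t) := \vX_n(t) - \vX_n(0) - \int_0^t f_1^n(s)\,ds$, which is a continuous martingale with quadratic variation $\int_0^t f_2^n(s)(f_2^n(s))^\top\,ds$. The almost sure convergence of $\vX_n$ together with the weak $L^2$-convergence of $f_1^n$ yields $\vM_n\to\vM := \vX - \vX(0) - \int_0^t f_1(s)\,ds$ in a sense strong enough to transfer the martingale property, while the convexity hypothesis on $\{gg^\top : g\in F_2(\vx)\}$ combined with Mazur identifies the quadratic variation of $\vM$ as $\int_0^t f_2(s)f_2(s)^\top\,ds$. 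A standard martingale representation theorem then produces, on a possibly enlarged filtered space, a Brownian motion $\bar\vW$ with $\vM(t) = \int_0^t f_2(s)\,d\bar\vW(s)$, so that $(\vX,\bar\vW,f_1,f_2)$ is a solution of \eqref{1111-FG} in the required sense.

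The technical point I expect to be hardest is the joint step of converting weak $L^2$-convergence of $\{f_2^n\}$ into an almost-everywhere selection statement in $F_2(\vX(\cdot))$ \emph{and simultaneously} preserving the correct quadratic variation of the limit martingale. The two convexity assumptions (convex values of $F_1$ and convexity of $\{gg^\top : g\in F_2(\vx)\}$) together with continuity of $F_1,F_2$ are precisely what make these two demands compatible; relaxing either appears to break the argument, which is why the hypotheses of the lemma are stated in exactly this form.
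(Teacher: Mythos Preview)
The paper does not supply its own proof of this lemma: it is quoted verbatim from Kisielewicz \cite[Theorem 12]{Kis01} and used as a black box inside the proof of Theorem~\ref{mth-7}. So there is no in-paper argument to compare against; the relevant benchmark is Kisielewicz's proof.

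That said, your outline is essentially the route taken in \cite{Kis01,Kis13}: tightness of the tuple $(\vX_n,\vW_n,f_1^n,f_2^n)$ with the selections viewed in weak $L^2$, Skorokhod representation, Mazur's lemma plus closed-graph/convexity to identify weak $L^2$ limits as selections of $F_i(\vX(\cdot))$, and a martingale-problem argument to pass the stochastic integral to the limit. You have also correctly isolated why the unusual convexity hypothesis on $\{gg^\top:g\in F_2(\vx)\}$ is needed---it is exactly what allows Mazur combinations of diffusion selections to be matched to the limiting quadratic variation. One point to tighten: after Skorokhod, ``$f_i^n\to f_i$ weakly in $L^2$ almost surely'' is not quite the right statement, since the $f_i^n$ live on different probability spaces before representation and the weak-$L^2$ compactness is in $L^2(\Omega\times[0,T])$; Kisielewicz handles this by working with the joint law and extracting weak limits in the product space, then disintegrating. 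Apart from that bookkeeping, your plan matches the cited proof.
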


\begin{lem}\label{prof-27-lem2}
For each $\delta>0$, $\bar M_\delta(\cdot)$ is continuous, where $\bar M_\delta(\cdot)$ was defined in \eqref{27-eqm}.
\end{lem}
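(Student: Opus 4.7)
The plan is to prove continuity in the Hausdorff metric (which, since all values in question are non-empty compact, is equivalent to the set-valued continuity used elsewhere in the paper). Writing $\bar M_\delta(\vx) = T(\vx) + r(\vx)\bar B$ with $r(\vx) := \delta(2 + \|A\| + |\vx|)$, the idea is to treat $\bar M_\delta$ as the Minkowski sum of two set-valued mappings that are each continuous, and then invoke the standard fact that Minkowski addition is non-expansive with respect to the Hausdorff distance, namely
\[
d_H(S_1 + S_2, S_1' + S_2') \le d_H(S_1, S_1') + d_H(S_2, S_2').
\]

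First, I would recall from Assumption \ref{R}(iv) that $T$ itself is continuous, so $\vx \mapsto T(\vx)$ contributes no difficulty. The task therefore reduces to proving that $\vx \mapsto r(\vx)\bar B$ is continuous. Since $r(\vx) \ge 2\delta > 0$ for every $\vx$, the set $r(\vx)\bar B$ is just the closed ball of radius $r(\vx)$ centered at the origin, and for any two nonnegative radii $r_1, r_2$ one has the elementary identity $d_H(r_1 \bar B, r_2 \bar B) = |r_1 - r_2|$. Combining this with the triangle inequality $|r(\vx) - r(\vy)| \le \delta \big||\vx| - |\vy|\big| \le \delta |\vx - \vy|$ shows that $\vx \mapsto r(\vx)\bar B$ is in fact Lipschitz in the Hausdorff metric with constant $\delta$.

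Putting the two pieces together through the Minkowski-sum estimate yields
\[
d_H\bigl(\bar M_\delta(\vx), \bar M_\delta(\vy)\bigr) \;\le\; d_H\bigl(T(\vx), T(\vy)\bigr) + \delta |\vx - \vy|,
\]
and as $\vy \to \vx$ the first term vanishes by continuity of $T$ and the second by construction, giving continuity of $\bar M_\delta$ at every $\vx$. A brief verification that $\bar M_\delta(\vx)$ has non-empty, compact, convex values (for Lemma \ref{prof-27-lem1}'s hypotheses) is also worth including: non-emptiness and compactness are immediate since $T(\vx)$ is non-empty compact and $r(\vx)\bar B$ is compact; convexity follows since $T(\vx)$ and $r(\vx)\bar B$ are convex and Minkowski sums of convex sets are convex.

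I do not expect any serious obstacle here; the only minor subtlety is making precise which notion of set-valued continuity is being used, and this is handled by pointing to the equivalence of Hausdorff continuity and joint upper/lower semicontinuity for mappings with non-empty compact values, as established in the set-valued analysis preliminaries in Section \ref{sec:set}.
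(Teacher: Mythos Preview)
Your proof is correct, but it takes a different route from the paper. The paper invokes the support-function criterion of Lemma~\ref{5.4-lem54}: since $\bar M_\delta(\vx)$ has convex compact values, continuity is equivalent to continuity of $\sigma(\vec p,\bar M_\delta(\vx))$ for every fixed $\vec p$, and a direct computation gives $\sigma(\vec p,\bar M_\delta(\vx)) = \sigma(\vec p,T(\vx)) + \delta(2+\|A\|+|\vx|)\,|\vec p|$, which is continuous because $T$ is. Your argument instead works directly in the Hausdorff metric, using non-expansiveness of Minkowski addition together with the Lipschitz estimate on $\vx\mapsto r(\vx)\bar B$. Both are short; the paper's version leans on a lemma already recorded in the appendix and yields the result in one line of support-function arithmetic, while your version is self-contained and in fact gives a slightly sharper quantitative bound (a modulus of continuity of the form $d_H(T(\vx),T(\vy)) + \delta|\vx-\vy|$), at the cost of appealing to the equivalence between Hausdorff continuity and upper/lower semicontinuity for compact-valued maps.
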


\begin{proof}
We  prove this lemma by using Lemma \ref{5.4-lem54}. Let $\vec p\in \R^d$ be arbitrary and consider the map $\sigma(\vec p,\bar M_\delta(\cdot))$, defined by $\sigma(\vec p,\bar M_\delta(\vx)):=\sup_{\vec a\in \bar M_\delta(\vx)} \vec p^\top \vec a$. We have
$$
\begin{aligned}
\sigma(\vec p,\bar M_\delta(\vx))
&=\sup_{\vec a_1\in T(\vx),\; \delta_2\in [0,\delta],\;\vec e \text { is the unit vector in }\R^d} \vec p^\top (\vec a_1+\delta_2(2+\|A\|+|\vx|)\vec e)\\
&=\sup_{\vec a_1\in T(\vx)}\vec p^\top \vec a_1+\delta(2+\|A\|+|\vec x|)|\vec p|\\
&=\sigma(\vec p,T(\vx))+\delta(2+\|A\|+|\vx|)|\vec p|.
\end{aligned}
$$
Since $T(\cdot)$ is continuous, $\sigma(\vec p,T(\cdot))$ is continuous. As a result, $\sigma(\vec p,\bar M_\delta(\cdot))$ is continuous and then, $\bar M_\delta(\cdot)$ is continuous.
\end{proof}

Since $\vU(\cdot)$ is the limit of $\vU^n(\cdot)$, it is also the limit of $\bar \vU^n(\cdot)$.
Hence, by Lemmas \ref{prof-27-lem1} and \ref{prof-27-lem2}, on bounded intervals, $\vU(\cdot)$ is such that
\begin{equation}\label{27-eq1}
d \vU(t)\in \big[A \vU(t)+\bar M_\delta(\vU(t))\big]dt+\Sigma_1^{1/2}d\bar \vW(t),\text{ for all }\delta>0.
\end{equation}
Because $\vU(\cdot)$ is a solution to \eqref{27-eq1}, by \cite[Lemma 1]{Kis01}, we deduce from \eqref{27-eq1} that for any bounded interval $[0,T_0]$ and for all $k\in\N$, there exists $f^k(\cdot)$ such that $f^k(\vx)\in \bar M_{1/k}(\vx)\;\forall \vx$ and for all $s<t\in[0,T_0]$,
$\vU(t)-\vU(s)+\vW(t)-\vW(s)=\int_s^t A\vU(r)dr+ \int_s^t f^k(\vU(r))dr,\text{ w.p.1}.
$
This yields that
\begin{equation}\label{27-eq2}
\vU(t)-\vU(s)+\vW(t)-\vW(s)=\int_s^t A\vU(r)dr+ \int_s^t f^k(\vU(r))dr,\;\forall k\in\N,\text{ w.p.1}.
\end{equation}
A consequence of \eqref{27-eq2} is that
\begin{equation}\label{27-eq2-11}
\vU(t)-\vU(s)+\vW(t)-\vW(s)-\int_s^t A\vU(r)dr\in \int_s^t
\bar M_{1/k}(\vU(r))dr,\;\forall k\in\N,\text{ w.p.1}.
\end{equation}
The $T(\vx)$ is non-empty, compact, and convex,
so is $\bar M_{1/k}(\vx)$.  It is readily seen that $\cap_{k\in\N}\bar M_{1/k}(\vx)=T(\vx), \forall \vx$.
Combining this fact together with \eqref{27-eq2-11} and Proposition \ref{lem-41}, we have that for all $s,t\in[0,T_0]$
$$
\vU(t)-\vU(s)+\vW(t)-\vW(s)-\int_s^t A\vU(r)dr\in \int_s^t
T(\vU(r))dr,\text{ w.p.1}.
$$
Therefore,
we have
$$
\vU(t)-\vU(s)\in \int_s^t \big[A\vU(r)+T(\vU(r))\big]dr+\int_s^t \Sigma_1^{1/2}d\bar \vW(r),\text{ w.p.1}.
$$
Equivalently, $\vU(\cdot)$ is a solution to
$$
d \vU(t)\in \big[A \vU(t)+T(\vU(t))\big]dt+\Sigma_1^{\frac 12}d\bar \vW(t).
$$
The proof is complete.
\end{proof}

\para{Tightness criteria of normalized sequence.}
In Assumption \ref{R}(ii), we assumed the tightness of the normalized sequence as a division of labor.
To end this section, we provide sufficient conditions for the tightness of sequence $\{\frac{\vX_n-\vx^*}{\sqrt{a_n}}\}$ for large $n$.
These conditions are essentially concerned with the stability of the limit point $\vx^*$.
We will obtain the tightness by adapting and modifying the perturbed Lyapunov functional method for differential inclusions.
Such a
method was first used in the treatment of partial differential
equations and stochastic analysis, and later on used for many different stochastic systems in \cite{KY03}.
Here, we modify this idea to treat our cases.
[The assumptions given below are not restrictive.
 In fact, in  many applications, $V(\vx)=|\vx|^2$ can be used as a simple but promising candidate, which is shown in Section \ref{sec:app}. In addition, locally quadratic Lyapunov functions (see \cite{KY03}) can also be considered.]
We state a proposition below. A sketch of the proof is relegated to the Appendix \ref{sec:provetight}.

\begin{prop}\label{prop:tight}
	Consider algorithm \eqref{eq-algo-r} with $\vec b_n(\vx)\in G(\vx),\forall n$, $G(\vx)$ is a set-valued mapping; and suppose $\vX_n$ is bounded and converges to $\vx^*$ w.p.1 and Assumption {\rm\ref{R}(iii)} holds. Assume that there is a function $V:\R^d\to \R$ such that
	\begin{itemize}
		\item  $V (\vx^*) = 0$, $V (\vx) > 0$ for each $\vx \in\R^d$, $\vx\neq \vx^*$,
		$V (\cdot)$ together with its partial derivatives up to the second order in $\vx$ is continuous;
		$|V_{\vx}(\vx)|^2 \leq K(1 + V (\vx))$, $V_{\vx\vx}(\cdot)$ is uniformly bounded;
		and $V(\vx)\geq c_0|\vx-\vx^*|^2+o(|\vx-\vx^*|^2)$ as $\vx\to\vx^*$ for some positive constant $c_0$; and
		\item there is a $\lambda > 0$ such that $\max\dot{\bar V}^{G+\bar {\bold h}}(\vx) \leq -\lambda V (\vx) \text { for }\vx\neq \vx^*$
	$($where $\dot{\bar V}^{G+\bar {\bold h}}$ is the set-valued derivative of $V$ with respect to the set $G+\bar {\bold h}$, see definition {\rm\ref{non-def-1}(iii)}$)$; and
		\item  for each $n$, each $\vx \in\R^d$ and each $\xi$,
		$|\vec b_n(\vx)|^2+|\bold h(\vx,\xi)|^2 \leq K(1 + V (\vx))$.
	\end{itemize}
Moreover, assume the sequence of step sizes $\{a_n\}$ satisfies either $a_n = \frac 1n$ and $\lambda> 1$,
or $a_n\to 0$, and for each $T > 0$
$$
\liminf_n\min_{n\geq i\geq m(t_n-T)}\frac{a_n}{a_i}=1,
$$
where $t_n,m(t)$ are defined at the beginning.
Then, there is an $N$ such that $\{\frac{\vX_n -
	\vx^*}{\sqrt{a_n}}; n\geq N\}$ is tight in $\R^d$.
\end{prop}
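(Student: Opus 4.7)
\textbf{Proof plan for Proposition \ref{prop:tight}.}

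The strategy is to adapt the perturbed Lyapunov function method (cf.\ the treatment in \cite{KY03}) to the differential inclusion setting, where the ordinary inner product $\langle V_\vx,\bar\vh+\vb\rangle$ is replaced by the set-valued derivative $\dot{\bar V}^{G+\bar\vh}$. The target estimate is the moment bound $\E V(\vX_n)\le C a_n$ for all sufficiently large $n$; combined with the quadratic lower bound $V(\vx)\ge c_0|\vx-\vx^*|^2+o(|\vx-\vx^*|^2)$ near $\vx^*$, the a.s.\ convergence $\vX_n\to\vx^*$, and Chebyshev's inequality, this yields tightness of $\{(\vX_n-\vx^*)/\sqrt{a_n}\}$.

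First, I would Taylor-expand $V$ along the recursion \eqref{eq-algo-r}:
\[
V(\vX_{n+1})=V(\vX_n)+a_n\langle V_\vx(\vX_n),\vb_n(\vX_n)+\vh(\vX_n,\xi_n)\rangle+a_n^2 R_n,
\]
where the uniform bound on $V_{\vx\vx}$ together with $|\vb_n|^2+|\vh|^2\le K(1+V)$ yields $|R_n|\le K'(1+V(\vX_n))$. Because $\vb_n(\vX_n)\in G(\vX_n)$ is a measurable selection, the set-valued dissipation hypothesis forces
\[
\langle V_\vx(\vX_n),\vb_n(\vX_n)+\bar\vh(\vX_n)\rangle\le\max\dot{\bar V}^{G+\bar\vh}(\vX_n)\le -\lambda V(\vX_n),
\]
so that only the fluctuating noise $\langle V_\vx(\vX_n),\vh(\vX_n,\xi_n)-\bar\vh(\vX_n)\rangle$ still needs to be averaged out.

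Second, I would construct a small perturbation $V^{(1)}_n$ (in the spirit of the method of perturbed test functions) as a tail sum of conditional expectations of the noise term over $\{\xi_i\}_{i\ge n}$ with $\vX_n$ frozen inside. The uniform mixing condition \ref{R}(iii) and the growth bound $|V_\vx|^2\le K(1+V)$ guarantee that the series defining $V^{(1)}_n$ converges absolutely, $|V^{(1)}_n|\le \alpha_n(1+V(\vX_n))$ with $\alpha_n\to 0$, and that the one-step conditional difference $\E[V^{(1)}_{n+1}\mid\F_n]-V^{(1)}_n$ cancels the noise contribution up to a remainder of size $O(a_n^2(1+V(\vX_n)))$. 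Setting $\widetilde V_n:=V(\vX_n)+V^{(1)}_n$, one then obtains the contraction-type recursion
\[
\E[\widetilde V_{n+1}\mid\F_n]\le(1-\lambda a_n)\widetilde V_n+K a_n^2(1+\widetilde V_n),\qquad n\ge N_0,
\]
valid for some $N_0$ large enough that $\vX_n$ lies in a prescribed neighbourhood of $\vx^*$ where the higher-order remainders are dominated.

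Third, I would iterate the above to extract $\E\widetilde V_n=O(a_n)$. For $a_n=1/n$ with $\lambda>1$, this is the classical Chung lemma. For the slowly varying case, using $\prod_{i=k}^{n-1}(1-\lambda a_i)\le \exp(-\lambda(t_n-t_k))$, the discrete convolution $\sum_{i=N_0}^{n-1}a_i^2\exp(-\lambda(t_n-t_{i+1}))$ is concentrated on a window of length $O(1/\lambda)$ preceding $t_n$; on that window the hypothesis $\liminf_n\min_{n\ge i\ge m(t_n-T)}a_n/a_i=1$ forces $a_i\sim a_n$, so the sum is $O(a_n)$. Together with $|V^{(1)}_n|=o(1)(1+V(\vX_n))$, this transfers to $\E V(\vX_n)\le C a_n$, hence $\E|\vX_n-\vx^*|^2=O(a_n)$ and the Chebyshev inequality completes the tightness argument. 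The main obstacle is the first step: one must be careful that the dissipation inequality holds for \emph{any} measurable selection $\vb_n(\vX_n)\in G(\vX_n)$, uniformly controlled by $\max\dot{\bar V}^{G+\bar\vh}$, so that the deterministic drift is genuinely bounded above by $-\lambda V(\vX_n)$ independently of which particular selection the algorithm has produced; after this is secured, the construction of $V^{(1)}_n$ and the iteration step follow the standard template of the perturbed Lyapunov method.
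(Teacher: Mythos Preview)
Your proposal is correct and follows essentially the same route as the paper's proof: Taylor-expand $V$ along the iterates, bound the drift term via $\max\dot{\bar V}^{G+\bar\vh}$ (which works because $V$ is $C^2$, so $\partial V$ is a singleton and the set-valued derivative reduces to $\{V_\vx(\vx)^\top\vec q:\vec q\in G(\vx)+\bar\vh(\vx)\}$), add a small perturbation to cancel the mixing noise, and iterate the resulting contraction to obtain $\E V(\vX_n)=O(a_n)$. The paper presents this as a sketch and defers the perturbation construction and the final exponential-approximation step to \cite[Theorem 10.4.2]{KY03}, whereas you spell out the tail-sum form of $V^{(1)}_n$ and the two step-size regimes explicitly; the only point the paper makes slightly more explicit is the preliminary localization (modifying the process on a set of probability at most $\nu$ so that $|\vX_n|\le\eps$ for all $n$), which you handle by shifting the origin to a large $N_0$.
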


\section{Applications}\label{sec:app}
In this section, we  apply our  results developed in previous sections to a number of application examples.

\subsection{Stochastic Sub-gradient Descent}\label{ssec:sub}
We begin with the description under a deterministic setup.
Suppose that we aim to find the minimizers of a loss function $L(\vec w)$, i.e.,
$\text{argmin}_{\vec w\in\R^d} L(\vec w).
$
If $L(\vec w)$ is continuously differentiable with respect to $\vec w$, the minimizer $\vec w^*$ is the solution of the equation
$
\nabla_{\vec w} L(\vec w)=0.
$
In this case, we can find the optimum
by gradient descent algorithms
as usual.
However, if $L(\vec w)$ is only strictly convex and not differentiable, we cannot define the gradient $\nabla_{\vec w}L(\vec w)$.
Rather, we define its sub-gradient $\partial{L(\vec w)}$  as
$
\partial{L(\vec w)}:=\left\{\vec m\in\R^d: L(\vec y)\geq L(\w)+\vec m^\top (\vec y-\vec w),\;\forall \vec y\in\R^d\right\}.
$
Hence, the minimizer $\w^*$  satisfies
$
\vec 0\in \partial L(\w^*).
$
The algorithm for the minimization is of the form
$
\vec w_{n+1}=\w_n-a_n \vec g_n,
$
for some $\vec g_n\in \partial L(\w_n)$.
Assume that $L(\cdot)$ can be decomposed into two
components,
one
satisfies certain smooth conditions
and the other verifies
convexity.
Then we can assume
$
\partial L(\w)=\bar \vh(\w)+G(\w),
$
where $\bar \vh$ is a continuous function and $G(\w)$ is a set-valued mapping. Our objective is
to find the minimizer $\w^*$
satisfying
$\vec 0\in \partial L(\w^*).
$

When noisy observations or measurements are involved,
$\partial L(\w_n)$ is often not available. As a result,
we use $\tilde{\vec g}_n$, which is an unbiased or biased estimator of $\partial L(\w_n)$.
With noisy observations or measurements,
 we can write the estimator of $\tilde{\vec g}_n$ as
\begin{equation}\label{tilde-g}
\tilde{\vec g}_n=\vb_n(\w_n,\xi_n)+\vh(\w_n,\zeta_n)+\vh_0(\wdt \zeta_n)+\vbe_n,
\end{equation}
where
$\vh(\cdot,\cdot)$ is
 a smooth (w.r.t. $\w$) function that will be averaged out
to $\bar \vh$ (or a neighborhood of $\bar \vh$  if it involves some bias term that is not asymptotically negligible),
$\vb_n(\cdot,\cdot)$ is bounded
 with values belonging to a set-valued function $G(\cdot)$,
$\xi_n$, $\zeta_n$, $\wdt\zeta_n$ are the
noises,
 and $\vh_0(\wdt \zeta_n)$ and $\vbe_n$
can be either averaged out
  or asymptotically bounded
by $\eta$ when the bias cannot be ignored.

Using \eqref{tilde-g}, we construct the
algorithm
\begin{equation} \label{eq-app-algo}
\w_{n+1}=\w_n-a_n\big[\vb_n(\w_n,\xi_n)+\vh(\w_n,\zeta_n)+\vh_0(\wdt \zeta_n)+\vbe_n\big],
\end{equation}
or its projected version
\begin{equation} \label{eq-app-proj}
\begin{cases}
\tilde {\w}_{n+1}=\w_n-a_n\big[\vb_n(\w_n,\xi_n)+\vh(\w_n,\zeta_n)+\vh_0(\wdt \zeta_n)+\vbe_n\big],\\
\w_{n+1}=\Pi_H(\tilde\w_{n+1}).
\end{cases}
\end{equation}
Then, under our conditions,  in algorithms \eqref{eq-app-algo} or \eqref{eq-app-proj},
 $\w_n$ converges w.p.1
to the minimizer $\w^*$.
We can also obtain robustness and rates of convergence of these algorithms by applying Theorems \ref{mth-6} and \ref{mth-7}.
Our proposed conditions are mild and can be verified.
The assumptions in the noises are mild and can be verified by many common noise sequences such as
i.i.d. sequences, martingale difference sequences, mixing noise, etc.
Note also that
the boundedness of non-smooth term $\vb$ and local boundedness of smooth term $\vh$ are often clear if we use projection algorithms and/or the noise does not make the iterates blow-up.
Only conditions for stability (such as {\rm \ref{S2}, \ref{G2}, \ref{H2}}) need to be verified carefully. However, it is shown later that many algorithms in the literature satisfy these conditions.
Some specific examples (e.g., Lasso algorithm for high-dimensional statistics, and Pegasos algorithm in support vector machine (SVM) classification) will be studied next  and some numerical results will be given in Section \ref{sec:num}.

\begin{rem}\label{rem:sub}
Note that stochastic sub-gradient descent algorithms
 are used often in machine learning community to minimize  a loss function in online learning in which the loss function can often be non-smooth.
When the number of
data in training set
 is large, because computational cost using exact sub-gradient is expensive, sampling or mini-batching computations are needed.
However, there was  no unified
approach
to analyze the convergence of stochastic sub-gradient descent algorithms, neither was there effort for handling algorithms with non-smooth loss functions.
 Most existing studies are based purely on establishing a kind of ``contraction estimate" (in the sense of in expectation); see e.g., \cite{Grim19,ST09,SZ13,RNV09} and references therein.
For example, convergence in expectation was proved in
\cite{Grim19,RNV09} and references therein; or the convergence in probability and almost surely of the sequence
$\{\min_{1\leq k\leq n}\|\w_k\|\}_{n=1}^\infty$
 were obtained in
\cite{NL14}.
Our effort here is to provide a new approach in analyzing the convergence, rates of convergence, robustness of stochastic sub-gradient algorithms, and other algorithms in non-smooth optimization by characterizing their behaviors using dynamical systems generated from differential inclusions and stochastic differential inclusions.
As a direct application of our results, if the corresponding differential inclusion has the minimizer as a globally asymptotically stable point, then we can obtain the almost surely convergence of the algorithm to the minimizer, which recover and/or improve the convergence results in \cite{Grim19,NL14,RNV09} and references therein.
The globally asymptotic stability can be verified by the use of a novel (and effective) Lyapunov functional method, which was presented
in Section \ref{sec:conv}.
The rates of convergence, robustness can also be deduced from our results.
\end{rem}

\begin{rem}
	Some other  variants of stochastic subgradient or gradient descent algorithms for non-smooth and/or non-convex optimization are studied widely recently including incremental sub-gradient descent  \cite{HP09,Kiw04}, proximal algorithms and stochastic proximal algorithms \cite{Nit14}, perturbed proximal primal dual algorithm \cite{HH19}, smoothing methods \cite{Che12}, gradient sampling methods \cite{BCL20}, among others.
Nevertheless, the central issue is the handling of set-valued mappings and nonsmooth loss functions.
Although we will not dwell on each of such algorithms, using our results we can treat such algorithms and obtain respective convergence results.
\end{rem}

\begin{rem}
In the next two sections, we present how our results can be applied to study algorithms in $L^1$-norm penalized (regularized) minimization and support vector machine (SVM) classification.
We will only focus on verifying the stability conditions since assumptions in the noises are mild and can be verified by many common noise sequences such as
i.i.d. sequences, martingale difference sequences, mixing noise, etc.
It is worth noting
that although we will not state explicitly the results for algorithms in $L^1$-norm regularized minimization in Section \ref{sec:app2} and SVM classification problem in Section \ref{sec:app3}, our results on convergence  (Theorems \ref{mth-2}, \ref{mth-4}, and \ref{mth-5}), robustness (Theorem \ref{mth-6}), and rates of convergence (Theorem \ref{mth-7}) hold for these algorithms.
These results recover, improve, and further the state-of-art development for 
Lasso and SVM algorithms.
\end{rem}

\subsection{$L^1$-norm Penalized (Regularized) Minimization: Lasso Algorithms, Least Absolute Deviation (LDA) Estimators}
\label{sec:app2}

We consider stochastic algorithms for minimizing loss functions containing $L^1$-norm, by
providing explicit computations for Lasso algorithm since other cases are similar.
Let us start with the following optimization problem.
Given a sequence of i.i.d. random variables $\{\vx_n,y_n\}$,
with $\vx_n\in\R^d,y\in\R$, we wish to find the weight vector $\w$ so that $\vx_n^\top \w$ best matches $y_n$ in the sense
$\E\|\vx^\top_n\w-y_n\|^2$ is minimized with the constraint $\sum_{i=1}^d|\text{w}_i|=0$, which can be recast into the following problem:
\begin{equation}\label{eq-Lw2}
\text{argmin}_{\w\in\R^d} L(\w),\quad
L(\w)=
\frac 12\E\|\vx^\top_n\w-y_n\|^2+\lambda \sum_{i=1}^d|\text{w}_i|
.
\end{equation}
Alternatively, we are trying
to find $\w^*$ such that
$
\vec 0\in \bar \vh(\w^*)+G(\w^*),
$
where
\begin{equation}\label{eq-HHH}
\barray\ad
\bar \vh(\w)=-\frac 12\nabla_{\w}\E\|\vx^\top_n\w-y_n\|^2,\\
\ad G(\w)=\mathcal K(w_1)\times\dots\times \mathcal K(w_d),\text{ with }\w=(w_1,\dots,w_d)^\top,
\earray
\end{equation}
and
$\mathcal K(w_i)=\begin{cases}
\{-\lambda\}\text{ if }w_i>0,\\
[-\lambda,\lambda]\text{ if }w_i=0,\\
\{\lambda\}\text{ if }w_i<0.
\end{cases}$
A stochastic algorithm  can be constructed as
\begin{equation}\label{ex-4.2-1}
\w_{n+1}=\w_n + a_n(y_n-\w^\top \vec \vx_n)\vec \vx_n+a_ng_n(\w_n),
\end{equation}
where
$g_n(\w_n)\in G(\w_n)$ with $G(\cdot)$ defined in \eqref{eq-HHH};
and the projection algorithm can be written as
\begin{equation}\label{ex-4.2-2}
\begin{cases}
\tilde\w_{n+1}=\w_n + a_n(y_n-\w^\top \vec \vx_n)\vec \vx_n+a_ng_n(\w_n),\\
\w_{n+1}=\Pi_H(\tilde \w_{n+1}),
\end{cases}
\end{equation}
with $H$ being a compact and convex set.
While the other assumptions are easily verified, the stability assumption needs to be checked carefully.
We  verify condition \ref{G2} for algorithm \eqref{ex-4.2-1} later in Proposition \ref{ex-prop-1}, whose proof is postponed to Section \ref{sec:prof3}.

Note that loss functions defined as the sum of the errors of prediction and the $L^1$-norm regularization are often used in dimension reduction problem in high dimensional statistics \cite{G15},
in which, the $L^1$-norm is used to penalize the dimension of subspace that we are trying to project onto.
Roughly,
$\sum_{i=1}^d|\text{w}_i|$ cannot be large causing $w_i$
to be small for all $i\in\{1,\dots,d\}$.
 If we use the squared norm,
all $w_i$ would bare
the same weight.
 If we use the absolute deviation,
 some ``less-informative" coordinates will be highlighted and leads to $w_i=0$ for such coordinates.
More intuitively,
in a two-dimensional case,
 from a geometric point of view, the unit ball in $L^1$-norm is of diamond shape with four vertices
  instead of a circle in $L^2$-norm
   so that the optimal value will  often be obtained on some axis.  For more intuition on Lasso algorithm as well as $L^1$-norm penalization, we refer to the work by  Tibshirani in \cite{Tib96}.

\begin{rem}
In practice, the above algorithms  may need to be modified
such as stochastic coordinate descent (SCD),
truncated gradient (TruncGrad), etc.,
to be more effective in real data and/or in the problem of inducing sparsity
 \cite{LLZ09}. The convergence of these modified algorithms can be obtained by applying our results with
modifications.
Here,
we only discuss a
 simple version of the algorithm.
  There are other algorithms, which minimize loss functions containing absolute norm such
as robust regression and least absolute deviation (LAD) with/without Lasso \cite{HH77,WLJ07}.	Algorithms \eqref{ex-4.2-1} and \eqref{ex-4.2-2} and their variants are widely applied by the machine learning community in  applications with a large-scale data set \cite{HH77,LLZ09,WLJ07}.
\end{rem}

\begin{prop}\label{ex-prop-1}
	Assume that $\E[\vx_n\vx^\top_n]$ is a positive definite matrix.
Let $G^*(\w)=\bar \vh(\w+\w^*)+G(\w+\w^*)$, where $\bar \vh(\cdot)$, $G(\cdot)$ are as in \eqref{eq-HHH}
and $V(\w)=|\w|^2$, $U(\w)=\sum_{i=1}^d w_i$. Then
$
\dot{\bar V}_{\{U\}}^{G^*}(\w)\leq -c_1|\w|^2,
$
where $c_1>0$ is the smallest eigenvalue of $\E[\vx_n\vx^\top_n]$.
\end{prop}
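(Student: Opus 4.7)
The strategy is to reduce the $\mathcal{U}$-generalized decay inequality to two classical ingredients: (i) positive definiteness of the covariance matrix, which controls the smooth quadratic part of the loss, and (ii) monotonicity of the subdifferential of the convex $L^1$ penalty, which controls the non-smooth part. The role of the equilibrium condition is to tie these together at $\w^*$.

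First, I would compute $\bar\vh$ explicitly. Setting $A:=\E[\vx_n\vx_n^\top]$ and $\vec c:=\E[y_n\vx_n]$, one obtains $\bar\vh(\w)=-A\w+\vec c$, so the equilibrium condition $\vec 0\in\bar\vh(\w^*)+G(\w^*)$ yields some $\vec g^*\in G(\w^*)$ with $\vec g^*=A\w^*-\vec c$. Consequently every $\vec v\in G^*(\w)=\bar\vh(\w+\w^*)+G(\w+\w^*)$ can be written as $\vec v=-A\w+(\vec g-\vec g^*)$ with $\vec g\in G(\w+\w^*)$, giving
\[
\langle\nabla V(\w),\vec v\rangle=\langle 2\w,\vec v\rangle=-2\w^\top A\w+2\w^\top(\vec g-\vec g^*).
\]

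Next, I would exploit that $G(\cdot)=-\lambda\,\partial\|\cdot\|_1$ is the negative subdifferential of the convex, separable function $\w\mapsto\lambda\|\w\|_1$. Writing $\vec g=-\lambda\vec p$ and $\vec g^*=-\lambda\vec p^*$ with $\vec p\in\partial\|\w+\w^*\|_1$ and $\vec p^*\in\partial\|\w^*\|_1$, monotonicity of the subdifferential of a convex function applied to the pair $(\w+\w^*,\w^*)$ gives
\[
\w^\top(\vec p-\vec p^*)=\bigl((\w+\w^*)-\w^*\bigr)^\top(\vec p-\vec p^*)\geq 0,
\]
so $\w^\top(\vec g-\vec g^*)\leq 0$. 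Combined with the Rayleigh bound $\w^\top A\w\geq c_1|\w|^2$, this produces the uniform estimate $\langle 2\w,\vec v\rangle\leq -2c_1|\w|^2$ for every $\vec v\in G^*(\w)$, which is (more than) enough to conclude.

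The only remaining point is to verify that this uniform bound dominates the $\mathcal{U}$-generalized derivative $\dot{\bar V}^{G^*}_{\{U\}}(\w)$ with $U(\w)=\sum_{i=1}^d w_i$. Since $\dot{\bar V}^{G^*}_{\{U\}}$ is defined (cf.\ Definitions \ref{def-56-11} and \ref{non-def-1}(iv)) as a supremum of $\langle\nabla V(\w),\vec v\rangle$ over an appropriate sub-collection of $G^*(\w)$, any pointwise upper bound valid for all $\vec v\in G^*(\w)$ automatically bounds it; the function $U$ is introduced to allow one to \emph{remove} degenerate directions from the supremum, which we do not need here. The main obstacle I anticipate is therefore administrative rather than substantive: correctly unpacking the definition of $\dot{\bar V}^{G^*}_{\{U\}}$ and confirming that the product structure $G(\w)=\mathcal{K}(w_1)\times\cdots\times\mathcal{K}(w_d)$ matches the coordinatewise decomposition $\partial\|\w\|_1=\partial|w_1|\times\cdots\times\partial|w_d|$, so that the monotonicity step applies coordinatewise without loss.
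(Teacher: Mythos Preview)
Your proposal is correct and follows essentially the same route as the paper: decompose $\vec v\in G^*(\w)$ using the equilibrium condition, bound the quadratic part via Rayleigh's inequality, and show the cross term $\w^\top(\vec g-\vec g^*)\leq 0$. The paper isolates this last inequality as a separate lemma and proves it by coordinatewise case analysis on the sign of $w_i^*$, whereas your appeal to monotonicity of the subdifferential of the convex function $\lambda\|\cdot\|_1$ is a cleaner way to see the same fact; both arguments also agree that since $\partial U\equiv(1,\dots,1)^\top$ the $\{U\}$-reduction leaves $G^*$ unchanged, so bounding $\langle 2\w,\vec v\rangle$ over all of $G^*(\w)$ suffices.
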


\subsection{Support Vector Machine (SVM) Classification}\label{sec:app3}
We first consider a stochastic optimization problem and then treat the problem of support vector machine (SVM) classification problem. The Pegasos algorithm will be introduced next.
Consider the following problem:
$
\text{minimize }L(\w):=\frac {\lambda}2 \|\w\|^2+\max\{0, 1-\E[y_n \w^\top\vec x_n]\},
$
where $(\vec \vx_n,y_n)$ is a sequence of i.i.d. random variables.
The stochastic version of sub-gradient descent algorithm for this problem is as follows
\begin{equation}\label{ex-4.3-1}
\w_{n+1}=\w_n - a_n\lambda\w_n+a_ng_n(\w_n,\vec \vx_n,y_n),
\end{equation}
where
$g_n (\w_n,\vec \vx_n,y_n)\in\partial \big(\text{-}\max\{0,1-y_n\w_n^\top \vec \vx_n\}\big),$
i.e.,
$
g_n (\w_n,\vec \vx_n,y_n)\in\begin{cases}
\{\vec 0\}\text{ if } y_n\w_n^\top \vec \vx_n>1,\\
\co\;\{\vec 0,y_n\vec \vx_n\}\text{ if }y_n\w_n^\top \vec \vx_n=1,\\
\{y_n\vec \vx_n\}\text{ if } y_n\w_n^\top \vec \vx_n<1;
\end{cases}
$
or as the following projection algorithm with the set $H$ being a compact and convex set,
\begin{equation}\label{ex-4.3-2}
\begin{cases}
\tilde\w_{n+1}=\w_n - a_n\lambda\w_n+a_ng_n(\w_n,\vec \vx_n,y_n),\\
\w_{n+1}=\Pi_H(\tilde\w_{n+1}).
\end{cases}
\end{equation}
Applying our results, the convergence to the optimal point, robustness, rates of convergence of algorithm \eqref{ex-4.3-1} (as well as algorithm \eqref{ex-4.3-2}) can be obtained under conditions in our setup.
We will verify the stability condition \ref{G2} for algorithm \eqref{ex-4.3-1} later in Proposition \ref{ex-prop-2}, whose proof is postponed to Section \ref{sec:prof3}.
The corresponding numerical example
is given in Example \ref{ex-num-2} in Section \ref{sec:num}.

Algorithms \eqref{ex-4.3-1} and \eqref{ex-4.3-2} can be recast into a form known as Pegasos algorithms and widely applied to
support vector machine (SVM) classification problem;
SVM is an effective and a popular classification learning tool  \cite{CS00}.
More intuition, motivation, and details of the hinge loss function as well as the above loss function in SVM classification can be found in \cite{CS00,SS08,SSC11} and references therein.
Algorithms \eqref{ex-4.3-1} and \eqref{ex-4.3-2} as well as their modified versions
were studied in \cite{SS08,SSC11} and references therein.
However,
the convergence was only given in high probability,  not w.p.1.
By applying
our
results,
the convergence w.p.1 is obtained.
The applications of algorithms \eqref{ex-4.3-1} and \eqref{ex-4.3-2} to classification problem in large-scale data can be found in \cite{SS08,SSC11} and references therein.

\begin{prop}\label{ex-prop-2}
Let
$\bar\vh(\w)=-\lambda\w$, and
$$
G_1(\w)=\begin{cases}
\{\vec 0\}\text{ if } \E[y_n\w^\top \vec \vx_n]>1,\\
\co\;\{\vec 0,\E[y_n\vec \vx_n]\}\text{ if }\E[y_n\w^\top \vec \vx_n]=1,\\
\{\E[y_n\vec \vx_n]\}\text{ if } \E[y_n\w^\top \vec \vx_n]<1,
\end{cases}
$$
and $G^*(\w)=\bar \vh(\w+\w^*)+G_1(\w+\w^*)$,
and $V(\w)=|\w|^2$, $U(\w)=\sum_{i=1}^d w_i$. Then
$
\dot{\bar V}_{\{U\}}^{G^*}(\w)\leq -\lambda\;|\w|^2.
$
\end{prop}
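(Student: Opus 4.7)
The plan is to reduce the computation of the $\mathcal U$-generalized derivative to a standard set-valued Lyapunov derivative computation, and then exploit the fact that $-G_1$ is a convex subdifferential. First I will identify the structural fact that makes everything work: the set-valued mapping $-G_1$ is precisely the Clarke subdifferential of the (convex) hinge function $\w\mapsto \max\{0,1-\w^{\top}\E[y_n\vec x_n]\}$. In particular, $-G_1$ is a maximal monotone operator, so that for every $\vec g\in G_1(\w+\w^*)$ and every $\vec g^*\in G_1(\w^*)$ one has the (anti-)monotonicity inequality
\begin{equation*}
\langle \vec g-\vec g^*,\,\w\rangle \;=\; \langle \vec g-\vec g^*,\,(\w+\w^*)-\w^*\rangle \;\le\; 0.
\end{equation*}

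Next I would use the optimality condition. By assumption on $\w^*$ we have $\vec 0\in\bar\vh(\w^*)+G_1(\w^*)=-\lambda\w^*+G_1(\w^*)$, so there exists a distinguished selection $\vec g^*\in G_1(\w^*)$ with $\vec g^*=\lambda\w^*$. I will fix this $\vec g^*$ throughout. Then for an arbitrary $\vec v\in G^{*}(\w)=-\lambda(\w+\w^*)+G_1(\w+\w^*)$, writing $\vec v=-\lambda(\w+\w^*)+\vec g$ with $\vec g\in G_1(\w+\w^*)$, a direct expansion gives
\begin{equation*}
\langle \nabla V(\w),\vec v\rangle \;=\; 2\langle \w,\vec v\rangle \;=\; -2\lambda|\w|^2-2\lambda\langle \w,\w^*\rangle+2\langle \w,\vec g\rangle \;=\; -2\lambda|\w|^2+2\langle \w,\vec g-\vec g^*\rangle,
\end{equation*}
where I substituted $\lambda\w^*=\vec g^*$. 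The monotonicity inequality above then yields $\langle \nabla V(\w),\vec v\rangle\le -2\lambda|\w|^2$ for every $\vec v\in G^{*}(\w)$.

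Finally I would translate this bound into the $\mathcal U$-generalized derivative with $\mathcal U=\{U\}$, $U(\w)=\sum_i w_i$. By the definition recalled in Definition \ref{def-56-11} and Definition \ref{non-def-1}(iv), $\dot{\bar V}^{G^{*}}_{\{U\}}(\w)$ is obtained as an essential supremum of $\langle \nabla V(\w),\vec v\rangle$ over $\vec v\in G^{*}(\w)$, with the singleton family $\{U\}$ allowing us to discard a negligible set of ``irrelevant'' directions arising from the jump of $G_1$ across the hyperplane $\{\w:\E[y_n(\w+\w^*)^{\top}\vec x_n]=1\}$. Since the pointwise bound $\langle \nabla V(\w),\vec v\rangle\le -2\lambda|\w|^2$ holds on \emph{all} of $G^{*}(\w)$ (and hence on any subselection used in the generalized derivative), I obtain $\dot{\bar V}^{G^{*}}_{\{U\}}(\w)\le -2\lambda|\w|^2\le -\lambda|\w|^2$, which is the claim (with slack).

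The only real subtlety—and the step I expect to be the main obstacle—is the bookkeeping at the kink set $\{\E[y_n(\w+\w^*)^{\top}\vec x_n]=1\}$: on this set $G_1$ is the full segment $\co\{\vec 0,\E[y_n\vec x_n]\}$, so one must check that the choice of selection $\vec g^*=\lambda\w^*$ at $\w^*$ is compatible with the anti-monotonicity inequality for every $\vec g$ in this segment. This is automatic once one recognizes $-G_1$ as a convex subdifferential, but invoking convex analysis cleanly (rather than splitting into the three cases of $G_1$ and checking each) is what makes the argument short and uniform.
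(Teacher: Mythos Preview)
Your argument is correct and, in fact, cleaner than the paper's. Both proofs reduce to the key inequality $\langle \w,\vec g-\vec g^*\rangle\le 0$ for $\vec g\in G_1(\w+\w^*)$ and $\vec g^*\in G_1(\w^*)$; you obtain it in one stroke by recognizing $-G_1$ as the subdifferential of the convex hinge function $\phi(\w)=\max\{0,1-\w^\top\E[y_n\vec x_n]\}$ and invoking monotonicity of convex subdifferentials, whereas the paper establishes the same inequality by a direct three-case split on the sign of $\E[y_n(\w^*)^\top\vec x_n]-1$ (with further sub-cases on the sign of $\E[y_n\w^\top\vec x_n]$). Your route is shorter and exposes the structural reason the inequality holds; the paper's route is elementary but hides the underlying convexity.

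One minor correction that does not affect correctness: your description of the role of $U(\w)=\sum_i w_i$ is off. Since $U$ is smooth, $\partial U(\w)=\{(1,\ldots,1)^\top\}$ is a singleton, and hence by the definition of $M^F_U$ every $\vec q\in G^*(\w)$ trivially satisfies the required condition; thus $\tilde G^*_{\{U\}}(\w)=G^*(\w)$ and nothing is discarded. The $\{U\}$-generalized derivative here is simply $\max_{\vec v\in G^*(\w)}\langle 2\w,\vec v\rangle$. Your parenthetical about ``discarding irrelevant directions at the kink'' should therefore be dropped; fortunately your bound already holds on all of $G^*(\w)$, as you yourself note, so the conclusion stands.
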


\subsection{Root Finding for Set-Valued Mappings}
\label{sec:app4}
In this section, we
 demonstrate
the effectiveness of our results in proving convergence of a stochastic approximation algorithm for set-valued mappings.
Assume that we need to find zero points of a set-valued mapping $G(\cdot)$,
i.e., find $\w^*$ such that
$
\vec 0\in G(\w^*),
$
where $G:\R^2\to\R^2$ is as follow
$
G(\vec w)=(-w_1+w_2+h(w_2),-w_1-w_2+h(w_1))
$
with $\w=(w_1,w_2)^\top$,
and $h(\cdot):\R\to 2^{\R}$ is defined as
$
h(w)=\begin{cases}
0\text{ if }w\neq 1,\\
[-1,1]\text{ if }w=1.
\end{cases}
$
 When only noisy observations or measurements are available, a
 stochastic approximation algorithm for root finding takes the form
\begin{equation}
\w_{n+1}=\w_n+a_n\left(f(\w_n,\xi_n)+\boldsymbol{\beta}_n\right),\;f(\w_n,\xi_n)\in G(\w_n),
\end{equation}
where $\{a_n\}$ is a sequence of step sizes and  $\{\boldsymbol{\beta}_n\}$ is a sequence of  $2$-dimensional i.i.d. random variables that are normally distributed with mean $\vec 0$ and identity covariance matrix.

We  compare our results with the results in \cite{BHS05} as well as other approaches in studying stability of differential inclusions for applications to stochastic approximations.
Under boundedness assumption of $\{\w_n\}$ (or using projection algorithm to convex and compact set), we obtain the limit points of $\{\w_n\}$ are contained in the set of chain-recurrent points of the limit system
$
\dot {\vec w}(t)\in G(\vec \w(t)).
$
Using results in \cite[Section 3 and 4]{BHS05}, to prove $\{\w_n\}$ converges to $\vec 0$, we need to construct a Lyapunov function $V$ such that
$
\nabla V(\w) g(\w)< 0$, $\forall g(\w)\in G(\w)$ for all $\w\neq \vec 0.$
Consider a candidate Lyapunov function $V(\w)=|\w|^2$.
Then
$$
\nabla V(\w)g(\w)=-|\w|^2+w_1g_1(w_2)+w_2g_2(w_1),\;\text{where }g_1(w_2)\in h(w_2), g_2(w_1)\in h(w_1).
$$
At $\w=(1,1)^\top$, one possibility is that
$
\nabla V((1,1))g((1,1))=(-\|\w\|^2+w_1+w_2)\big|_{\w=(1,1)^\top}=0.
$
So, we cannot guarantee the set $\{\vec 0\}$
to be  a globally stable and attracting set. That is,
we cannot prove that
$\{\w_n\}$
converges to $\vec 0$ using this Lyapunov function.
However, using our results, we can prove that $\{\w_n\}$ tends to $\vec 0$ w.p.1 by using the $\mathcal U$-generalized Lyapunov function corresponding to such a candidate function.
Condition {\rm \ref{G2}} needs to be verified, and it is stated
in the following proposition, whose proof is in Section \ref{sec:prof3}.
Roughly speaking, compared with the existing results in the literature, our approach allows one to ignore some ``less important" points (for example, the point $(1,1)$ above), that may make a (promising) candidate Lyapunov function not satisfy the conditions for the stability in the literature though they generally do not affect the stability of the systems.
Moreover,
in fact, our setting even allows $f(\w_n,\xi_n)$ to be in a neighbor of $G(\w_n)$ with (random) radius averaged out to $0$.
A numerical example
is given in Example \ref{ex-num-3} in Section \ref{sec:num}.

\begin{prop}\label{ex-prop-3}
Let $V(\w)=\|\w\|^2$ and
$$U(\w)=\max\{w_1-1,0\}-\min\{w_1+1,0\}+\max\{w_2-1,0\}-\min\{w_2+1,0\}.$$
 Then, one has
$
\dot{\bar V}^{G}_{\{U\}}(\w)\leq -\|\w\|^2,\;\forall\;\w.
$
\end{prop}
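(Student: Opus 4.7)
The plan is to verify $\dot{\bar V}^G_{\{U\}}(\w)\leq -\|\w\|^2$ pointwise by a case analysis on $\w=(w_1,w_2)^\top$, exploiting the fact that $G$ is single valued off the two lines $\{w_1=1\}$ and $\{w_2=1\}$, and bringing in the auxiliary function $U$ only to handle the single obstruction point $\w=(1,1)$. The pivotal identity is that for any selection $g=(-w_1+w_2+a,\,-w_1-w_2+b)\in G(\w)$ with $a\in h(w_2)$ and $b\in h(w_1)$,
$$
\langle \nabla V(\w),g\rangle=2w_1 g_1+2w_2 g_2=-2\|\w\|^2+2w_1 a+2w_2 b,
$$
so this is the quantity I need to control in every region of the plane.

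First, I would dispatch all points with $w_1\neq 1$ and $w_2\neq 1$: there $h(w_i)=\{0\}$, the cross terms vanish, and the identity collapses to $-2\|\w\|^2\leq -\|\w\|^2$. On the edge $\{w_1=1,\,w_2\neq 1\}$ we have $a=0$ and $b\in[-1,1]$, so the supremum over selections is $-2(1+w_2^2)+2|w_2|$ and the target bound $\leq -(1+w_2^2)$ reduces to $(|w_2|-1)^2\geq 0$; the symmetric argument covers $\{w_2=1,\,w_1\neq 1\}$. Since $h$ is set valued only at $w=1$ (not at $w=-1$), the ``off-diagonal corners" $(1,-1)$ and $(-1,1)$ involve a single set-valued coordinate, and a one-line sup computation again yields the bound with room to spare.

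The sole obstruction is $\w=(1,1)$, where the selection $a=b=1$ gives $\langle\nabla V,g\rangle=0$ and the raw supremum fails the bound. This is exactly where the auxiliary function $U$ earns its keep: $U$ coincides with the one-sided $\ell^1$-distance from $\w$ to the box $[-1,1]^2$, so $U\equiv 0$ on the box, $U>0$ outside, and the Clarke subgradient $\partial^C U(1,1)=[0,1]^2$ is maximally rich precisely at the obstruction. According to the construction of the $\{U\}$-generalized set-valued derivative in Definition~\ref{def-56-11} and Definition~\ref{non-def-1}(iv), selections $g\in G(\w)$ that are incompatible with the Clarke subgradient of $U$, i.e.\ selections pointing ``outward" relative to the sublevel set $\{U\leq U(\w)\}$, are discarded from the supremum defining $\dot{\bar V}^G_{\{U\}}$. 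At $(1,1)$ this forces $g_1\leq 0$ and $g_2\leq 0$, hence $a=g_1\leq 0$. The restricted supremum then becomes $\sup_{a\leq 0,\,b\in[-1,1]}(2a+2b-4)=-2=-\|\w\|^2$, recovering the required pointwise inequality.

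The main obstacle will be this last step: translating the intuitive ``outward selections are discarded" picture into the exact operation prescribed by Definition~\ref{def-56-11}/\ref{non-def-1}(iv), and verifying that the same restriction mechanism is harmless on the edges $\{w_1=1\}$ and $\{w_2=1\}$, where $\partial^C U$ is nontrivial but less constraining than at the corner, so that Steps~1--2 go through verbatim under the sharper derivative.
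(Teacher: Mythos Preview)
Your edge analysis via the raw supremum over $G(\w)$ is a legitimate upper bound (since $\tilde G_{\{U\}}(\w)\subset G(\w)$ always), and your identity $\langle\nabla V(\w),g\rangle=-2\|\w\|^2+2w_1a+2w_2b$ is exactly right. The genuine gap is your reading of the $\{U\}$-reduction at $(1,1)$. The set $M^G_U(\w)$ does \emph{not} discard ``outward-pointing'' selections; by its definition it keeps only those $q\in G(\w)$ for which the map $p\mapsto p^\top q$ is \emph{constant} across all $p\in\partial U(\w)$. At $\w=(1,1)$ one has $\partial U(\w)=[0,1]\times[0,1]$, so constancy of $p_1q_1+p_2q_2$ over $(p_1,p_2)\in[0,1]^2$ forces $q_1=q_2=0$. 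But every $q\in G(1,1)$ has $q_2=-2+b\in[-3,-1]$, so $q_2=0$ is impossible and $M^G_U(1,1)=\emptyset$. The $\{U\}$-generalized derivative is then $-\infty$ by convention, and the inequality is trivial there. Your ``$g_1\le 0,\ g_2\le 0$'' restriction is not what the definition produces, and your attempt to translate the outward/inward picture into the exact mechanism of $M^G_U$ would fail.

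The paper's proof pushes this observation further: because $\partial U$ is already a nondegenerate segment or rectangle whenever $|w_1|=1$ or $|w_2|=1$, the constancy requirement forces a linear condition on $q$ that the (single- or set-valued) $G(\w)$ never meets, so $M^G_U(\w)=\emptyset$ on the entire set $\{|w_1|=1\}\cup\{|w_2|=1\}$. Off that set $\partial U$ is a singleton, the reduction is vacuous, $G(\w)=\{(-w_1+w_2,-w_1-w_2)\}$, and one line gives $\dot{\bar V}^G_{\{U\}}(\w)=-2\|\w\|^2$. Thus your Step~2 edge computations, while correct as upper bounds, are unnecessary once the reduction is applied correctly.
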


\subsection{Multistage Decision Making with Partial Observations}
Let $\mathcal E$ and $\mathcal B$ be measurable
spaces denoting the action space and the state space, respectively.
Suppose that  $\mathcal O \subset\R^d$ is a convex and
compact set denoting the outcome space.
At discrete times $n = 1, 2, \dots$, a decision maker chooses an action $e_n$ from $\mathcal E$ and observes an outcome $M(e_n, b_n)$, where $M : \mathcal E \times\mathcal B \to \mathcal O$ is a (measurable) function.
However, it is worth noting that the outcome is not always  observable in application but is only partially observed with noise.
So, the exact outcome $M(e_n,b_n)$
is not available for the decision maker,
but only noise corrupted outcome
$\wdt M(e_n,b_n,\xi_n)$ is available, where $\xi_n$ represents the noise.

Thus, we consider the following
multistage decision making model with partial observations: (1) the sequence $\{(e_n, b_n)\}_{n\geq 0}$ and $\{\xi_n\}_{n\geq 0}$ are random processes defined on some probability
space $(\Omega, \F, \PP)$ and adapted to the filtration $\{\F_n\}$ and the noise sequence $\{\xi_n\}$ satisfies
that for some $T>0$, each $\eps>0$, and each $(e,b)\in\mathcal E\times\mathcal B$,
\begin{equation}\label{eq-exp5-0}
	\lim_{n\to\infty}\PP\bigg\{\sup_{j\geq n}\max_{t\leq T}\Big|\sum_{i=m(t)}^{m(jT+t)-1}\frac 1{i+1}\big(M(e,b)-\wdt M(e,b;\xi_i)\big)\Big|\geq \eps\bigg\}=0,
\end{equation}
where $m(t):=\max\{n\in\N:\sum_{i=1}^n\frac 1i\leq t\}$;
(2) the action of the decision maker is independent of the environment if provided the past information $\{(e_1,b_1),\dots,(e_n,b_n)\}$, i.e.,
$\PP((e_{n+1}, b_{n+1})\in de\times db | \F_n) = \PP(e_{n+1} \in de| \F_n)\PP(b_{n+1} \in db | \F_n)$;
(3) the decision maker
records only the cumulative average of the past (partially observed) outcomes,
\begin{equation}\label{eq-exp5-1}
\vX_n =\frac 1n\sum_{i=1}^n\wdt M(e_i, b_i;\xi_i);
\end{equation}
 (4) her/his decisions are based on this average, i.e.,
$
\PP(e_{n+1} \in de | \F_n) = Q_{\vX_n} (de),
$
where for each $\vx \in\mathcal O$, $Q_{\vx}(\cdot)$ is a probability measure (in $\mathcal E$), and for each measurable set $A\subset\mathcal E$, the map: $\vx \in \mathcal O\to Q_{\vx}(A) \in [0, 1]$ is measurable. The family $Q = \{Q_{\vx}:\vx\in\mathcal O\}$ is
termed a strategy for the decision maker.

\begin{deff}
{\bf (Blackwell's approachability)} A set $E\subset \mathcal O$ is said to be
approachable if there exists a strategy $Q$ such that $\vX_n\to E$ w.p.1.
\end{deff}

Directed calculations show that
\begin{equation}
\vX_{n+1}=\vX_n +\frac 1{n+1}\Big(-\vX_n+\wdt M(e_{n+1},b_{n+1};\xi_{n+1})\Big).
\end{equation}
For each $\vx \in\mathcal O$, let
$
G_1(\vx) = \big\{\int_{\mathcal E\times\mathcal B}
M(e,b)Q_{\vx}(de)\nu(db) : \nu\in\mathcal P(\mathcal B)\big\},
$
where $\mathcal P(\mathcal B)$ is the set of probability measures over $\mathcal B$.
Define
$
G(\vx)=-\vx+\co\;G_1(\Pi_{\mathcal O}(\vx)),
$
where
$\Pi_{\mathcal O}(\cdot)$ is the (orthogonal) projection (onto $\mathcal O$) operator.
Applying our results (Theorems \ref{mth-3}, \ref{mth-6} and \ref{mth-7}), we obtain following results.

\begin{thm}
	Under the above settings, the following claims hold.
	\begin{itemize}
		\item[]{\rm(1)} The limit of any convergent subsequence of the shifted sequence of linear continuous time interpolated processes of
\eqref{eq-exp5-1} is a solution of the following differential inclusion w.p.1
		\begin{equation}\label{eq-exp5-2}
		\dot{\vX}(t)\in G(\vX(t)).
		\end{equation}
		\item[]{\rm (2)} If there is a strategy $Q$ such that $E$ is a globally asymptotically stable set of differential inclusion \eqref{eq-exp5-2}, then $E$ is approachable.
		\item[]{\rm(3)} If there exists a strategy $Q$ such that $E=\{\vx^*\}$ is a unique approachable set, then under further technical conditions $($as in Theorem {\rm\ref{mth-7}}$)$, the limit processes of convergent subsequences of shifted interpolated processes generated by normalized sequence $\frac{\vX_n-\vx^*}{\sqrt{n}}$ converges weakly to solutions of a stochastic differential inclusion.
		\item[]{\rm (4)} If the ``convergence to $0$" condition \eqref{eq-exp5-0} is relaxed as $|M(e,b)-\wdt M(e,b,\xi)|<\eta, \forall e,b,\xi,$ w.p.1, then the conclusions {\rm(1)} and {\rm(2)} still hold with $G$ in \eqref{eq-exp5-2} being replaced by its neighbor with radius $\eta$. Moreover, if $E_\eta$ is a globally asymptotically stable set of the corresponding $($limit$)$ differential inclusions $($and thus, is a approachable set$)$, then there is a $($deterministic$)$ non-decreasing function $\phi(\cdot)$ satisfying $\lim_{t\to 0}\phi(t)=0$ such that $\mathrm{distance}(E_\eta,E)\leq \phi(\eta)$.
	\end{itemize}
\end{thm}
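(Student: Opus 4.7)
The plan is to recast the recursion
\[
\vX_{n+1}=\vX_n+\frac{1}{n+1}\Big[-\vX_n+M(e_{n+1},b_{n+1})+\big(\wdt M(e_{n+1},b_{n+1};\xi_{n+1})-M(e_{n+1},b_{n+1})\big)\Big]
\]
as an instance of algorithm \eqref{eq-algo} with step sizes $a_n=1/(n+1)$, continuous drift $\bar\vh(\vx)=-\vx$, discontinuous/set-valued drift supplied by $M(e_{n+1},b_{n+1})$, and a pure noise term $\vh_0(\xi_{n+1}):=\wdt M(e_{n+1},b_{n+1};\xi_{n+1})-M(e_{n+1},b_{n+1})$, with $\vbe_n\equiv 0$. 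Take the limiting set-valued mapping to be $G(\vx)=-\vx+\co\,G_1(\Pi_{\mathcal O}(\vx))$. For part~(1), Assumption~\ref{A}(i)--(iii) is immediate because $\bar\vh$ is linear, and \ref{A}(iv) is precisely \eqref{eq-exp5-0}. The values of $G$ are non-empty, convex, compact, and contained in the common ball $\mathcal O-\mathcal O$ by compactness/convexity of $\mathcal O$. Under the mild (and standard in approachability theory) assumption of weak continuity of the kernel $\vx\mapsto Q_\vx$, $\vx\mapsto G_1(\vx)$ is upper semicontinuous with compact values, hence has closed graph; convexification and composition with the continuous $\Pi_{\mathcal O}$ preserve this, giving \ref{G1}(i)--(ii). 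The martingale decomposition of $M(e_{n+1},b_{n+1})$ against its conditional mean (which lies in $G_1(\vX_n)$ by the independence in item~(2) of the model) furnishes \ref{G1}(iii) by a standard Burkholder/Kronecker argument on the resulting bounded difference sequence. Boundedness of $\{\vX_n\}$ is automatic since each iterate is a running average of bounded terms. Theorem~\ref{mth-3} then delivers that every limit of a convergent subsequence of the shifted piecewise-linear interpolations solves \eqref{eq-exp5-2}, proving~(1).

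For part~(2), the final clause of Theorem~\ref{mth-3} ensures that the limit set of each such subsequence is internally chain transitive for \eqref{eq-exp5-2} and therefore contained in the chain-recurrent set $\mathcal R$. Because every internally chain-transitive set is contained in each globally asymptotically stable set, global asymptotic stability of $E$ forces $\vX_n\to E$ w.p.1, which is precisely approachability of~$E$.

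For part~(3), assume $E=\{\vx^*\}$ is the unique approachable point and supplement the hypotheses of Theorem~\ref{mth-7}: mixing and boundedness of $\{\xi_n\}$ as in \ref{R}(iii), outer $T$-differentiability of $G$ at $\vx^*$ as in \ref{R}(iv), and tightness of $\{(\vX_n-\vx^*)/\sqrt{a_n}\}$, for which Proposition~\ref{prop:tight} can be invoked with $V(\vx)=|\vx-\vx^*|^2$, exploiting that the linear part $-\vx$ provides the required Lyapunov contraction. Since $\bar\vh(\vx)=-\vx$ we have $A=-I$, and the step sizes $a_n=1/(n+1)$ satisfy \ref{R}(i)(a) with $\eps_n=\frac{1}{2n}+o(1/n)$. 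Theorem~\ref{mth-7} then yields weak convergence of the shifted interpolations of $\vU_n=(\vX_n-\vx^*)/\sqrt{a_n}$ to solutions of
\[
d\vU(t)\in\Big[-\tfrac{1}{2}\vU(t)+T(\vU(t))\Big]dt+\Sigma_1^{1/2}\,d\bar\vW(t).
\]
For part~(4), if \eqref{eq-exp5-0} is replaced by the pointwise bound $|\wdt M-M|<\eta$, then $\vh_0(\xi_{n+1})$ is no longer asymptotically negligible and is absorbed into the bias $\vbe_n$ in \eqref{eq-algo} with $|\vbe_n|\le\eta$. Theorem~\ref{mth-6} gives the DI \eqref{eq-exp5-2} with $G$ replaced by its $2\eta$-neighborhood, proving the perturbed version of~(1); globally asymptotic stability in this perturbed inclusion yields approachability of $E_\eta$ as before; and Lemma~\ref{proo-6-lem-1} (continuation of chain-recurrent sets) provides a non-decreasing $\phi$ with $\phi(0{+})=0$ satisfying $E_\eta\subset\mathcal R_{2\eta}\subset N_{\phi(\eta)}(\mathcal R)=N_{\phi(\eta)}(E)$, whence $\mathrm{distance}(E_\eta,E)\le\phi(\eta)$.

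The main obstacle is the verification of the closed-graph hypothesis \ref{G1}(ii) when the model only specifies measurability of $\vx\mapsto Q_\vx(A)$; I would resolve this by adding a weak-continuity assumption on the strategy kernel, a routine regularity condition in the approachability literature, which, combined with boundedness of $M$ and compactness of $\mathcal O$, makes $\vx\mapsto G_1(\vx)$ upper semicontinuous. A secondary technical point is the tightness step for~(3): Proposition~\ref{prop:tight} applies once one notes that the linear contraction $\bar\vh(\vx)=-\vx$ controls the quadratic Lyapunov function $V(\vx)=|\vx-\vx^*|^2$ with decay rate $\lambda=2$, so the hypothesis $\lambda>1$ for the step size $a_n=1/n$ in Proposition~\ref{prop:tight} is satisfied.
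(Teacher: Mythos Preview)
Your proposal is correct and follows exactly the route the paper takes: the paper's own proof is the single sentence ``Applying our results (Theorems~\ref{mth-3}, \ref{mth-6} and \ref{mth-7}), we obtain following results,'' and you have supplied the hypothesis verification that this sentence presupposes. Your observation that the closed-graph requirement \ref{G1}(ii) needs a weak-continuity assumption on $\vx\mapsto Q_\vx$ (beyond the mere measurability stated in the model) is a genuine technical point the paper leaves implicit.
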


\begin{rem}
{\rm	The studies on the stability of differential inclusions can be found in Appendix \ref{sec:63-asy}
	(see also \cite{BHS05} and references therein).
	[As was noted, in some cases (for example, as in Section \ref{sec:app4}), the $\mathcal U$-generalized Lyapunov condition presented in this work is more effective than the stability conditions counterpart in existing results.]
	Multistage decision making models (without partial observations) was considered in \cite{BHS05}.
	In this application, we allow the outcome to be partially observed under noise by the decision maker.
	In addition, we characterize the limit processes as solutions rather than perturbed solutions of the limit differential inclusion, and we also obtain results of rates of convergence and robustness.
	This example can be further generalized to treat
 other criteria such as overtaking, bias,
and other so-called advanced criteria of optimality, as well as other systems such as switching dynamical systems. We refer the reader to \cite{JY13} and references therein.
Some other applications to Markov decision process using stochastic approximation can be found in \cite{PL12} and references therein.}
\end{rem}

\subsection{Proof of Theorems in Section \ref{sec:app}}\label{sec:prof3}
\begin{proof}[Proof of Proposition \ref{ex-prop-1}]
The Clarke gradient of $U(\w)$ is given by $\partial U(\w)=(1,\dots,1)^\top$.
As a consequence,
$$
\tilde {G}^*_{\{U\}}(\w)=G^*(\w).
$$
Moreover, $V$ is continuously differentiable, $\partial V(\w)=(2w_1,\dots,2w_2)^\top$.
Therefore, the $\{U\}$-generalized derivative of $V$ in the direction $G$ is given by
\begin{equation}\label{eq-ex-0}
\dot{\bar V}_{\{U\}}^{G^*}(\w)=\max_{\vec q\in \tilde G^*_{\{U\}}(\w)} (\partial V(\w))^\top\vec q=\max_{\vec q\in \tilde G^*_{\{U\}}(\w)} 2\w^\top\vec q.
\end{equation}
Noting that for any $\vec q\in \tilde G^*_{\{U\}}(\w)$,
$$
\vec q=\bar \vh(\w+\w^*)+\bar {\vec q},\text{ for }\bar {\vec q}\in G(\w+\w^*),
$$
and hence,
\begin{equation}\label{eq-ex-3}
\w^\top \vec q=-\w^\top\E[\vx_n\vx_n^\top]\w+\w^\top\big[\E\vx_n(\vx_n^\top\w^*-y)+\bar {\vec q} \big].
\end{equation}
Since $\w^*$ is the minimizer, one has
$$
\vec 0\in \bar \vh(\w^*)+G(\w^*).
$$
In particular,
$$
\vec 0\in -\E \vx_n(\vx_n^\top\w^*-y)+ G(\w^*).
$$
Hence, it is equivalent to
\begin{equation}\label{eq-ex-4}
\E \vx_n(\vx_n^\top\w^*-y)\in -G(\w^*).
\end{equation}

\begin{lem}\label{lem-ex-1}
For any $w_i$, $k\in -\mathcal K(w_i^*)$, $\bar q_i\in \mathcal K(w_i+w_i^*)$,
\begin{equation}\label{eq-ex-1}
w_i(k+\bar q_i)\leq 0.
\end{equation}
As a consequence, for all $\w\in\R^d$, one has
\begin{equation}\label{ine-w}
\w^\top \big(-G(\w^*)+G(\w+\w^*)\big)\leq 0,
\end{equation}
where, \eqref{ine-w}
 is understood as
$$
\w^\top \big(\vec k+\bar {\vec q}\big)\leq 0\text{ for all }
\vec k\in -G(\w^*), \;\bar{\vec q}\in G(\w+\w^*).
$$
\end{lem}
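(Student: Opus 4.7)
The plan is to recognize that $-\mathcal K(w)$ is precisely $\lambda\,\partial|\cdot|(w)$, the convex subdifferential of the absolute value function scaled by $\lambda$. Indeed, at $w>0$ we have $\partial|\cdot|(w)=\{1\}$ and $-\mathcal K(w)=\{\lambda\}$; at $w<0$, $\partial|\cdot|(w)=\{-1\}$ and $-\mathcal K(w)=\{-\lambda\}$; at $w=0$, $\partial|\cdot|(0)=[-1,1]$ and $-\mathcal K(0)=[-\lambda,\lambda]$. Equivalently, $\mathcal K(w)=-\lambda\,\partial|\cdot|(w)$. With this identification, the first claim reduces to the standard monotonicity of the subdifferential of a convex function, so no case analysis is needed.

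Concretely, I would write $k=\lambda p$ with $p\in\partial|\cdot|(w_i^*)$ and $\bar q_i=-\lambda q$ with $q\in\partial|\cdot|(w_i+w_i^*)$. Then
$$w_i(k+\bar q_i)=\lambda\,w_i(p-q)=-\lambda\bigl((w_i+w_i^*)-w_i^*\bigr)(q-p)\leq 0,$$
where the last inequality is the classical monotonicity estimate $(x-y)(u-v)\geq 0$ for $u\in\partial f(x)$, $v\in\partial f(y)$ applied to the convex function $f=|\cdot|$ with $x=w_i+w_i^*$ and $y=w_i^*$.

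For the consequence, since $G(\w)=\mathcal K(w_1)\times\cdots\times\mathcal K(w_d)$ has product structure, any $\vec k\in-G(\w^*)$ and $\bar{\vec q}\in G(\w+\w^*)$ decompose coordinate-wise with $k_i\in-\mathcal K(w_i^*)$ and $\bar q_i\in\mathcal K(w_i+w_i^*)$. Hence $\w^\top(\vec k+\bar{\vec q})=\sum_{i=1}^d w_i(k_i+\bar q_i)\leq 0$ by the first part, which yields \eqref{ine-w}. The only conceptual step is the convex-analytic identification of $\mathcal K$; a direct case analysis on the signs of $w_i^*$ and $w_i+w_i^*$ (nine cases) is possible but noticeably longer, and the main ``obstacle,'' such as it is, is simply spotting that identification.
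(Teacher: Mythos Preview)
Your argument is correct. The paper's own proof proceeds by a direct case analysis on the sign of $w_i^*$ (three cases), with further subcases on the sign of $w_i$ inside each, checking the inequality by hand in every configuration. Your route is genuinely different and more conceptual: by recognizing $-\mathcal K(w)=\lambda\,\partial|\cdot|(w)$, the inequality $w_i(k+\bar q_i)\le 0$ becomes an instance of the monotonicity of the subdifferential of a convex function, so the nine-case check collapses to a single line. This buys brevity and makes clear why the lemma holds (and shows it would hold with $|\cdot|$ replaced by any convex function), whereas the paper's case analysis is entirely self-contained and requires no outside convex-analytic fact. The coordinate-wise deduction of \eqref{ine-w} from the scalar inequality is the same in both approaches.
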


\begin{proof} Three cases are considered.

Case 1: $w_i^*=0$. \eqref{eq-ex-1} is equivalent to
$$
w_i (k+\bar q_i)\leq 0,\text{ for all }k\in [-\lambda,\lambda]\;,\;\bar q_i\in \mathcal K(w_i).
$$
If $w_i=0$, it is obvious. If $w_i>0$ then $\bar q_i=-\lambda$ and $k+\bar q_i\leq 0$ for all $k\in[-\lambda,\lambda]$. If $w_i<0$ then $\bar q_i=\lambda$ and $k+\bar q_i\geq 0$ for all $k\in[-\lambda,\lambda]$.

Case 2: $w_i^*>0$. \eqref{eq-ex-1} is equivalent to
\begin{equation}\label{eq-ex-2}
w_i (\lambda+\bar q_i)\leq 0,\text{ for all }\bar q_i\in \mathcal K(w_i+w_i^*).
\end{equation}
Since $\lambda+\bar q_i\geq 0$ for all $\bar q_i\in \mathcal K(w_i+w_i^*)$, if $w_i\leq 0$, \eqref{eq-ex-2} is clear. If $w_i>0$ then $\lambda+\bar q_i=0$ (due to $\mathcal K(w_i+w_i^*)=\{-\lambda\}$) and \eqref{eq-ex-2} holds.

Case 3: $w_i^*<0$. This case is similar to case 2. The proof of the lemma is complete.
\end{proof}

Combining \eqref{eq-ex-0}, \eqref{eq-ex-3}, \eqref{eq-ex-4}, and Lemma \ref{lem-ex-1},
 we obtain that
$$
\dot{\bar V}_{\{U\}}^{G^*}(\w)\leq -\w^\top\E[\vx_n\vx_n^\top]\w.
$$
Since $\E[\vx_n\vx_n^\top]$ is positive definite and by Rayleigh's inequality (see e.g., \cite[Chapter 3]{CZ14}), one has
$$
\w^\top\E[\vx_n\vx_n^\top]\w\geq c_1 \|\w\|^2,
$$
where $c_1>0$ is the smallest eigenvalue of $\E[\vx_n\vx_n^\top]$.
Therefore, the proposition is proved.
\end{proof}

\begin{rem}
In practice,
to guarantee the boundedness of $\w_n$,
we can use a projection algorithm
 with  a hyper-rectangle $H:=\{\w\in\R^d:-h\leq w_i\leq h,\;\forall i\}$ with $h>\lambda$ being sufficiently large.
In this case, the proof of Proposition \ref{ex-prop-1} for the projection case is similar.
Moreover, the above proof can be simplified
by applying Theorem \ref{mth-2} (in which, we only need to verify condition \ref{S2} instead of \ref{G2}) and using
$
G(\w)=\mathcal K[-\lambda\;\text{sign}](\w),
$
where $\mathcal K$ is the Krasovskii operator and $-\lambda\;\text{sign}(\w)=(-\lambda\;\text{sign}(w_1),\dots,-\lambda\;\text{sign}(w_d))$.
However, in general, given a set-valued mapping $G(\cdot)$, we may not know explicitly $f(\cdot)$ (if it exists) satisfying $G(\cdot)=\mathcal K[f](\cdot)$.
That is the reason in the proof, we only treat
$G(\cdot)$ as a general set-valued mapping, not the Krasovskii operator of some vector-valued function.
\end{rem}

\begin{proof}[Proof of Proposition \ref{ex-prop-2}]
Similar to the proof of Proposition \ref{ex-prop-1},
the Clarke gradient of $U(\w)$ is given by $\partial U(\w)=(1,\dots,1)^\top$
and then
$
\tilde {G}^*_{\{U\}}(\w)=G^*(\w).
$
Moreover, $V$ is continuously differentiable, $\partial V(\w)=(2w_1,\dots,2w_2)^\top=2\w$. Hence, the $\{U\}$-generalized derivative of $V$ in direction $F$ is given by
\begin{equation}\label{eq-ex3-1}
\dot{\bar V}_{\{U\}}^{G^*}(\w)=\max_{\vec q\in \tilde G^*_{\{U\}}(\w)} (\partial V(\w))^\top\vec q.
\end{equation}
Let $\vec q\in G^*(\w)$ be arbitrary, then
\begin{equation}\label{eq-ex3-2}
\vec q=-\lambda \w-\lambda \w^*+\bar{\vec q},\;\bar{\vec q}\in G_1(\w+\w^*).
\end{equation}
Since $\vec 0\in -\lambda \w^*+g_1(\w^*)$,
$-\lambda \w^*\in -g_1(\w^*)$. Therefore, we obtain from \eqref{eq-ex3-2} that
\begin{equation}\label{eq-ex3-3}
\vec q=-\lambda \w+\vec m+\bar {\vec q},\;\bar {\vec q}\in G_1(\w+\w^*),\;\text{(for some) }\vec m\in -G_1(\w^*).
\end{equation}
If we can prove
\begin{equation}\label{eq-ex3-4}
\w^\top\left[\vec m+\bar{\vec q}\right]\leq 0\text{ for all }\vec m\in -G_1(\w^*),\;\bar{\vec q}\in G_1(\w+\w^*),
\end{equation}
then combing \eqref{eq-ex3-1}, \eqref{eq-ex3-3}, and \eqref{eq-ex3-4}, one has
$
\dot{\bar V}_{\{U\}}^{G^*}(\w)<-\lambda\;|\w|^2.
$
Now we prove \eqref{eq-ex3-4}.
Three cases are considered next.

Case 1: $\E[y_n(\w^*)^\top \vec x_n]=1$. So, $\vec m\in -G_1(\w^*)=-\co\;\{\E[y_n\vec x_n],\vec 0\}$ and then $\vec m=-m\E[y_n\vec x_n]$ for some $m\in[0,1]$.
If $\E[y_n\w^\top\vec x_n]= 0$ then
$ G_1(\w+\w^*)=\co\;\{\E[y_n\vec x_n],\vec 0\}$.
As a consequence,
$\bar {\vec q}=\bar q \E[y_n\vec x_n]$, for some $\bar q\in [0,1]$.
Therefore,
 $\w^\top\left[\vec m^k+\bar{\vec q}^k\right]=(-m+\bar q)\E[y_n\w^\top\vec x_n]=0$ and \eqref{eq-ex3-4} is clear.
 If $\E[y_n\w^\top\vec x_n]>0$, then $G_1(\w+\w^*)=\{\vec 0\}$
and thus,
$\w^\top\left[\vec m^k+\bar{\vec q}^k\right]=-m\E[y_n\w^\top\vec x_n]\leq 0$
 and \eqref{eq-ex3-4} holds.
On the other hand, if $\E[y_n\w^\top \vec x_n]<0$,  $G_1(\w+\w^*)=\{\E[y_n\vec x_n]\}$
and thus,
$\w^\top\left[\vec m^k+\bar{\vec q}^k\right]=(-m+1)\E[y_n\w^\top\vec x_n]\leq 0$
and \eqref{eq-ex3-3} is satisfied.

Case 2: $\E[y_n(\w^*)^\top\vec x_n]>1$. Then $\vec m=\vec 0$ and so, for all $\bar{\vec q}\in G_1(\w+\w^*)$, $\vec m+\bar{\vec q}=\bar q\E[y\vec x]$, for some $\bar q\in [0,1]$.
As a result,
 \eqref{eq-ex3-3} holds if $\E[y_n\w^\top\vec x_n]\leq 0$.
Otherwise, if $\E[y_n\w^\top\vec x_n]>0$, then $G_1(\w+\w^*)=\{\vec 0\}$, so $\vec m+\bar{\vec q}=\vec 0$ and \eqref{eq-ex3-3} still holds.

Case 3: $\E[y_n(\w^*)^\top\vec x_n]<1$. This case is similar to case 2.
\end{proof}

\begin{proof}[Proof of Proposition \ref{ex-prop-3}]
Since $U$ is convex, it is regular. The Clarke gradient $\partial U$ of $U$ is given by
$$
\partial U(\vx)=\begin{cases}
\{(s(w_1),s(w_2))\} \text{ if }|w_1|\neq 1 \text{ and }|w_2|\neq 1,\\
\co\;\{0,\text{sign}(w_1)\}\times\{s(w_2)\} \text{ if }|w_1|=1 \text{ and }|w_2|\neq 1,\\
\{s(w_1)\}\times\co\;\{0,\text{sign}(w_2)\} \text{ if }|w_1|\neq 1 \text{ and }|w_2|= 1,\\
\co\;\{0,\text{sign}(w_1)\}\times \co\;\{0,\text{sign}(w_2)\} \text{ if }|w_1|= 1 \text{ and }|w_2|= 1,
\end{cases}
$$
where
$$
s(w):=\begin{cases}
0\text{ if }-1<w<1,\\
\text{sign}(w)\text{ otherwise}.
\end{cases}
$$
It is noted that
$$
G(\w)=\begin{cases}
\{(-w_1+w_2,-w_1-w_2)\}\text{ if }w_1\neq 1\text{ and }w_2\neq 1,\\
\{(-w_1+w_2,-w_1-w_2)\}+[-1,1]\times \{0\}\text{ if }w_1= 1\text{ and }w_2\neq 1,\\
\{(-w_1+w_2,-w_1-w_2)\}+\{0\}\times [-1,1]\text{ if }w_1\neq 1\text{ and }w_2= 1,\\
\{(-w_1+w_2,-w_1-w_2)\}+[-1,1]\times [-1,1]\text{ if }w_1= 1\text{ and }w_2= 1.
\end{cases}
$$
Therefore, direct calculation yields that
$$
M^G_{\{U\}}(\w)=\begin{cases}
G(\w)\text{ if }|w_1|\neq 1\text{ and }|w_2|\neq 1,\\
\emptyset \text{ otherwise}.
\end{cases}
$$
Equivalently, one has
$$
M^G_{\{U\}}(\w)=\begin{cases}
\{(-w_1+w_2,-w_1-w_2)\}\text{ if }|w_1|\neq 1\text{ and }|w_2|\neq 1,\\
\emptyset \text{ otherwise}.
\end{cases}
$$
We have $\tilde G_{\{U\}}(\w)=M_{\{U\}}^G(\w)$; and $\partial V(\w)=2(w_1,w_2)^\top$.
Hence, the $\{U\}$-generalized derivative of $V$ in direction $G$ is given by
$$
\begin{aligned}
\dot{\bar V}_{\{U\}}^G(\w)=& \max_{\vec q\in \tilde G_{\{U\}}(\vx)} \partial V(\w)^\top\vec q\\
=&
\begin{cases}
-2\|\w\|^2\text{ if }|w_1|\neq 1\text{ and }|w_2|\neq 1,\\
-\infty\text{ otherwise}.
\end{cases}
\end{aligned}
$$
As a result, the proposition is proved.

\end{proof}

\subsection{Numerical Examples}\label{sec:num}
In this section, we provide some numerical examples to illustrate our findings.

\begin{example}\label{ex-num-1}
This example demonstrates
the results in Section \ref{sec:app2} as well as Theorem \ref{mth-6}.
We are concerned with the following optimization problem:
$
\text{Find }w^*\text{ to minimize }\E (h(w,\xi_n)+\beta_n)+\lambda\|w\|.
$
For simplicity,
we consider a real-valued function with
$h(w,\xi_n)=\frac 12(w+\xi_n-1)^2$, $\lambda=0.7$,  $\{\xi_n\}$ is a sequence of random variables with mean 0 and finite variance,
and $\{\beta_n\}$ is a sequence of random variables (assumed to be independent for simplicity) satisfying variance of $\beta_n \leq c_n$. We vary $c_n$ to see the effect of the bias on the convergence of the algorithm.
The problem becomes:
$
\text{find minimizer }w^*\text{ of }
\E (h(w,\xi_n)+\beta_n) + \lambda|w|=\frac 12(w-1)^2+0.7|w|.
$
Direct calculation shows that the true value is $w^*=0.3$.

Suppose that
only the noisy observations or measurements
$h(w_n,\xi_n)+\beta_n)$ are available, we can construct a recursive algorithm
\begin{equation}\label{num-eq-1}
\begin{aligned}
& w_{n+1}=w_n+a_n\left[(1+\xi_n-w_n)+\beta_n+g(w_n)\right].
\end{aligned}
\end{equation}
In  each iteration, we choose
$
g(w)\in\begin{cases}
\{-1\}\text{ if }w>0,\\
[-1,1]\text{ if }w=0,\\
\{1\}\text{ if }w<0.
\end{cases}
$
The numerical results are given in Table \ref{tab-1}.
\begin{center}
\captionof{table}{Numerical results of algorithm \ref{num-eq-1}}\label{tab-1}
\scalebox{1}{
\resizebox{0.9\textwidth}{!}{
\begin{tabular}{ |p{4cm}||p{1.5cm}|p{1.5cm}|p{1.5cm}|p{1cm}|p{1cm}|p{1cm}|p{1cm}|  }

\hline
 \multicolumn{2}{|c|}{Examples}&ex1&ex2&ex3&ex4&ex5&ex6\\
 \hline
num. of iterations & $\hat n$ &$10^3$  &$10^3$ &$10^3$ & $10^3$&  $10^3$&$10^3$\\
 \hline
num. of repeat& &$10^3$  &$10^3$ &$10^3$ & $10^3$&$10^3$&$10^3$\\
\hline
initial value&$w_0$& 5&50&5   &5 &5&5\\
 \hline
variance $c_n$ of the bias &$c_n$  & $1/n$& $1/n$ &$n^{-0.5}$ &1&10&10\\
 \hline
step sizes&$a_n$   &$1/\sqrt{n}$ &$1/\sqrt{n}$    &$1/\sqrt{n}$  &$1/\sqrt{n}$&$1/\sqrt{n}$&1/n\\

\hline
 error&$|\hat w-w^*|$&   $10^{-3}$&$10^{-3}$&$10^{-3}$  & 0.01   &0.37&0.12\\
 \hline
\end{tabular}
}
}
\end{center}

In Table \ref{tab-1}, columns ``ex1" and ``ex2" show the minimizer is globally attractive.
Columns ``ex1", ``ex3", and ``ex4" show the dependence of the convergence rate on how fast the bias
going to $0$.
If $c_n$ is large,
the algorithm may not converge fast enough to the true minimizer,
but just in its neighborhood, which is shown in columns ``ex5", and ``ex6".

The relation between $c_n$ and the mean of  the error (of approximated value) after repeating algorithm \ref{num-eq-1} (with $\hat n=1000$ iterations for each) is shown in Figure \ref{pic-ex1} (the left one).
This shows the numerical results for the theoretical one in Theorem \ref{mth-6}, i.e.,
the difference between the approximated value and the true value tends to $0$ when $\eta:=\limsup_{n} \|\beta_n\|\to 0$.
It is worth noting that the graph depends on $\beta_n$ through $\eta$ in two ways. First, $\eta$ and the upper bound of errors inherit the behavior of normal distributions $\beta_n$.
Second, they also depend on the magnitude of $\beta_n$.
As a results, the graph describes the relationship between  errors and bias $\eta$ varies like normal distributions (at each fixed $\eta$)
with
non-zero
means
(but tending to $0$ as $\eta\to 0$).
The graph on the right
 in Figure \ref{pic-ex1}
shows the convergence rate to $0$ of $c_n$ affects  the convergence of the algorithm.

\begin{figure}[hpt]
\centering
\includegraphics[width=6cm, height=5cm]{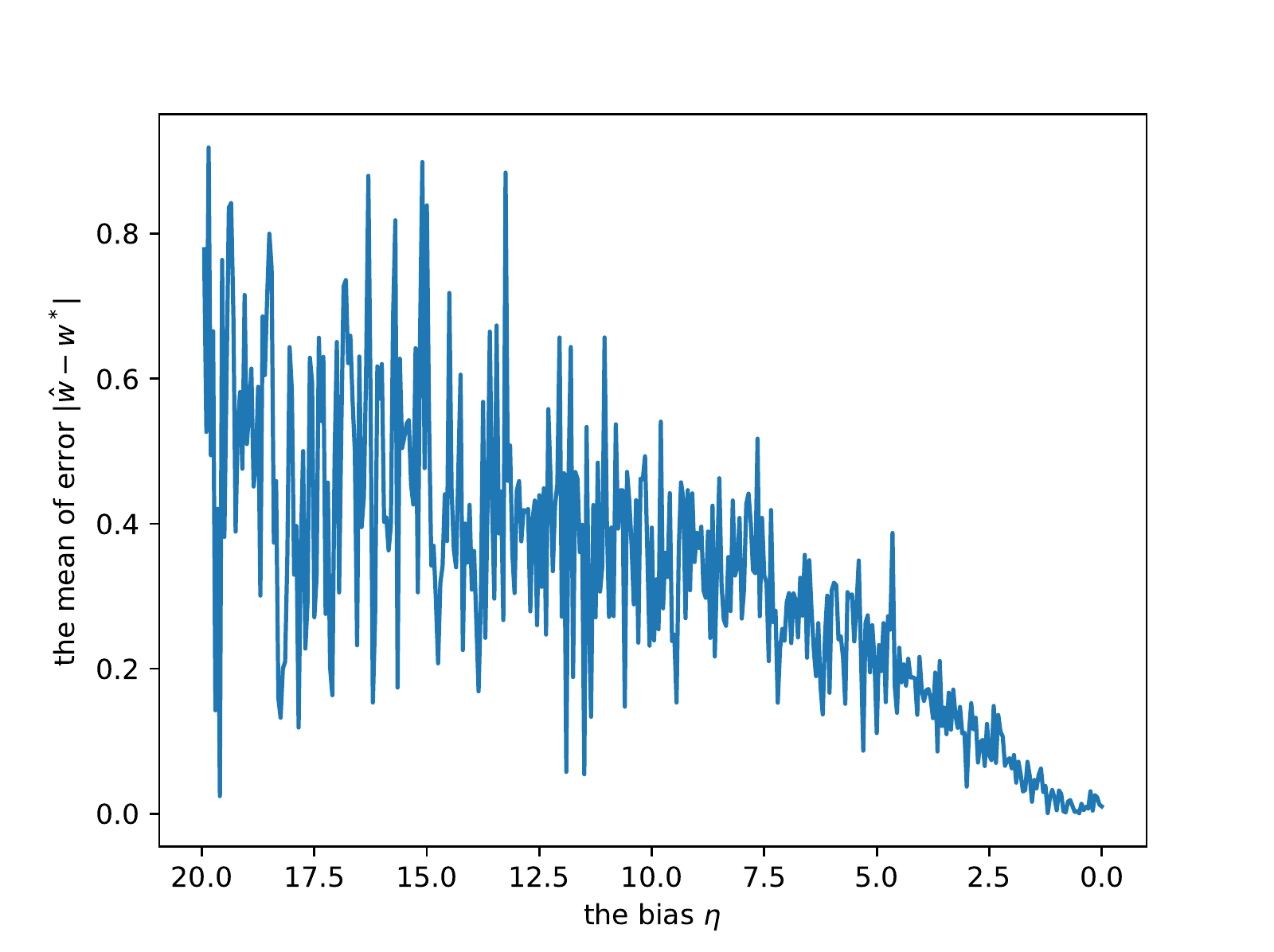}
\includegraphics[width=6cm, height=5cm]{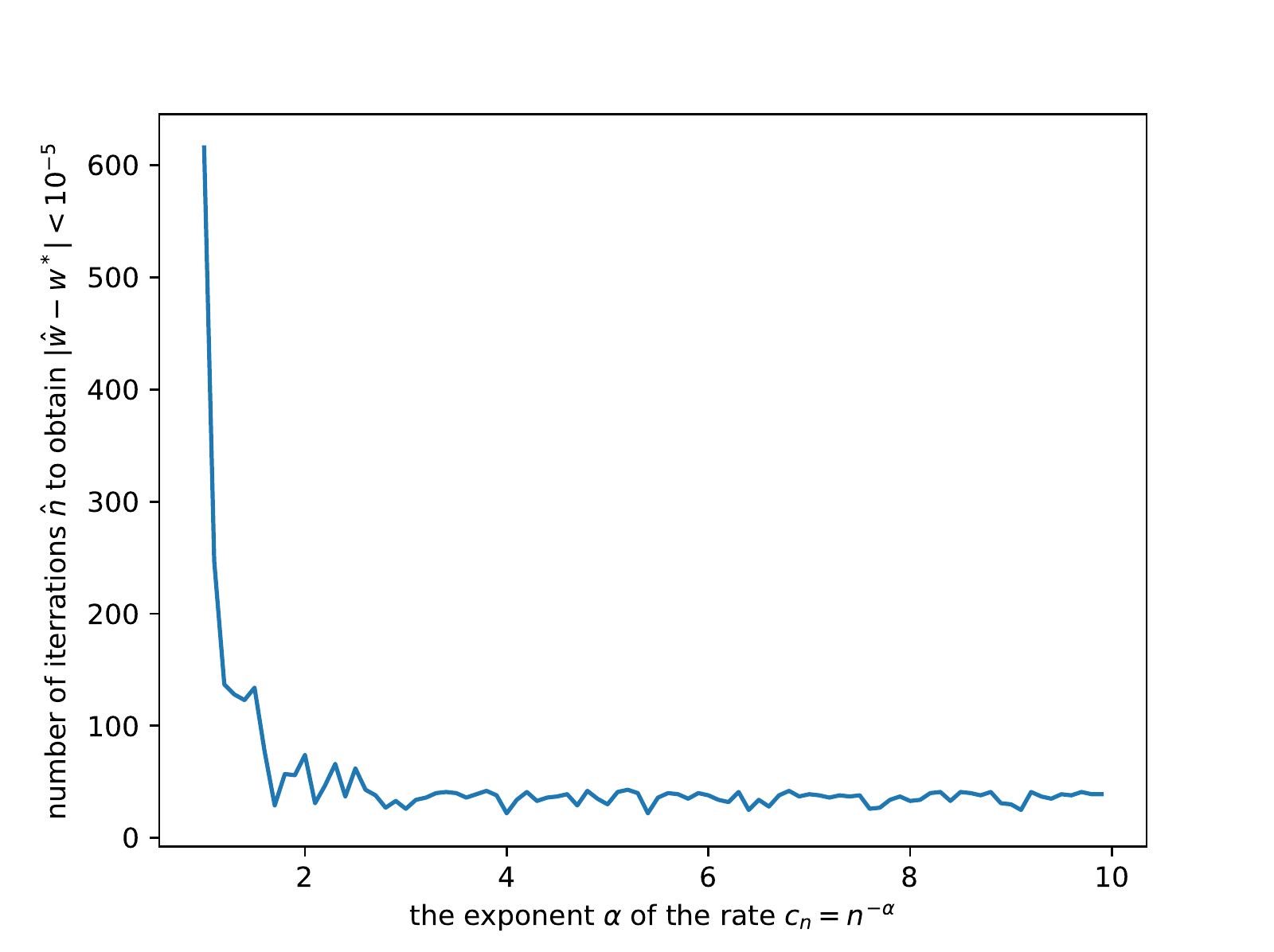}
\caption{Numerical results for Example \ref{ex-num-1}. Left: relation between bias $\eta$ and $|\hat w-w^*|$. Right: relation between number of iterations to obtain $|\hat w-w^*|\leq 10^{-5}$ and the exponent $\alpha$ of $c_n=n^{-\alpha}$ (describing the convergence rate to 0 of unbiased term $\vbe_n$).}\label{pic-ex1}
\end{figure}

\end{example}

\begin{example}\label{ex-num-2}
This example is concerned with using results in Section \ref{sec:app3}.
Consider the following problem (for better visualization, we consider $\w\in\R^2$):
$$
\text{Find minimizer }\w^* \text{ of }\lambda \|\w\|^2+\max\{0,1-\E\w^\top \vec{h}(\xi_n)\}.
$$
Assume that $\{\vec {h}(\xi_n)\}$
is a sequence of independent two-dimensional Gaussian vectors
with mean $(1,2)^\top$ and covariance matrix $I_{2\times 2}$ (two-dimensional identity matrix),
and $\lambda=1$.
A closed-form solution
is $\w^*=(0.2,0.4)^\top$.
We will design an algorithm to locate the optimum with
noise corrupted measurements or observations
$\vec {h}(\xi_n)$ and bias $\beta_n$. Denote $\vec x_n= \vec {h}(\xi_n)$.
Consider the algorithm with step sizes $a_n=1/\sqrt n$,
\begin{equation}\label{num-algo-2}
\begin{aligned}
& \w_{n+1}=\w_n+a_n\left[-2 \w_n+g(\w_n,\vec x_n)\right], \\
\end{aligned}
\end{equation}
where
$
g(\w,\vec x)\in\begin{cases}
\{ \vec 0\}\text{ if } \w^\top \vec x>1,\\
\co\;\{\vec 0,\vec x\}\text{ if }\w^\top \vec x=1,\\
\{\vec x\}\text{ if } \w^\top \vec x<1.
\end{cases}
$
Let $\w_0=(3,5)^\top$, with
1000
replications
(i.e., run algorithm \ref{num-algo-2} $1000$ times),
the numerical results are given in Figure \ref{pic-ex-2}.
Moreover,
the mean of $\hat \w$ is $(0.2+10^{-3}, 0.4+10^{-3})^\top$.

\begin{figure}[hpt]
\centering
\includegraphics[width=6cm, height=5cm]{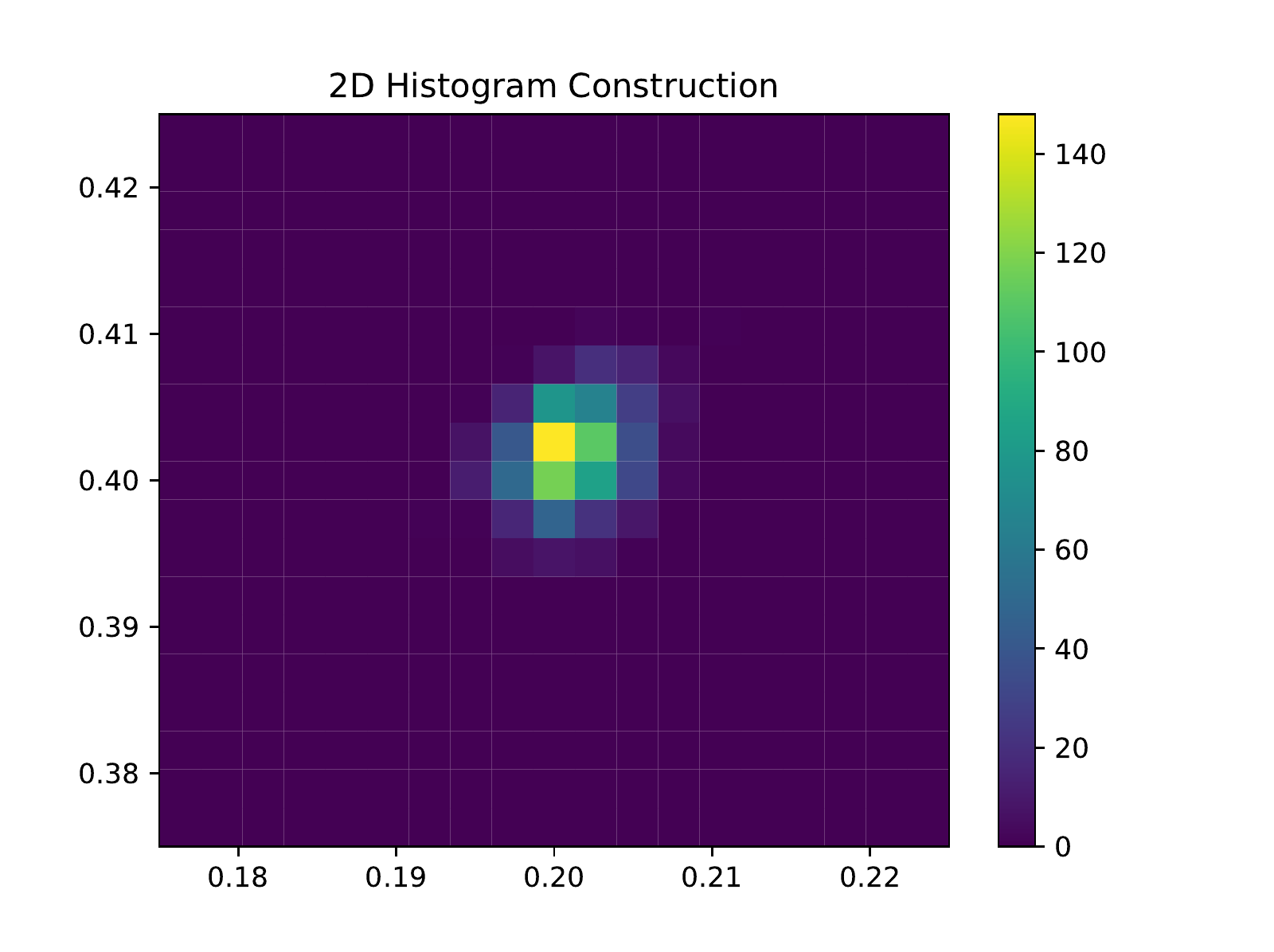}
\includegraphics[width=6cm, height=5cm]{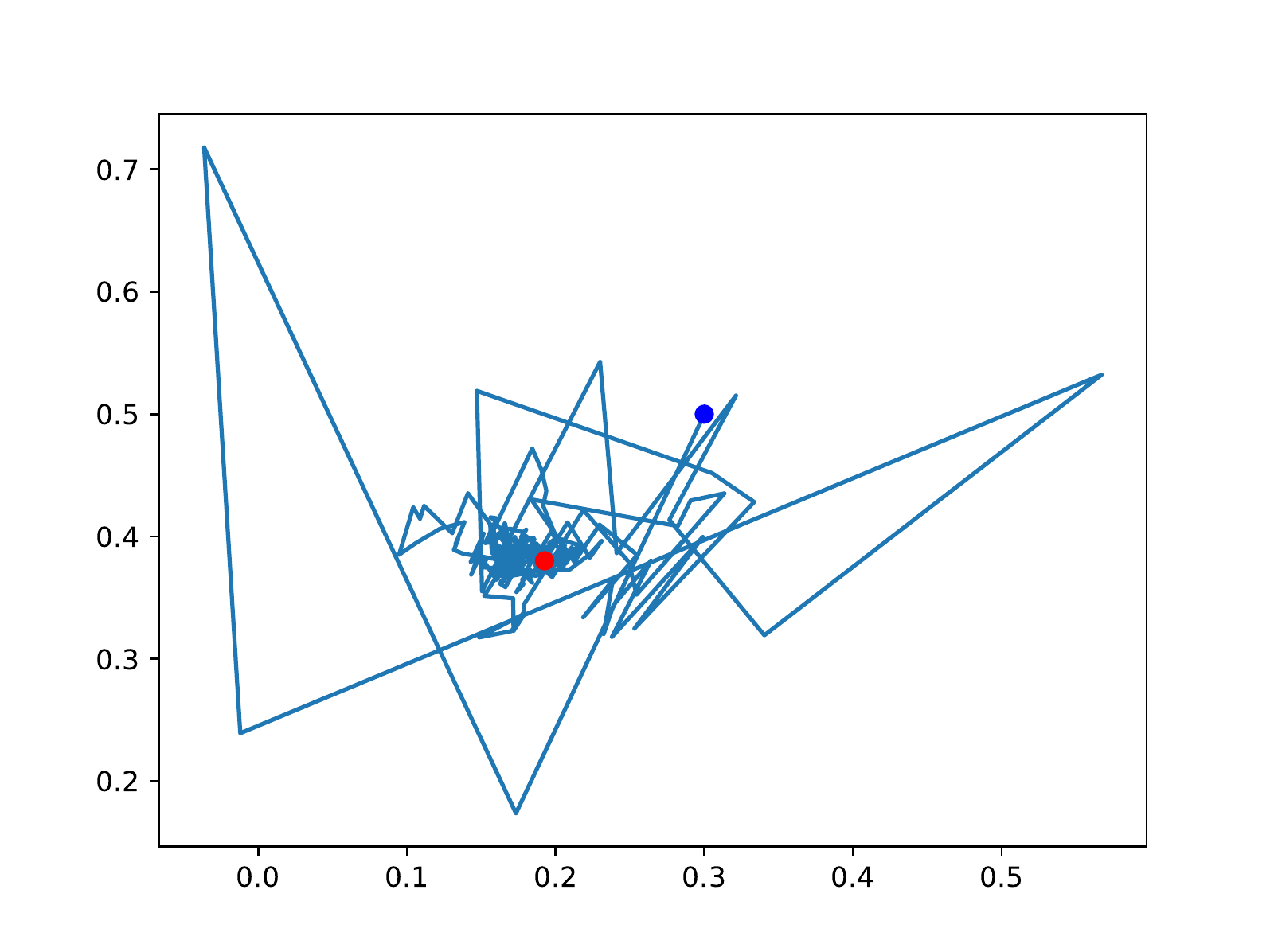}
\caption{Numerical results for Example \ref{ex-num-2}: Left: 2D histogram of $\hat \w$. Right: a trajectory of $\w_n$ (the solid blue and solid red points are the starting and ending points).
}\label{pic-ex-2}
\end{figure}

\end{example}

\begin{example}\label{ex-num-3}
This example is concerned with the results in Section \ref{sec:app4}.
We wish to find $\w^*$ such that $\vec 0\in G(\w^*)$, where
$
G(\w):
=
\big(-w_1+w_2+h(w_2),-w_1-w_2+h(w_1)\big)
$
with $\w=(w_1,w_2)^\top$
and
$
h(w)=\begin{cases}
0\text{ if }w\neq 1,\\
[-1,1]\text{ if }w=1.
\end{cases}
$
The true value is $\w^*=(0,0)^\top$. Consider the stochastic algorithm for the above problem when the observations are corrupted by random disturbances with step sizes $a_n=1/\sqrt n$ and
\begin{equation}\label{num-algo-3}
\begin{aligned}
& \w_{n+1}=\w_n+a_n\left[g(\w_n)+\vbe_n\right],\;g(\w_n)\in G(\w_n),
\end{aligned}
\end{equation}
and
$\vbe_n=(\beta_n^1,\beta_n^2)^\top$ so that $\{\vbe_n\}$
is a sequence of i.i.d. normal random variables with mean $(0,0)^\top$ and covariance being the identity matrix.
We  consider two initial points $\w_0=(1,1)^\top$, (near the minimizer)
and $\w_0=(10,-20)^\top$, (far away from the minimizer).
Running algorithm \ref{num-algo-3} $1000$ times,
we obtain the mean of $\hat\w$ to be $(10^{-3},10^{-2})^\top$
for both cases.
A  histogram and a trajectory of $\{\w_n\}$ (in the case of $\w_0=(1,1)^\top$) are shown in Figure \ref{pic-ex3}.

\begin{figure}[hpt]
\centering
\includegraphics[width=6cm, height=5cm]{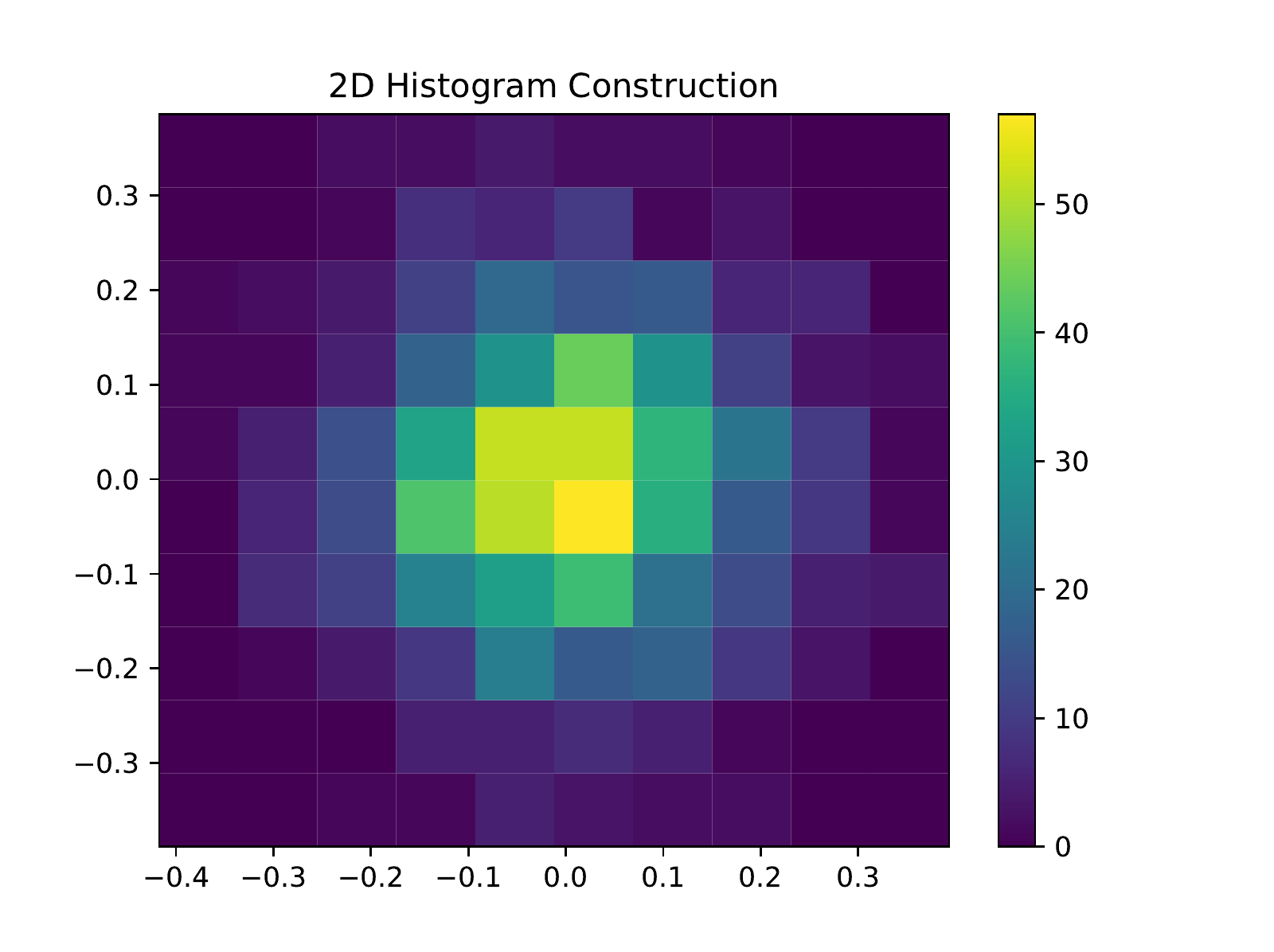}
\includegraphics[width=6.1cm, height=5cm]{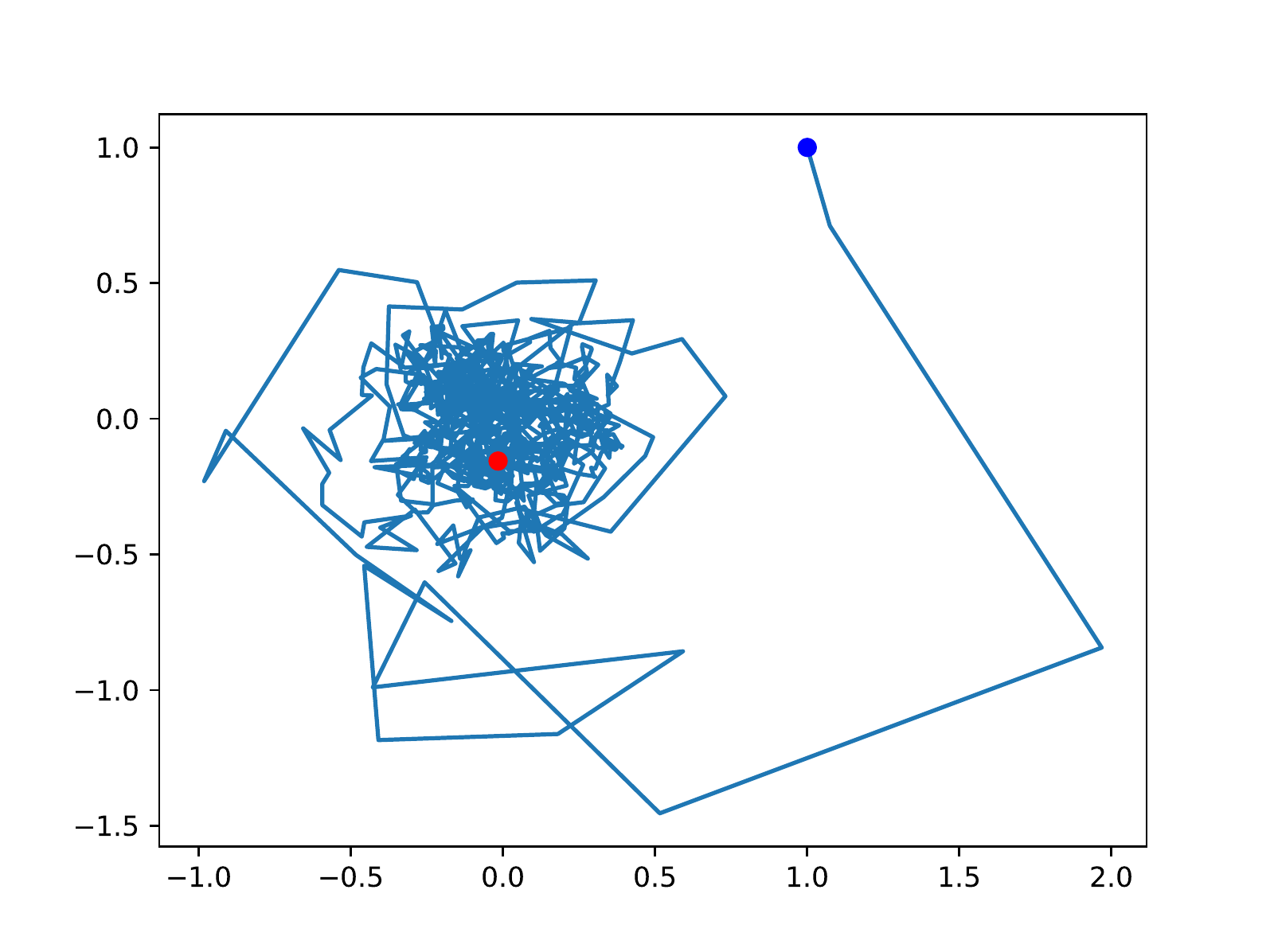}
\caption{Numerical results in Example \ref{ex-num-3}. Left:  2D histogram of $\hat\w$.
Right: a trajectory of $\w_n$ (the solid blue and solid red points are the starting and ending points).}\label{pic-ex3}
\end{figure}

\end{example}

\begin{example}\label{ex-num-4}
This example considers
the comments in Remark \ref{rem1}.
We will give an example to show that if conditions on stability of the zero points are violated, the sequence obtaining by stochastic approximation may not converge to the right points
even if the algorithm starts from one of the optima.
Assume that
$$
\vh(\w)=\begin{cases}
[0,1]\times [-2,1]\text{ if }\w=(2,2)^\top,\\
\{0\}\times [-2,-1]\text{ if }1\leq w_1\leq 2, -1<w_2\leq 2,\w\neq (2,2)^\top, \\
[-2,-1]\times \{0\}\text{ if } -1< w_1\leq 2, -2<w_2\leq -1,\\
\{0\}\times[1,2]\text{ if } -2< w_1\leq -1, -2\leq w_2<-1,\\
[1,2]\times\{0\}\text{ if }-2\leq w_1<1, 1\leq w_2\leq 2,\\
\{(-0.005w_1,-0.005w_2)^\top\}\text{ otherwise},
\end{cases}
$$
and consider the problem:
$
\text{find }\w^*\text{ such that }\vec 0\in \E (\vh(\w^*)+\vbe_n),
$
where $\{\vbe_n\}$ is a sequence of i.i.d. normal random variables with mean $(0,0)^\top$ and covariance being the identity matrix.
The optimum
is given by
$\w^*\in\{(0,0)^\top,(2,2)^\top\}$.
Consider a stochastic approximation algorithm for this problem as follow
\begin{equation}\label{num-algo-4}
\begin{aligned}
& \w_{n+1}=\w_n+\frac 1{\sqrt n}\left[g(\w_n)+\vbe_n\right],\;g(\w_n)\in \vh(\w_n).
\end{aligned}
\end{equation}

We run the algorithm with $\w_0=(2,2)^\top$ for 10 millions iterations and note the points at 1 million, 2 million, $\dots$, 10 million iterations. The algorithm does not converge even the number of iterations is large. In fact,
$\{\w_n\}$ tends to be close to some subset of chain-recurrent points, which are strictly larger than the set of the roots.
The numerical results are shown in Figure \ref{pic-ex4}.

\begin{figure}[hpt]
\centering
\includegraphics[width=6cm, height=5cm]{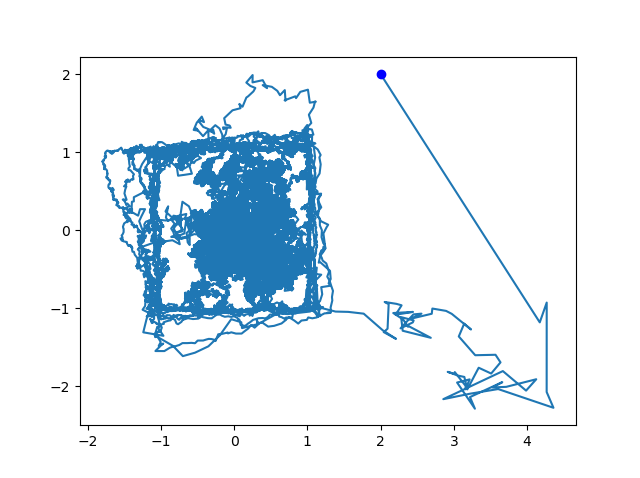}
\includegraphics[width=6cm, height=5cm]{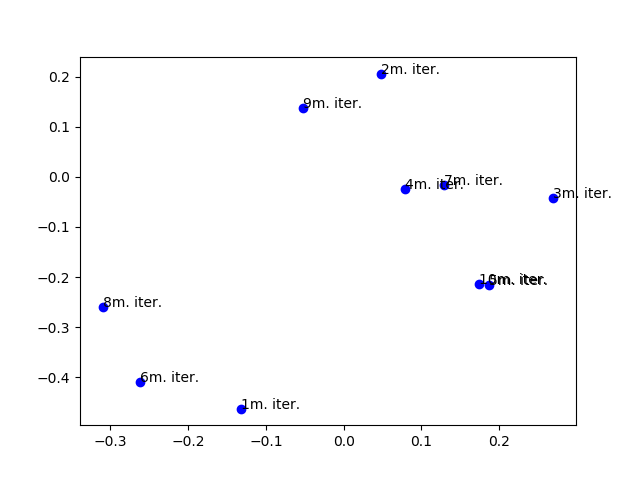}
\caption{Numerical results for Example \ref{ex-num-4}. Left: a trajectory of $\{\w_n\}$, starting from $(2,2)$ (the solid blue point). Right: The points at 1 million, 2 million, $\dots$, 10 million iterations.
}\label{pic-ex4}
\end{figure}

\end{example}

\section{Concluding Remarks}\label{sec:con}
Motivated by a wide variety of
applications,
we considered
stochastic approximation with discontinuous dynamics and set-valued mappings. Unconstrained,
constrained,  and biased algorithms are considered.  The traditional approach in the existing literature cannot be used due to the discontinuity. Another main challenge is that we have to deal with
set-valued mappings.

Under broad conditions, we use the theory of
ODEs with discontinuous right-hand side,  differential inclusions, and set-valued analysis, to overcome the difficulties of
lack of continuity.  Concepts in non-smooth analysis, set-valued dynamic systems, and novel results in stability of differential inclusions enable us to obtain the convergence to the desired optimal points.
The continuation of chain recurrent set of
the limit differential inclusions enables us to obtain desired bounds in biased stochastic approximation.
The rates of convergence are obtained by using
the newly developed concepts in set-valued analysis ($T$-differentiability) and stochastic differential inclusions (weak compactness of the set of solutions).

Then we make use of our results in applications including  Markov decision processes, stochastic sub-gradient descent algorithms, minimizing
$L^1$ regularized loss functions (online Lasso algorithms, among others), and Pegasos algorithms (in SVMs classification).
It is shown that convergence w.p.1 of these stochastic algorithms can be obtained using our results.
It is also demonstrated that
our results can be used to prove convergence in certain cases, which cannot be done otherwise in the existing literature. New insights for analyzing convergence, rates of convergence, and robustness of these algorithms are also obtained.

\appendix
\section{Appendix: Mathematics Preparation}
\label{sec:pre}
\subsection{ODEs with Discontinuous Right-hand Sides and Differential Inclusions}\label{sec:ode}
This section is devoted to
ODEs with discontinuous right-hand sides and differential inclusions.
Consider the
differential equation
\begin{equation}\label{eq-ode}
\dot \vX(t)=f(\vX(t)).
\end{equation}
Given a function $f:\R^d\to\R^d$, define the set-valued function $\mathcal K[f]:\R^d\to2^{\R^d}$,  known as the Krasovskii operator, as follows
$$
\mathcal K[f](\vy)=\cap_{\delta>0}\bar{\text{co}}\;f(B(\vy,\delta)).
$$

\begin{lem}\label{lem-K}
	If $f$ is continuous, then $\mathcal K[f](\vx)=\{f(\vx)\}$.
		If $f,g$ are locally bounded and either $f$ or $g$ is continuous then $\mathcal K[f+g](\vx)=\mathcal K[f](\vx)+\mathcal K[g](\vx)$.
\end{lem}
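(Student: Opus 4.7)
\noindent\textbf{Proof proposal for Lemma \ref{lem-K}.}
The plan is to prove the two claims separately, using in an essential way that local boundedness makes each $\bar{\co}(f(B(\vy,\delta)))$ a compact convex set, so that the nested intersection behaves well with translations and Minkowski sums. For the first claim, I would fix $\vy$ and $\eps>0$; by continuity there is a $\delta_\eps>0$ with $f(B(\vy,\delta_\eps))\subset \bar B(f(\vy),\eps)$, and since the right-hand side is closed and convex,
\[
\bar{\co}\,f(B(\vy,\delta_\eps))\subset \bar B(f(\vy),\eps).
\]
Intersecting over $\eps>0$ gives $\mathcal K[f](\vy)\subset\{f(\vy)\}$, while the reverse inclusion is immediate from $f(\vy)\in f(B(\vy,\delta))$ for every $\delta$.

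For the second claim, I would first record the following general fact, which I will use for both inclusions: if $\{A_\delta\}_{\delta>0}$ and $\{B_\delta\}_{\delta>0}$ are two families of nonempty, compact, convex sets in $\R^d$ that are nondecreasing as $\delta\downarrow 0$ and all contained in a common bounded set, then
\[
\bigcap_{\delta>0}(A_\delta+B_\delta)=\Bigl(\bigcap_{\delta>0}A_\delta\Bigr)+\Bigl(\bigcap_{\delta>0}B_\delta\Bigr).
\]
The nontrivial inclusion follows by taking $z$ in the left side, writing $z=a_\delta+b_\delta$ with $a_\delta\in A_\delta,b_\delta\in B_\delta$, extracting a subsequence $a_{\delta_n}\to a$ by compactness, using monotonicity/closedness to conclude $a\in\bigcap_\delta A_\delta$, and letting $b:=z-a$. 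Local boundedness of $f$ and $g$ makes the families $\bar{\co}\,f(B(\vx,\delta))$ and $\bar{\co}\,g(B(\vx,\delta))$ satisfy the hypotheses for $\delta$ small.

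For the inclusion $\mathcal K[f+g](\vx)\subset \mathcal K[f](\vx)+\mathcal K[g](\vx)$, I would start from $(f+g)(B(\vx,\delta))\subset f(B(\vx,\delta))+g(B(\vx,\delta))$, then take convex hulls (using $\co(A+B)=\co A+\co B$) and closures (using $\bar A+\bar B=\overline{A+B}$ for bounded sets) to obtain
\[
\bar{\co}\,(f+g)(B(\vx,\delta))\subset \bar{\co}\,f(B(\vx,\delta))+\bar{\co}\,g(B(\vx,\delta)),
\]
and then apply the intersection lemma above. For the reverse inclusion, I would assume (without loss of generality) that $g$ is continuous, so $\mathcal K[g](\vx)=\{g(\vx)\}$ by the first claim. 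Given $\eps>0$, pick $\delta_\eps$ with $|g(\vy)-g(\vx)|<\eps$ on $B(\vx,\delta_\eps)$; then for $\delta<\delta_\eps$,
\[
f(B(\vx,\delta))+\{g(\vx)\}\subset (f+g)(B(\vx,\delta))+\eps\bar B,
\]
and taking convex closure, then intersecting over $\delta$ (invoking the intersection lemma again, with $\eps\bar B$ as the second family), gives $\mathcal K[f](\vx)+g(\vx)\subset \mathcal K[f+g](\vx)+\eps\bar B$ for every $\eps>0$. Letting $\eps\downarrow 0$ and using compactness of $\mathcal K[f+g](\vx)$ completes the proof.

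The main obstacle is the commutation of the nested intersection with Minkowski addition; this is where local boundedness enters to upgrade the sets to compact convex ones so the above diagonal/extraction argument runs. Once that lemma is in hand, both directions are short, with continuity of one summand used only in the reverse inclusion to collapse $\mathcal K[g](\vx)$ to a point and to produce the uniform $\eps$-approximation on $B(\vx,\delta_\eps)$.
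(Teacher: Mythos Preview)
Your argument is correct and self-contained, but it follows a different route from the paper. The paper invokes the Paden--Sastry characterization $\mathcal K[h](\vx)=\text{co}\{\lim h(\vx_i):\vx_i\to\vx\}$ as a black box; with that in hand, continuity of $g$ gives $\lim(f+g)(\vx_i)=\lim f(\vx_i)+g(\vx)$ whenever the left side exists, so the set of subsequential limits of $f+g$ at $\vx$ is exactly the set of subsequential limits of $f$ shifted by $g(\vx)$, and taking convex hulls finishes the proof in one line. Your approach instead stays with the original definition and isolates a clean auxiliary fact---the commutation of nested intersections with Minkowski sums for compact convex families---which you then apply twice. This is longer but has the advantage of not appealing to an outside reference, and the intersection lemma you prove is of independent use. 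One small slip: your families $\bar{\co}\,f(B(\vx,\delta))$ are \emph{nonincreasing} as $\delta\downarrow 0$, not nondecreasing; the monotonicity direction does not affect your compactness/extraction argument, but you should state it correctly.
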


\begin{proof}
	The first assertion is obvious.
	By \cite[Theorem 1]{Pad87}, we have that
	$
	\mathcal K[g](\vx)=\text{co}\{\lim g(\vx_i)| \vx_i\to \vx\}.
	$
	Using this fact, the lemma can be  proved; some details are  omitted.
\end{proof}

\begin{deff}{\rm  (see \cite{H79})
A function $\boldsymbol{\varphi}:J\to\R^d$ ($J$ is an interval in $\R$) is said to be a Krasovskii
solution to \eqref{eq-ode} if it is absolutely continuous
on each compact subinterval of $J$
and is a solution of the differential inclusion
\begin{equation}\label{eq-Filip}
\dot \vX(t)\in \mathcal K[f](\vX(t)),
\end{equation}
i.e., $\boldsymbol{\varphi}$ satisfies \eqref{eq-Filip} almost every $t\in J$.
Moreover, $\boldsymbol{\varphi}$ is said to be a Carath\'eodory solution if it satisfies the \eqref{eq-ode} for almost every $t\in J$, or equivalently, it satisfies the corresponding integral equation.}
\end{deff}

\begin{deff}\label{def-usc}
{\rm A set-valued mapping $F$ is
upper semicontinuous at a given $\bar \vx$, if for every open set $U$, $F(\bar \vx)\subset U$, there is an open set $V$ such that $\bar \vx\in V$ and $F(\vx)\subset V$ for every $\vx\in V$.}
\end{deff}

Note that if $f$ is a locally bounded function, then $\mathcal K[f](\cdot)$ is upper semicontinuous,
nonempty, compact, and convex.
The following theorem of the existence of Krasovskii solution can be found in \cite{H79}.

\begin{lem}
If $f:\R^d\to\R^d$ is a locally bounded function,
 there exists at least a Krasovskii
 solution of \eqref{eq-Filip} starting from any initial condition.
\end{lem}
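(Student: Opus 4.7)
The plan is a local Peano-type existence argument tailored to the differential inclusion $\dot{\vX}(t)\in\mathcal K[f](\vX(t))$ with the initial condition $\vX(0)=\vx_0$; a standard ``patching'' then yields a solution on an arbitrary interval. First I would fix $r>0$ such that $f$ is bounded by some $M>0$ on $\bar B(\vx_0,r)$, which is possible by the local boundedness hypothesis. Because $\mathcal K[f](\vy)=\cap_{\delta>0}\co f(B(\vy,\delta))$, one obtains $\mathcal K[f](\vy)\subset M\bar{B}$ for every $\vy\in B(\vx_0,r/2)$ (taking $\delta$ small enough so that $B(\vy,\delta)\subset B(\vx_0,r)$). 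Put $\tau:=r/(2M)$; any $M$-Lipschitz curve starting at $\vx_0$ stays inside $B(\vx_0,r/2)$ on $[0,\tau]$.

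Next I would construct polygonal approximations on $[0,\tau]$. For each $n$, partition $[0,\tau]$ into subintervals of length $\tau/n$ with nodes $0=s_0^n<s_1^n<\dots<s_n^n=\tau$, and inductively pick $\vec v_k^n\in\mathcal K[f](\vX_n(s_k^n))$ (nonempty since $\mathcal K[f]$ has nonempty values) and set $\vX_n$ linear on $[s_k^n,s_{k+1}^n]$ with slope $\vec v_k^n$, starting from $\vX_n(0)=\vx_0$. The bound $|\vec v_k^n|\le M$ shows that $\vX_n$ is $M$-Lipschitz and stays in $B(\vx_0,r/2)$, so $\{\vX_n\}$ is uniformly bounded and equicontinuous on $[0,\tau]$. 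By Arzel\`a--Ascoli a subsequence (not relabeled) converges uniformly to some $M$-Lipschitz (hence absolutely continuous) $\vX:[0,\tau]\to \R^d$ with $\vX(0)=\vx_0$.

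The final and most delicate step is to identify $\vX$ as a Krasovskii solution. Writing the polygonal construction in integral form, for every $s<t$ in $[0,\tau]$,
\begin{equation*}
\vX_n(t)-\vX_n(s)\in \int_s^t \mathcal K[f]\bigl(\vX_n(\sigma_n(r))\bigr)\,dr,
\end{equation*}
where $\sigma_n(r)$ denotes the largest grid point $\le r$. Given $\eps>0$, uniform convergence of $\vX_n$ to $\vX$ together with $|\vX_n(r)-\vX_n(\sigma_n(r))|\le M\tau/n$ gives, for $n$ large enough, $\vX_n(\sigma_n(r))\in \vX(r)+\eps B$; using that $\mathcal K[f]$ is monotone under enlargement (a direct consequence of its definition) we obtain
\begin{equation*}
\vX_n(t)-\vX_n(s)\in \int_s^t \co\,f\bigl(\vX(r)+2\eps B\bigr)\,dr.
\end{equation*}
Passing to the limit $n\to\infty$ with the aid of Proposition \ref{lem-41}(a) and then applying Proposition \ref{lem-41}(b) yields $\dot{\vX}(t)\in\co\,f(\vX(t)+2\eps\bar B)$ for a.e.\ $t$, hence, after intersecting over $\eps>0$, $\dot{\vX}(t)\in\mathcal K[f](\vX(t))$ for a.e.\ $t\in[0,\tau]$.

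The main obstacle is precisely this limiting step: one must commute a pointwise differential inclusion governed by the upper semicontinuous, convex-valued map $\mathcal K[f]$ with uniform convergence of the approximants. The argument above succeeds because $\mathcal K[f]$ has nonempty, compact, convex values and because its very definition as a nested intersection over $\delta$-neighborhoods encodes exactly the ``closure under enlargement'' property needed to absorb the approximation error $\vX_n(\sigma_n(r))-\vX(r)$. Finally, to extend the local solution to $[0,\infty)$, I would iterate the construction: given a solution on $[0,\tau]$, restart at $\vX(\tau)$ with a new $r'$ adapted to the local bound of $f$ near $\vX(\tau)$, and use a standard Zorn's lemma / maximal interval argument; no finite explosion is possible as long as one stays in a region where $f$ is locally bounded.
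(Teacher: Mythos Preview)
The paper does not prove this lemma; it simply refers the reader to \cite{H79}. Your argument, by contrast, supplies a complete and correct Peano-type existence proof: Euler polygons built from selections of $\mathcal K[f]$, Arzel\`a--Ascoli compactness, and identification of the limit via the inclusion $\mathcal K[f](\vy)\subset\co f(B(\vy,\delta))$ combined with Proposition~\ref{lem-41}. This is the standard route in the literature and is exactly in the spirit of how the paper itself handles the analogous limiting step in the proof of Theorem~\ref{mth-1}.

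One caveat on your closing paragraph: the claim that iteration plus Zorn's lemma extends the solution to $[0,\infty)$ with ``no finite explosion \dots\ as long as one stays in a region where $f$ is locally bounded'' is not correct as a general statement---local boundedness of $f$ does not preclude blow-up (take $f(x)=x^2$ on $\R$). However, the paper's definition of a Krasovskii solution asks only for an absolutely continuous function on \emph{some} interval $J$, so your local construction on $[0,\tau]$ already establishes the lemma as stated; global extension is neither required nor asserted.
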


\begin{rem}
Some remarks are in order;
for more details, we refer to \cite{H79}.
\item {\rm (i)} For the uniqueness of Krasovskii solution,
we need further conditions for $f(\cdot)$, which can be found in \cite{H79}. The Carath\'eodory solutions are always Krasovskii solutions (if both of them exist), but the converse is not true.
If $f$ is continuous, they are the same.
\item {\rm (ii)}
Consider an example with $f(\cdot)=-\text{sign}(\cdot):\R\to\R$, i.e.,
$f(y)=
\begin{cases}
-1\text{ if }y>0,\\
0\text{ if }y=0,\\
1 \text{ if }y< 0.
\end{cases}$
In this case,
$
\mathcal K[f](y)=
\begin{cases}
\{-1\} \text{ if }y> 0,\\
[-1,1] \text{ if }y=0,\\
\{1\}\text{ if }y<0.
\end{cases}
$
%
%

\end{rem}


Next,
 ODEs with discontinuous right-hand sides are generalized
to
differential inclusions.

\begin{deff}
{\rm Let $F:\R^d\to 2^{\R^d}$ be a set-valued mapping. A solution to the differential inclusion
\begin{equation}\label{di-eq1}
\dot \vX(t)\in F(\vX(t))
\end{equation}
with initial point $\vx\in \R^d$ is an absolutely continuous function $\vX(\cdot):\R\to\R^d$ such that $\vX(0)=\vx$ and satisfies \eqref{di-eq1} for almost every $t\in\R$.}
\end{deff}

The following lemma shows that under Assumption  {\rm\ref{G1}} in our paper, the solutions of differential inclusion exists.

\begin{lem}\label{lem5.2} $($see \cite[Chapter 1 and Chapter 2]{AC84}$)$
Let $F:\R^d\to 2^{\R^d}$ be a set-valued map with values contained in a finite common ball and whose graph is closed.
Then $F$ is upper semicontinuous, and \eqref{di-eq1} admits at least one solution with any initial point.
\end{lem}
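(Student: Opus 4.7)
My approach is to prove the two assertions separately, both being classical results in set-valued analysis (see Aubin--Cellina \cite{AC84}).

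\textbf{Upper semicontinuity by contradiction.} Suppose $F$ is not upper semicontinuous at some $\bar{\vx}$: there exist an open set $U\supset F(\bar{\vx})$ together with sequences $\vx_n\to\bar{\vx}$ and $\vy_n\in F(\vx_n)\setminus U$. Since all values of $F$ lie in one fixed bounded ball, $\{\vy_n\}$ is bounded, so a subsequence converges to some $\vy$. The closed-graph hypothesis forces $(\bar{\vx},\vy)\in\text{Graph}(F)$, hence $\vy\in F(\bar{\vx})\subset U$; but $\vy_n$ lies in the closed set $\R^d\setminus U$, so $\vy\in\R^d\setminus U$, a contradiction. Thus $F$ is upper semicontinuous.

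\textbf{Existence via Euler polygons.} Fix $T>0$ and $\vx\in\R^d$; I would build approximate solutions $\vX_n$ on $[0,T]$ by the polygonal method. Partition $[0,T]$ at $t_k^n=kT/n$, set $\vX_n(0)=\vx$, pick any selection $\vy_k^n\in F(\vX_n(t_k^n))$ (possible since values are nonempty), and extend $\vX_n$ piecewise-affinely on $[t_k^n,t_{k+1}^n]$ with slope $\vy_k^n$. Because every value of $F$ is contained in a common ball of radius $R$, each $\vX_n$ is globally $R$-Lipschitz, so $\{\vX_n\}$ is equicontinuous and uniformly bounded; by Arzelà--Ascoli a subsequence converges uniformly to an absolutely continuous $\vX$. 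The integral form
$$\vX_n(t)-\vX_n(s)\in\int_s^t F\bigl(\vX_n(\lfloor r\rfloor_n)\bigr)\,dr,$$
combined with uniform convergence, yields for any $\varepsilon>0$ and $n$ large enough
$$\vX(t)-\vX(s)\in\int_s^t\co\,F\bigl(\vX(r)+\varepsilon B\bigr)\,dr.$$
Proposition~\ref{lem-41}(b) then gives $\dot{\vX}(t)\in\co\,F(\vX(t)+\varepsilon B)$ almost everywhere; intersecting over $\varepsilon\downarrow 0$ and applying Proposition~\ref{prop63} (which provides $\cap_{\varepsilon>0}\co\,F(\vx+\varepsilon B)=F(\vx)$ from the closed-graph and convex-value properties) gives $\dot{\vX}(t)\in F(\vX(t))$ a.e.

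\textbf{Main obstacle.} The delicate step is the final passage to the limit. Upper semicontinuity alone is not sufficient: convexity of the values (implicit via the setting of Assumption~\ref{G1} and used crucially in Proposition~\ref{prop63}) is essential to absorb the convex hulls $\co\,F(\vx+\varepsilon B)$ coming from the averaging of $\vy_k^n$ over small Euler intervals. Without convexity, classical Filippov-type counterexamples show the limit inclusion can fail. The two parts of the lemma are thus interlocked: Part~1 supplies exactly the regularity of $F$ that makes the integral-inclusion limit argument of Part~2 go through.
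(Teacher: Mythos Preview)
The paper does not prove this lemma; it is stated as a known result with a citation to Aubin--Cellina \cite{AC84}. Your proof is correct and follows the standard route found there: the closed-graph-plus-boundedness contradiction argument for upper semicontinuity, and Euler polygons with Arzel\`a--Ascoli for existence. Your choice to run the limit-passage through the paper's own Propositions~\ref{lem-41} and~\ref{prop63} is legitimate and even tidy, since those results are built precisely for such inclusions. One remark on logical order: in the paper, the proof of Proposition~\ref{prop63} itself invokes Lemma~\ref{lem5.2} to obtain upper semicontinuity, but since your Part~1 establishes upper semicontinuity directly before Proposition~\ref{prop63} is needed, there is no circularity. You also correctly identify that convexity and nonemptiness of the values---not explicit in the lemma statement but present in Assumption~\ref{G1} and in the Aubin--Cellina hypotheses---are indispensable for the existence half; without convexity the Euler-limit argument fails at exactly the step you flag.
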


\subsection{Non-smooth Analysis: Set-valued Derivative and $\mathcal{U}$-generalized Derivative}\label{sec:non}
In this section, we provide some definitions of generalized derivatives in non-smooth analysis, which will be key  in studying stability of solutions of differential inclusions.

\begin{deff}\label{non-def-1}
{\rm We introduce the following definitions.
\begin{itemize}
\item [\rm{(i)}] (\cite{BC99} or \cite[p. 39]{C83})
A function $V(\cdot):\R^d\to\R$ is said to be regular at $\vx\in \R^d$ if for all $\vec v\in\R^d$, there exists the usual right directional derivative $V'_+(\vx,\vec v)$ and $V'_+(\vx,\vec v)=V^\circ(\vx,\vec v)$;
where
$$
V'_+(\vx,\vec v):=\lim_{t\downarrow 0}\frac{V(\vx+t\vec v)-V(\vx)}{t},
$$
and $V^\circ(\vx,\vec v)$ is the generalized directional derivative defined as
$$
V^\circ(\vx,\vec v):=\limsup_{\vy\to \vx,\; t\downarrow0}\frac{V(\vy+t\vec v)-V(\vy)}{t}.
$$
$V$ is said to be regular if it is regular at every $\vx\in\R^d$. Note that a convex function is not only Lipschitz continuous (in suitable domain), but also regular.

\item [\rm{(ii)}] (see \cite{C83}) The Clarke gradient $\partial V$ of $V$ is defined as
$
\partial V(\vx):=\co\{\lim \nabla V(\vx_i)| \vx_i\to \vx, \;\vx\notin\Omega_V\},
$
where $\Omega_V$ is the set of measure zero with $\nabla V$
being
not defined.
\item [\rm{(iii)}]
$($see \cite{BC99}$)$
The set-valued derivative of a regular function $V$ with respect to $F$ is defined as
$$
\dot{\bar V}^F(\vx)=\{a\in\R| \text{ there is }\vec q\in F(\vx) \text{ such that }\vec p^\top \vec q=a,\;\forall \vec p\in\partial V(\vx)\}.
$$
\item [\rm{(iv)}]
A function $V:\R^d\to\R$ is said to be positive definite if it is continuous, $V(\vec 0)=0$ and there are  continuous increasing functions $\alpha_1$ and $\alpha_2:\R_+\to \R$ with $\alpha_1(0)=\alpha_2(0)=0$ such that
$
\alpha_1(|\vx|)\leq V(\vx)\leq \alpha_2(|\vx|),\;\forall \vx\in\R^d.
$

\end{itemize}}
\end{deff}

The following lemma provides a view of the relationship between the above definitions and the dynamics of solutions of differential inclusions.

\begin{lem} $($see \cite[Lemma $1$]{BC99}$)$
Let $\vX(\cdot)$ be a solution of $\dot \vX(t)\in F(\vX(t))$, and $V:\R^d\to\R$ be a locally Lipschitz continuous and regular function.
Then, $\frac d{dt}V(\vX(t))$ exists almost everywhere and $\frac d{dt}V(\vX(t))\in \dot{\bar V}^F(\vX(t))$ almost everywhere.
\end{lem}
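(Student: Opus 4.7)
The plan is to establish differentiability of the composition $t\mapsto V(\vX(t))$ at almost every $t$, and then identify the derivative as an element of $\dot{\bar V}^F(\vX(t))$ by showing it coincides with $\vec p^\top \dot\vX(t)$ for every $\vec p\in\partial V(\vX(t))$.

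First, I would note that since $\vX(\cdot)$ is a solution of the differential inclusion it is absolutely continuous, and since $V$ is locally Lipschitz, the composition $t\mapsto V(\vX(t))$ is absolutely continuous on each compact interval. Hence $\tfrac{d}{dt}V(\vX(t))$ exists a.e. Moreover, $\dot\vX(t)$ exists and lies in $F(\vX(t))$ for a.e.\ $t$. Let $t_0$ be a point where both of these hold; it suffices to work at such a $t_0$ and show $\tfrac{d}{dt}V(\vX(t))\big|_{t_0}=\vec p^\top\dot\vX(t_0)$ for every $\vec p\in\partial V(\vX(t_0))$. The witness $\vec q=\dot\vX(t_0)\in F(\vX(t_0))$ will then certify membership in $\dot{\bar V}^F(\vX(t_0))$.

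Next, using the Lipschitz property of $V$ near $\vX(t_0)$ together with $\vX(t_0+s)=\vX(t_0)+s\dot\vX(t_0)+o(s)$, one deduces
\begin{equation*}
V(\vX(t_0+s))-V(\vX(t_0)) = V\bigl(\vX(t_0)+s\dot\vX(t_0)\bigr)-V(\vX(t_0))+o(s),
\end{equation*}
so that $\tfrac{d}{dt}V(\vX(t))\big|_{t_0}=V'_+\bigl(\vX(t_0),\dot\vX(t_0)\bigr)$. By regularity of $V$, this equals the Clarke generalized directional derivative $V^\circ(\vX(t_0),\dot\vX(t_0))$, which by the standard support-function representation of $\partial V$ equals $\max\{\vec p^\top\dot\vX(t_0):\vec p\in\partial V(\vX(t_0))\}$. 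Applying the same argument with the direction $-\dot\vX(t_0)$ and using that $\tfrac{d}{dt}V(\vX(t))\big|_{t_0}$ is two-sided, I get $-\tfrac{d}{dt}V(\vX(t))\big|_{t_0}=V^\circ(\vX(t_0),-\dot\vX(t_0))=-\min\{\vec p^\top\dot\vX(t_0):\vec p\in\partial V(\vX(t_0))\}$. Combining the two identities forces $\vec p^\top\dot\vX(t_0)$ to be constant on $\partial V(\vX(t_0))$ and equal to $\tfrac{d}{dt}V(\vX(t))\big|_{t_0}$, which is precisely the defining condition of $\dot{\bar V}^F(\vX(t_0))$.

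The main obstacle is the delicate step that transfers a two-sided time derivative along the trajectory to a one-sided directional derivative of $V$ along the vector $\dot\vX(t_0)$, and then invokes regularity to upgrade the inequality $V'_+=V^\circ$ on both $\pm\dot\vX(t_0)$. This transfer requires the little-$o$ expansion of $\vX$ and the Lipschitz bound on $V$ to absorb the error $o(s)$; without regularity one could only conclude $\tfrac{d}{dt}V(\vX(t))\in[\min,\max]\{\vec p^\top\dot\vX(t):\vec p\in\partial V(\vX(t))\}$, which is strictly weaker than membership in $\dot{\bar V}^F$ as defined in Definition \ref{non-def-1}(iii). Everything else is a routine consequence of absolute continuity and the support-function description of the Clarke gradient.
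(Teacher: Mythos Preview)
The paper does not give its own proof of this lemma; it merely cites \cite[Lemma~1]{BC99}. Your argument is correct and is in fact essentially the proof given in that reference: absolute continuity of $t\mapsto V(\vX(t))$, then at a.e.\ $t_0$ the chain-rule computation $\tfrac{d}{dt}V(\vX(t))\big|_{t_0}=V'_+(\vX(t_0),\dot\vX(t_0))=V^\circ(\vX(t_0),\dot\vX(t_0))=\max_{\vec p\in\partial V}\vec p^\top\dot\vX(t_0)$, followed by the same identity in the direction $-\dot\vX(t_0)$ to force $\vec p\mapsto\vec p^\top\dot\vX(t_0)$ to be constant on $\partial V(\vX(t_0))$, whence $\dot\vX(t_0)\in F(\vX(t_0))$ witnesses membership in $\dot{\bar V}^F(\vX(t_0))$.
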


Finally, we recall the following definitions introduced in \cite{KDR17}, which are used in this paper.

\begin{deff} 
	{\rm \begin{itemize}
		\item [\rm{(i)}] Let $\mathcal U:=\{U_i\}_{i=1}^\infty$ be a collection of real-valued Lipschitz regular functions.
		We define
		$
		\tilde F_{\mathcal U}:=\cap_{i=1}^\infty M_{U_i}^F(\vx),
		$
		 where
		$
		M^F_{U_i}:=\{\vec q\in F(\vx) | \text{ there exists }a\in \R\text{ such that }\vec p^\top \vec q=a,\;\forall \vec p\in\partial U_i(\vx)\}.
		$
		If $\mathcal U$ is empty, we define $\tilde F_{\mathcal U}=F(\vx)$.
		$\tilde F_{\mathcal U}$ is called the $\mathcal U$-reduced differential inclusion.
\item [\rm{(ii)}] \label{def-56-11}
The $\mathcal U$-generalized derivative of locally Lipschitz function $V:\R^d\to\R$ with direction $F$, denoted by $\dot{\bar V}_{\mathcal U}$ is defined as
\[
\dot{\bar V}^F_{\mathcal U}(\vx):=
\begin{cases}
\min_{\vec p\in\partial V(\vx)}\max_{\vec q\in \tilde F_{\mathcal U}(\vx)} \vec p^\top \vec q\text{ if $V$ is regular},\\
\max_{\vec p\in\partial V(\vx)}\max_{\vec q\in \tilde F_{\mathcal U}(\vx)} \vec p^\top \vec q\text{ if $V$ is not regular}.
\end{cases}
\]
The $\mathcal U$-generalized derivative is understood to be $-\infty$ if $\tilde F_{\mathcal U}$ is empty.
Such a Lyapunov function $V$ with $\dot{\bar V}^F_{\mathcal U}(\vx)\leq 0,\;\forall \vx$ is called as $\mathcal U$-generalized Lyapunov function.
	\end{itemize}}
	
\end{deff}

\begin{example}\label{1-ex-1}{\rm
To illustrate,
let $F(x)=\mathcal K[f](x):\R\to2^{\R}$, $f(x)=-\text{sign}(x)$, i.e.,
$
F(x)=\begin{cases}
-1\text{ if }x>0,\\
[-1,1]\text{ if }x=0,\\
1\text{ if }x<0,
\end{cases}
$
$U:\R\to\R$, $U(x)=\max\{x,0\}$, $\mathcal U=\{U\}$ and $V(x)=x^2$.
Since $U$ is convex, it is regular. The Clarke gradient of $U$ is given by
$
\partial U(x)=\begin{cases}
1\text{ if }x>0,\\
[0,1]\text{ if }x=0,\\
0\text{ if }x<0.
\end{cases}
$
The reduced inclusion $M_{\mathcal U}^F$ is given by
$
M_{\mathcal U}^F(x)=\begin{cases}
-1\text{ if }x>0,\\
0\text{ if }x=0,\\
1\text{ if }x<0.
\end{cases}
$
Moreover, $V$ is continuously differentiable, $\partial V(x)=2x$.
Hence, the $\mathcal U$-generalized derivative of $V$ in direction $F$ is given by
$$
\begin{aligned}
\dot{\bar V}_{\mathcal U}^F(x)=& \max_{q\in \tilde F_{\{U\}}(x)} \partial V(x)q=\max_{q\in\tilde F_{\{U\}}(x)}2xq
=
\begin{cases}
-2x\text{ if }x>0,\\
0\text{ if }x=0,\\
2x\text{ if }x<0.
\end{cases}
\end{aligned}
$$ }
\end{example}

\subsection{Stability of Differential Inclusions}\label{sec:63-asy}
In this section, we consider the asymptotic stability of solutions of the ODEs with discontinuous right-hand sides and differential inclusions, which contains two parts. The first is stability of Krasovskii solutions and the second is for general differential inclusions.

Let $F:\R^d\to 2^{\R^d}$ be a set-valued mapping such that $F$ is upper semicontinuous  whose values are non-empty, compact, and convex. Consider the differential inclusion
\begin{equation}\label{eq-F}
\dot \vX(t)\in F(\vX(t)).
\end{equation}

\begin{deff}{\rm $($see \cite[Definition 2.1]{C88}$)$
	The differential inclusion \eqref{eq-F} is strongly asymptotically stable $($in Clarke's sense$)$ if there is no solution exhibiting finite time blow-up and the following properties hold.
	\begin{itemize}
		\item[]{\rm (a)} Uniform attraction: for any $r>0$, $R>0$, there is $T=T(R,r)$ such that for any solution $\vX(\cdot)$ of \eqref{eq-F} with $|\vX(0)|<R$ then
		$
		|\vX(t)|\leq r\text{ for all }t\geq T.
		$
		\item[]{\rm (b)} Uniform boundedness: there is a continuous non-increasing function $m:(0,\infty)\to(0,\infty)$ such that for any solution $\vX(\cdot)$ of \eqref{eq-F} with $|\vX(0)|\leq R$ then
		$
		|\vX(t)|\leq m(R)\text{ for all }t\geq 0.
		$
		\item[]{\rm (c)} Lyapunov stability:
		$
		\lim_{R\downarrow 0}m(R)=0.
		$
	\end{itemize}
}
\end{deff}

\begin{deff} {\rm $($Classical Lyapunov stability$)$ $($see \cite[Definition 7.1]{KDR17}$)$
	The differential inclusion $\dot \vX(t)\in F(\vX(t))$ is said to be $($strongly$)$ asymptotically stable at $\vx=\vec 0$ if every solution is stable at $\vx=\vec 0$,
	$[$that is, for any $\eps>0$, there is $\delta>0$ such that if $|\vX(0)|\leq \delta$ then $|\vX(t)|<\eps,\forall t\geq 0$$]$ and there is $c>0$ such that if $|\vX(0)|\leq c$ then $\lim_{t\to\infty} |\vX(t)|=0$.
	Moreover, it is said to be globally asymptotically stable if the constant $c$ can be $\infty$.}
\end{deff}

\begin{prop}$($see \cite{C88}$)$
	The strongly asymptotic stability $($in Clarke's sense$)$ implies the classical asymptotic stability in the Lyapunov sense.
\end{prop}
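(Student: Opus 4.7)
The plan is to directly unwrap the two clauses in the definition of classical Lyapunov asymptotic stability from the three clauses (a)--(c) in the Clarke definition, since the Clarke notion is designed as a quantitative strengthening of the classical one. I will handle the stability clause and the attraction clause in turn, and both extractions should be essentially bookkeeping.

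First, for Lyapunov stability at $\vx = \vec 0$: given an arbitrary $\eps > 0$, I will appeal to clause (c), namely $\lim_{R\downarrow 0} m(R) = 0$, to choose a threshold $\delta > 0$ small enough that $m(\delta) < \eps$. Then for any solution $\vX(\cdot)$ with $|\vX(0)| \leq \delta$, clause (b) applied with $R = \delta$ yields $|\vX(t)| \leq m(\delta) < \eps$ for every $t \geq 0$. This is exactly the first ingredient of classical asymptotic stability.

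Second, for the attracting property: I fix any $c > 0$ (in the Clarke formulation the attraction is uniform in $R$, so in fact any $c$ works, giving global attraction). For a solution $\vX(\cdot)$ with $|\vX(0)| \leq c$ and for each prescribed $r > 0$, clause (a) with this choice of $R = c$ supplies $T = T(c,r)$ such that $|\vX(t)| \leq r$ for all $t \geq T$. Taking $r$ along a sequence tending to $0$ then gives $\lim_{t\to\infty} |\vX(t)| = 0$, which is the second ingredient of classical asymptotic stability. Combined with the finite-time-blow-up exclusion in the Clarke definition, the solutions are defined for all $t \geq 0$, so the conclusion is meaningful.

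I do not anticipate a genuine technical obstacle: once the definitions are read carefully, the implication reduces to matching quantifiers and invoking each of (a), (b), (c) once. The only point needing a line of care is not to confuse the role of $R$ in (b) with the norm $|\vX(0)|$ itself; I avoid this by evaluating $m$ at the a priori chosen scales $\delta$ and $c$ supplied by (c) and by the attraction requirement, rather than at a variable initial-data norm, so that the inequalities line up regardless of the monotonicity convention adopted for $m$.
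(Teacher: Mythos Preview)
Your argument is correct. The implication really is a matter of unpacking the quantifiers: clause (c) combined with (b) yields Lyapunov stability, and clause (a) yields convergence to $\vec 0$ for any initial radius. The only cosmetic slip is that clause (a) is stated with the strict inequality $|\vX(0)|<R$, so when you take $R=c$ you should instead take $R$ slightly larger than $c$ (e.g.\ $R=c+1$) to cover $|\vX(0)|\leq c$; this is harmless.

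As for comparison with the paper: the paper does not supply a proof of this proposition at all. It is stated with the attribution ``(see \cite{C88})'' and left as a citation. Your direct verification is therefore more than what the paper offers, and is the natural elementary argument.
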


The following theorem (\cite[Theorem 1.3]{C88}) provides  necessary and sufficient conditions for strongly asymptotic stability of the Karasovskii
solutions of the ODEs with discontinuous right-hand sides (see Section \ref{sec:ode} for definition)
\begin{equation}\label{eq-f11}
\dot \vX(t)=f(\vX(t)).
\end{equation}

\begin{thm}\label{thm-sta}
	Let $f$ be a locally bounded function.
		Then, Krasovskii solutions
	of \eqref{eq-f11} are strongly asymptotically stable if and only if there exists a $\mathcal{C}^\infty$-smooth pair of functions $(V,\hat V_0)$ satisfying
	\begin{itemize}
		\item[] {\rm(1)} $V(\vx)>0$ and $\hat V_0(\vx)>0$ for all $\vx\neq\vec 0$, $V(\vec 0)=0;$
		\item []{\rm(2)} the sublevel sets $\{\vx\in\R^d: V(\vx)\leq l\}$ are bounded for every $l\geq 0;$
		\item[] {\rm (3)} $\limsup_{\vy\to \vx}\langle \nabla V(\vx), f(\vy)\rangle\leq -\hat V_0(\vx),\;\forall \vx\neq\vec 0.$
	\end{itemize}
\end{thm}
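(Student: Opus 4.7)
The plan is to treat the biconditional as two separate implications: a sufficiency direction (Lyapunov pair $\Rightarrow$ stability), which is a direct argument exploiting the structure of the Krasovskii operator, and a necessity direction (stability $\Rightarrow$ smooth Lyapunov pair), which is a converse Lyapunov theorem and the harder half.

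For sufficiency, I would first promote condition (3) from the vector field $f$ to its Krasovskii hull: for every $\vx\neq \vec 0$ and every $\vec v\in \mathcal K[f](\vx)$,
\begin{equation*}
\langle \nabla V(\vx),\vec v\rangle\leq -\hat V_0(\vx).
\end{equation*}
This follows from the definition $\mathcal K[f](\vx)=\cap_{\delta>0}\overline{\mathrm{co}}\,f(B(\vx,\delta))$: any $\vec v$ in this intersection is a limit of convex combinations of values $f(\vy)$ with $\vy$ arbitrarily close to $\vx$, and the linear functional $\vec w\mapsto \langle \nabla V(\vx),\vec w\rangle$ is continuous and respects convex combinations, so passing to the limit in the $\limsup$ hypothesis yields the bound. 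Then for any Krasovskii solution $\vX(\cdot)$, absolute continuity of $\vX(\cdot)$ and smoothness of $V$ give
\begin{equation*}
\tfrac{d}{dt}V(\vX(t))=\langle \nabla V(\vX(t)),\dot \vX(t)\rangle\leq -\hat V_0(\vX(t))\quad\text{a.e.},
\end{equation*}
and the three Clarke properties (uniform boundedness from (2), uniform attraction from positivity of $\hat V_0$, and Lyapunov stability from continuity of $V$ at $\vec 0$) follow by standard comparison arguments; finite-time blow-up is ruled out since $V(\vX(t))$ is non-increasing, confining the trajectory to a fixed sublevel set.

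For necessity, I would first build a non-smooth candidate and then smooth it. As a candidate, set
\begin{equation*}
V_0(\vx):=\sup\Big\{\int_0^{\infty}\alpha(|\vX(t)|)\,dt\,:\,\vX(\cdot)\text{ a Krasovskii solution, }\vX(0)=\vx\Big\},
\end{equation*}
with a continuous positive-definite weight $\alpha$ decaying fast enough to keep the integral finite, calibrated against the uniform attraction estimate $T(R,r)$. Strong asymptotic stability then makes $V_0$ finite, positive definite, with bounded sublevel sets; upper semicontinuity of $\mathcal K[f]$ yields regularity of $V_0$; and the dynamic programming principle supplies the non-smooth decay
\begin{equation*}
V_0(\vX(t))\leq V_0(\vX(0))-\int_0^t \alpha(|\vX(s)|)\,ds
\end{equation*}
along every Krasovskii solution. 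To upgrade to a $\mathcal C^\infty$ pair I would mollify, $V^\varepsilon:=V_0*\rho_\varepsilon$ for a smooth compactly supported kernel $\rho_\varepsilon$, and then assemble
\begin{equation*}
V(\vx):=\sum_{k\geq 1} c_k\,V^{\varepsilon_k}(\vx),
\end{equation*}
for sequences $c_k\downarrow 0$, $\varepsilon_k\downarrow 0$ chosen so that the series and all its derivatives converge uniformly on compacts, yielding a $\mathcal C^\infty$ proper $V$, from which $\hat V_0$ is read off via the induced strict decay.

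The hard part will be the smoothing step. Mollification replaces $V_0(\vx)$ by an average over $\vx+\varepsilon B$, so the decay inequality translates into a statement about dynamics at shifted base points; this is delicate because $f$ is only locally bounded and possibly discontinuous, so neighboring dynamics need not resemble those at $\vx$. I would overcome this by exploiting robustness of strong asymptotic stability under small perturbations of $f$ (the Krasovskii hull absorbs vanishing perturbations, and strong asymptotic stability is preserved), together with an adaptive $\varepsilon=\varepsilon(\vx)$ that shrinks as $\vx\to\vec 0$ and as $|\vx|\to\infty$ to keep the decay uniform; this is essentially the Clarke--Ledyaev--Stern--Wolenski smoothing device. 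A final adjustment subtracting a suitable lower-order smooth term then produces a strictly positive $\hat V_0$ satisfying (3) at every $\vx\neq \vec 0$.
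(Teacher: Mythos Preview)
The paper does not prove this theorem at all: it is stated in the Appendix as a quoted result, introduced explicitly as ``\cite[Theorem 1.3]{C88}'' (Clarke, Ledyaev, and Stern, \emph{J.\ Differential Equations} \textbf{149} (1998), 69--114), and no argument is given in the paper beyond the citation. So there is no in-paper proof to compare against.

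That said, your outline is essentially the strategy of the cited Clarke--Ledyaev--Stern paper. The sufficiency direction you sketch is correct and complete in spirit: lifting condition (3) from $f$ to $\mathcal K[f]$ via linearity and the limsup, then differentiating $V$ along solutions, is exactly how it goes. For necessity, your integral-type candidate $V_0$ and the subsequent mollification-plus-patching are indeed the Clarke--Ledyaev--Stern device; you have correctly identified the genuine difficulty, namely that convolution shifts the base point while $f$ has no pointwise regularity, and that this is handled by exploiting the robustness of strong asymptotic stability together with a spatially varying mollification scale. One point you could make more explicit: the regularity you can expect from $V_0$ directly is only local Lipschitz continuity (from upper semicontinuity of $\mathcal K[f]$ plus compactness of solution sets), not more, so the smoothing really is doing essential work rather than cosmetic cleanup. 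But as an outline of the known proof, your proposal is accurate.
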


\begin{rem}
	Note that in differential inclusions, the uniqueness of solution is not always guaranteed.
		Hence, the term ``strongly" in definitions of stability means that these definitions hold for all solutions. In contrast, ``weak" stability means that there is a solution that is stable. The condition of ``weak asymptotic stability" of Krasovskii solutions can be found in
	\cite{C88}.
\end{rem}

In contrast to Theorem \ref{thm-sta},  sufficient conditions for asymptotic stability of general differential inclusions can be found in \cite{BC99} and references therein.
Recently, these sufficient conditions for differential inclusion $\dot \vX(t)\in F(\vX(t))$ are much improved in \cite{KDR17}.
We state this result in the following theorem.

\begin{thm}\label{thm-sta2} $($\cite[Theorem 7.2]{KDR17}$)$
	If there exists a $\mathcal U$-generalized Lyapunov function $V:\R^d\to\R$ such that
	$
	\dot {\bar V}_{\mathcal U}^{F}(\vx) \leq -\hat V_0(\vx),
	$
	for some positive definite function $\hat V_0$ $($see Definitions {\rm\ref{non-def-1}} and {\rm\ref{def-56-11}$)$},
	then \eqref{eq-F} is $($strongly$)$ asymptotically stable $($in the sense of Lyapunov$)$
	at $\vx=\vec 0$. Furthermore, if
	$\{\vx\in\R^d: V(\vx)\leq l\}$ are compact for all $l>0$ then \eqref{eq-F} is $($strongly$)$ globally asymptotically stability $($in the sense of Lyapunov$)$ at $\vx=\vec 0$.
\end{thm}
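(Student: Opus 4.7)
The plan is to combine absolute continuity of solutions with a chain rule for locally Lipschitz regular functions to produce a Lyapunov-type differential inequality along every trajectory, and then invoke the classical Lyapunov argument.

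The first key step I would establish is: for any solution $\vX(\cdot)$ of $\dot\vX(t)\in F(\vX(t))$, one has $\dot\vX(t)\in \tilde F_{\mathcal U}(\vX(t))$ for a.e.\ $t$. Indeed, for each $U_i\in\mathcal U$ the composition $t\mapsto U_i(\vX(t))$ is locally Lipschitz and hence a.e.\ differentiable; regularity of $U_i$ together with Clarke's chain rule yields
\[
\frac{d}{dt}U_i(\vX(t))=\langle \vec p,\dot\vX(t)\rangle\quad\text{for every }\vec p\in\partial U_i(\vX(t)),
\]
for a.e.\ $t$. Because this value is the \emph{same} for every $\vec p$ in the Clarke gradient, the definition of $M^F_{U_i}$ places $\dot\vX(t)\in M^F_{U_i}(\vX(t))$ a.e.; intersecting over the countable collection $\mathcal U$ (discarding a countable union of null sets) gives the claim.

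Next I would invoke the chain rule for $V$ itself. In the regular case, the same identity gives $\frac{d}{dt}V(\vX(t))=\langle \vec p,\dot\vX(t)\rangle$ for every $\vec p\in\partial V(\vX(t))$, so the inclusion above produces
\[
\frac{d}{dt}V(\vX(t))\leq\min_{\vec p\in\partial V(\vX(t))}\max_{\vec q\in\tilde F_{\mathcal U}(\vX(t))}\vec p^\top\vec q=\dot{\bar V}^F_{\mathcal U}(\vX(t));
\]
in the non-regular case, a generalized-chain-rule estimate gives the same bound with $\min$ replaced by $\max$ over $\partial V$, matching the definition of $\dot{\bar V}^F_{\mathcal U}$. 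The hypothesis therefore delivers the key differential inequality $\frac{d}{dt}V(\vX(t))\leq -\hat V_0(\vX(t))\leq 0$ for almost every $t$, so $V\circ\vX$ is non-increasing.

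From this point the argument is classical. Monotonicity of $V\circ\vX$ combined with positive definiteness of $V$ (and hence a class-$\mathcal K$ comparison) gives Lyapunov stability of $\vec 0$ by the usual $\eps$-$\delta$ choice of sublevel sets. Integrating the differential inequality gives $\int_0^\infty\hat V_0(\vX(t))\,dt<\infty$; since $\vX(\cdot)$ is bounded (trapped in a sublevel set of $V$) and $\hat V_0$ is continuous and positive definite, a Barbalat-type argument then forces $\vX(t)\to\vec 0$, yielding local asymptotic stability. Under the additional compactness of all sublevel sets $\{V\leq l\}$, any initial condition lies in some invariant compact sublevel set, and the same argument extends verbatim to give global strong asymptotic stability. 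The main obstacle I expect is the first step, and more precisely the fact that $\langle \vec p,\dot\vX(t)\rangle$ must be independent of $\vec p\in\partial U_i(\vX(t))$ a.e.; this hinges crucially on regularity of the $U_i$, since without it the set $M^F_{U_i}$ could be empty at the trajectory's position and render the $\mathcal U$-reduction vacuous. Handling the non-regular $V$ (where the definition switches to $\max$ over $\partial V$) also requires a careful upper-bound argument, but is otherwise analogous.
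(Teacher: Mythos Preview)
The paper does not prove this theorem at all: it is quoted verbatim as \cite[Theorem 7.2]{KDR17} and used as an external input (in the proofs of Theorems~\ref{mth-2} and~\ref{mth-4}), with no argument supplied in the present paper. So there is no ``paper's own proof'' to compare against.

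That said, your proposal is essentially the correct argument and is, as far as one can tell from the surrounding context, the intended one in \cite{KDR17}. The crucial observation---that along any trajectory $\dot\vX(t)$ automatically lands in each $M^F_{U_i}(\vX(t))$ a.e.\ because regularity of $U_i$ forces $\langle\vec p,\dot\vX(t)\rangle$ to equal the (single) derivative $\tfrac{d}{dt}U_i(\vX(t))$ for every $\vec p\in\partial U_i(\vX(t))$---is exactly the content of the lemma from \cite{BC99} recalled just above Definition~\ref{def-56-11}, and your reduction to the classical Lyapunov/Barbalat argument is standard. One small technical point worth tightening: the theorem as stated does not explicitly require $V$ to be positive definite (only that it is a ``$\mathcal U$-generalized Lyapunov function'', i.e.\ locally Lipschitz with $\dot{\bar V}^F_{\mathcal U}\le 0$), so your $\eps$--$\delta$ stability step and the attraction conclusion implicitly rely on $V(\vec 0)=0$ and $V(\vx)>0$ for $\vx\neq\vec 0$; in \cite{KDR17} and in the way the result is used here (cf.\ Assumptions~\ref{G2} and~\ref{H2}) this is part of the standing hypotheses, but you should state it rather than leave it tacit.
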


\begin{rem}
	Another
	result on stability of differential inclusions using Lyapunov functional method can be found in \cite{BC99}.
	The technique is based on the ``set-valued derivative of a regular function V" with respect to $F$.
	However, using the $\mathcal U$-generalized derivative is shown to be much stronger and more effective; see \cite{KDR17}. Moreover, if $U(\cdot)$ satisfies $\partial U(\cdot)=(1,\dots,1)^\top$, then the $\{U\}$-generalized derivative is the set-valued derivative.
\end{rem}

\subsection{Set-valued Dynamical Systems: Invariant Set, Limit Set, and Chain Recurrence}
\label{sec:dyn}
Consider the differential inclusion
\begin{equation}\label{dyn-1}
\dot \vX(t)\in F(\vX(t)).
\end{equation}
We recall some concepts, which are used in this paper; more details can be found in \cite{B96,BHS05,TD17} and references therein.

\begin{deff}{\rm (see \cite[Section 3]{BHS05})
 Let $\vX(\cdot)$ be
a solution of \eqref{dyn-1}.
The limit set of $\vX(\cdot)$, denoted by $L(\vX)$, is defined as
$
L(\vX)=\cap_{t\geq 0}\bar{\{\vX(s):s\geq t\}}.
$}
\end{deff}

\begin{deff} {\rm (see \cite[Definition V]{BHS05}) A set $A\subset \R^d$ is said to be invariant if for all $\vx\in A$, there exists a solution $\vX(\cdot)$ of \eqref{dyn-1} with $\vX(0)=\vx$
such that $\vX(\R)\subset A$.
}
\end{deff}

\begin{deff} {\rm (see \cite[Definition VI]{BHS05}) Let $A$ be a subset of $\R^d$.
\begin{itemize}
\item $\vx,\vy\in A$ is said to be chain connected in $A$ if for every $\eps>0$ and $T>0$, there exist an integer $n\in\N$, and solutions $\vX_1(\cdot),\dots,\vX_n(\cdot)$ to \eqref{dyn-1}, and real numbers $t_1,\dots,t_n>T$ such that
\begin{itemize}
\item[]{(a)} $\vX_i(s)\in A$ for all $0\leq s\leq t_i$, $i=1,\dots,n$;
\item[]{(b)} $|\vX_i(t_i)-\vX_{i+1}(0)|\leq \eps$ for all $i=1,\dots,n-1$;
\item[]{(c)} $|\vX_1(0)-\vx|\leq \eps$ and $|\vX_n(t_n)-\vy|\leq \eps$.
\end{itemize}
\item $A$ is said to be ``internally chain transitive" of \eqref{dyn-1} if $A$ is compact and $\vx,\vy$ are chain-connected in $A$ for all $\vx,\vy\in A$.
\end{itemize} }
\end{deff}

\begin{deff} {\rm (see \cite{BHS05,KY03,TD17})
$\boldsymbol{\theta}$ is said to be a ``chain-recurrent point" of \eqref{dyn-1}
if
for any $\eps>0$ and $T>0$, there exist an integer $n\in\N$, and solutions $\vX_1(\cdot),\dots,\vX_n(\cdot)$ to \eqref{dyn-1} and real numbers $t_1,\dots,t_n>T$ such that
$$
|\vX_1(0)-\boldsymbol{\theta}|\leq \eps,\quad |\vX_i(t_i)-\vX_{i+1}(0)|\leq \eps\;\forall i=1,\dots,n-1,\quad |\vX_{n}(t_n)-\boldsymbol{\theta}|\leq\eps.
$$
Moreover, we say that $\boldsymbol{\theta}$ is a
 ``chain-recurrent point" in $A$ of \eqref{dyn-1}, if we assume further that
$\vX_i(s)\in A$ for all $0\leq s\leq t_i$, $i=1,\dots,n$.
}\end{deff}

The following lemma (see \cite[Lemma 3.5]{BHS05}) shows the relationship between invariant set and internally chain transitive set.

\begin{lem}
An internally chain transitive set is invariant.
\end{lem}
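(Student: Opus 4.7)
The plan is to fix an arbitrary $\vx\in A$ and construct a full-line solution $\vX:\R\to\R^d$ of $\dot \vX(t)\in F(\vX(t))$ with $\vX(0)=\vx$ and $\vX(\R)\subset A$, by a compactness/limit argument applied to a sequence of pseudo-orbits furnished by internal chain transitivity. Note that $F$ is upper semicontinuous with non-empty, compact, convex values contained in a common ball $B_F$ (this is the standing setting in Section \ref{sec:63-asy} under which the lemma is stated), so any solution of the differential inclusion is Lipschitz with constant bounded by the radius of $B_F$.

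The first step is to use internal chain transitivity of $\vx$ with itself, with parameters $\eps_n=1/n$ and $T_n=n$, \emph{twice} in succession, to produce a finite list of solutions $\vX_1^n,\dots,\vX_{k_n}^n$ and times $t_1^n,\dots,t_{k_n}^n>n$ with $\vX_i^n([0,t_i^n])\subset A$, $|\vX_i^n(t_i^n)-\vX_{i+1}^n(0)|\leq 1/n$, $|\vX_1^n(0)-\vx|\leq 1/n$, $|\vX_{k_n}^n(t_{k_n}^n)-\vx|\leq 1/n$, and such that at the junction between the two loops there is an index $j_n$ with $|\vX_{j_n}^n(t_{j_n}^n)-\vx|\leq 1/n$, the accumulated time $\tau_n=\sum_{i\leq j_n}t_i^n$ being at least $n$ and the remaining time $\sigma_n=\sum_{i>j_n}t_i^n$ also being at least $n$. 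I then glue the pieces end-to-end to form $\tilde\vX^n:[0,\tau_n+\sigma_n]\to\R^d$ (say left-continuous at the jumps), and shift the time variable by $-\tau_n$ to obtain an approximate trajectory $\vY^n:[-\tau_n,\sigma_n]\to\R^d$ with $\vY^n(0)$ within $1/n$ of $\vx$ and $\vY^n(t)\in A$ except at the finitely many jump times, where the jump size is at most $1/n$.

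The second step is to pass to the limit. Since each smooth piece of $\vY^n$ is a solution of the differential inclusion with $\dot{\vY}^n$ bounded by the radius of $B_F$, and since the number of jumps is finite with total jump size going to $0$, smoothing the jumps by linear interpolation over auxiliary intervals of length, say, $1/n^2$, yields a sequence $\vZ^n$ that is equi-Lipschitz on $[-\tau_n,\sigma_n]$, coincides with $\vY^n$ outside a set of measure $\to 0$, and satisfies $\vZ^n(t)\in A+\delta_n\bar B$ with $\delta_n\to 0$. By a diagonal Arzelà–Ascoli argument, extract a subsequence converging uniformly on every compact subinterval of $\R$ to a Lipschitz continuous $\vX:\R\to\R^d$. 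Because $A$ is compact and $\delta_n\to 0$, the limit satisfies $\vX(\R)\subset A$; and because $\vY^n(0)\to\vx$, we have $\vX(0)=\vx$. To see that $\vX$ is a solution of $\dot\vX\in F(\vX)$, on each compact interval $[-M,M]$ the $\vZ^n$ satisfy an approximate inclusion $\dot\vZ^n(t)\in \text{co}\,F(\vZ^n(t)+\eps_n B)+\eps_n\bar B$ for a sequence $\eps_n\to 0$, and one concludes by the closure property of solution sets already established in Proposition \ref{prop-3.2-F} combined with Proposition \ref{prop63} (or alternatively by the classical convergence theorem for differential inclusions with upper semicontinuous, compact, convex right-hand side in \cite[Chapter 2]{AC84}).

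The principal obstacle, and the reason internal chain transitivity between $\vx$ and \emph{itself} was invoked twice rather than once, is obtaining a bilateral orbit: a one-sided pseudo-loop centered at $\vx$ places $\vx$ only at the endpoints, preventing a time-shift that keeps $\vx$ at time $0$ with arbitrarily long excursions on both sides. Performing the chain construction twice and cutting at the middle junction (close to $\vx$) circumvents this and produces approximate two-sided orbits through $\vx$ of increasing extent, which is what the Arzelà–Ascoli/closure machinery requires. The other technical point—handling the finitely many small jumps—is routine and is resolved by the linear-interpolation smoothing described above, reducing the problem to the standard closure theorem for differential inclusions.
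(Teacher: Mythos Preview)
The paper does not actually prove this lemma; it merely quotes it from \cite[Lemma 3.5]{BHS05} without argument, so there is no in-paper proof to compare against. Your proposal supplies what the paper omits, and the overall strategy---manufacture two-sided $\eps$-pseudo-orbits through $\vx$ of growing length using chain transitivity, then extract a limit via Arzel\`a--Ascoli and the closure of the solution set---is the standard one and is essentially how the result is proved in \cite{BHS05}.

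Two small remarks on execution. First, the standing hypothesis in Section~\ref{sec:63-asy} does not literally say that $F$ is globally bounded; what you actually need (and have) is that $F$ is bounded on the compact set $A$, which follows from upper semicontinuity with compact values. Second, your ``approximate inclusion'' for the linearly interpolated process $\vZ^n$ is stated a bit loosely: on the interpolation intervals the derivative can have magnitude of order $n$, so the pointwise inclusion $\dot\vZ^n(t)\in \text{co}\,F(\vZ^n(t)+\eps_nB)+\eps_n\bar B$ fails there. The clean way around this is to observe that on any fixed window $[-M,M]$, once $n>M$ each piece $\vX_i^n$ has duration $t_i^n>n>M$, so at most one jump (the one at time $0$) lands in $[-M,M]$. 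You can then apply Arzel\`a--Ascoli and the closure theorem separately on $[-M,0]$ and $[0,M]$, obtaining genuine solutions on each half; continuity of the limit at $0$ (both one-sided values are within $1/n$ of $\vx$) glues them into a solution on $[-M,M]$. This avoids the smoothing step entirely and makes the appeal to Proposition~\ref{prop-3.2-F} or \cite[Chapter~2]{AC84} straightforward.
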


\subsection{Set-valued Analysis: Continuity and $T$-differentiability}\label{sec:set}
This section reviews definitions and results of set-valued analysis in \cite{AC84,Kis13,Pan11,RW97} and references therein.
Recall that
 $B=\{\vx\in\R^d:|\vx|<1\}$ and $\bar {B}$ is its closure.

\begin{deff} {\rm (see \cite[Chapter 1, Section 1]{AC84} or \cite{RW97})
\begin{itemize}
\item A set-valued mapping $F$ is said to be lower semicontinuous at $\bar \vx$ if for every open set $U$ with $F(\bar \vx)\cap U\neq \emptyset$, there is an open set $V$ such that $\bar \vx\in V$ and $F(\vx)\cap U\neq \emptyset$ for every $\vx\in V$.
\item $F$ is said to be continuous if it is both lower semicontinuous and upper semicontinuous (see Definition \ref{def-usc}).
\end{itemize} }
\end{deff}

\begin{lem}\label{5.4-lem54} $($Criteria on continuity, see \cite[Chapter 2.2]{Kis13}$)$
If a set-valued mapping $F:\R^d\to2^{\R^d}$ has convex and compact values, then $F$ is continuous if and only if for each $\vec p\in\R^d$,
$\sigma(\vec p,F(\vx))$ is continuous $($in $\vx)$, where
$
\sigma(\vec p,A):=\sup\{\vec p^\top \vec a: \vec a\in A\}.
$
\end{lem}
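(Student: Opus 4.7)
My plan is to establish the equivalence by exploiting the duality between convex compact sets and their support functions, treating the two directions separately. For the forward direction ($F$ continuous $\Rightarrow$ $\sigma(\vec p,F(\cdot))$ continuous for each $\vec p$), I would fix $\vec p$ and $\bar\vx$ and prove the two one-sided bounds independently. For the upper bound, given $\eps>0$, the open half-space $\{\vec a\in\R^d : \vec p^\top\vec a<\sigma(\vec p,F(\bar\vx))+\eps\}$ contains the compact set $F(\bar\vx)$, so upper semicontinuity forces $F(\vx)$ into this half-space for $\vx$ near $\bar\vx$, yielding $\sigma(\vec p,F(\vx))\le\sigma(\vec p,F(\bar\vx))+\eps$. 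Symmetrically, since $F(\bar\vx)$ is compact, there exists an actual maximizer $\vec a_0\in F(\bar\vx)$ of the linear functional $\vec a\mapsto \vec p^\top \vec a$; applying lower semicontinuity to the open half-space $\{\vec a : \vec p^\top\vec a>\sigma(\vec p,F(\bar\vx))-\eps\}$ (which meets $F(\bar\vx)$ at $\vec a_0$) produces an element of $F(\vx)$ with projection on $\vec p$ exceeding $\sigma(\vec p,F(\bar\vx))-\eps$, giving the matching lower bound.

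For the reverse direction, my strategy is to invoke H\"ormander's identity
\[
d_H(A,B)=\sup_{|\vec p|=1}|\sigma(\vec p,A)-\sigma(\vec p,B)|
\]
valid for convex compact sets $A,B$ in $\R^d$ (with $d_H$ the Hausdorff distance), to reduce the problem to showing Hausdorff continuity of $\vx\mapsto F(\vx)$. Once Hausdorff continuity is obtained at $\bar\vx$, both upper and lower semicontinuity follow by short arguments: any open $U\supset F(\bar\vx)$ contains some $\eps$-fattening $F(\bar\vx)+\eps B$ by compactness of $F(\bar\vx)$, and any open $U$ meeting $F(\bar\vx)$ at some point $\vec a_0$ contains a ball around $\vec a_0$ which must still intersect any set that is Hausdorff-close to $F(\bar\vx)$.

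The main obstacle will be upgrading the hypothesis (pointwise-in-$\vec p$ continuity of $\vx\mapsto\sigma(\vec p,F(\vx))$) to the uniform-in-$\vec p$ continuity required by H\"ormander's formula. I would handle this by a two-step equicontinuity argument. First, by testing with $\vec p=\pm e_i$ for the standard basis vectors, the hypothesis forces the coordinate-wise extremes of $F(\vx)$ to be continuous, hence locally bounded, so $F$ is locally bounded. Second, on a bounded neighborhood $V$ of $\bar\vx$, each support function $\vec p\mapsto\sigma(\vec p,F(\vx))$ is Lipschitz with constant $\sup_{\vec a\in F(\vx)}|\vec a|$, a bound that is uniform in $\vx\in V$; combined with pointwise-in-$\vec p$ continuity on the compact unit sphere $S^{d-1}$, a finite-net/Arzel\`a--Ascoli-type argument (cover $S^{d-1}$ by finitely many balls of radius proportional to $\eps$, control the values at the centers by pointwise continuity, and fill in by equi-Lipschitzness) then upgrades this to $\sup_{|\vec p|=1}|\sigma(\vec p,F(\vx))-\sigma(\vec p,F(\bar\vx))|\to0$ as $\vx\to\bar\vx$, completing the Hausdorff continuity step and hence the proof.
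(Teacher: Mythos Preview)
The paper does not actually prove this lemma; it merely records it as a known criterion and refers the reader to \cite[Chapter 2.2]{Kis13}. So there is no ``paper's own proof'' against which to compare your proposal.

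That said, your proposal is a correct and self-contained argument. The forward direction is clean: you correctly extract the upper estimate on $\sigma(\vec p,F(\cdot))$ from upper semicontinuity (using the open half-space $\{\vec a:\vec p^\top\vec a<\sigma(\vec p,F(\bar\vx))+\eps\}$ as the test open set) and the lower estimate from lower semicontinuity (using the existence of a maximizer $\vec a_0\in F(\bar\vx)$ by compactness). For the reverse direction, your route through H\"ormander's identity $d_H(A,B)=\sup_{|\vec p|=1}|\sigma(\vec p,A)-\sigma(\vec p,B)|$ is the natural one, and you have correctly identified the one genuine issue---namely that the hypothesis only gives \emph{pointwise}-in-$\vec p$ continuity while the formula requires \emph{uniform}-in-$\vec p$ continuity over $S^{d-1}$. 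Your two-step fix (first deduce local boundedness of $F$ by testing against $\pm e_i$, then exploit the resulting equi-Lipschitzness of $\vec p\mapsto\sigma(\vec p,F(\vx))$ together with a finite net on the compact sphere) is exactly what is needed and works as stated. The final passage from Hausdorff continuity to upper and lower semicontinuity is routine given compactness of the values.
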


\begin{deff} {\rm (see \cite[Definition 2.1]{Pan11})
		A set-valued mapping $T:\R^d\to2^{\R^d}$ is positively homogeneous if
		$
		T(\vec 0)$ is a cone, and $T(k\vx)=kT(\vx)$ for all $k>0$, $\vx\in\R^d.$
	}
\end{deff}

\begin{deff} {\rm (see \cite[Definition 4.1]{Pan11})\label{54-def-14}
Let $T:\R^d\to2^{\R^d}$ be a positively homogeneous set-valued mapping.
We say $F:\R^d\to2^{\R^d}$ is outer $T$-differentiable at $\vx^*$ if for any $\delta>0$, there exists a neighborhood $V$ of $\vx^*$ such that
\begin{equation}\label{eq-otd}
F(\vx)\subset F(\vx^*)+T(\vx-\vx^*)+\delta |\vx-\vx^*|B\text{ for all }\vx\in V.
\end{equation}
}
\end{deff}

The relationship between $T$-differentiability and others differentiability, and the analysis as well as computation examples of $T$-differentiability can be found in \cite{Pan11}.

\subsection{Stochastic Differential Inclusions}\label{sec:sdi}
Given a set-valued mapping $F:\R^d\to2^{\R^d}$ taking non-empty values, there exists an  $f:\R^d\to\R^d$ such that $f(\vx)\in F(\vx),\;\forall \vx\in \R^d$, such a function $f$ is called a
selector of $F$.
For an $L^2$-continuous (continuous in mean) $\F_t$-nonanticipative stochastic process $(\vX(t))_{0\leq t\leq T}$ and set-valued mapping $F_1:[0,T]\times \R^d\to 2^{\R^d}$, $F_2:[0,T]\times \R^d\to 2^{\R^{d\times m}}$ taking closed (subset) values, we denote $(F_1\circ \vX)(t)(\omega):=F_1(t,\vX(t)(\omega))$, $(F_2\circ \vX)(t)(\omega)=F_2(t,\vX(t)(\omega))$ and denote by
$S(F_1\circ \vX)$, $S(F_2\circ \vX)$ the family of all $\F_t$-nonanticipative selectors of $F_1\circ \vX$ and $F_2\circ \vX$, respectively.
Let $(\vW(t))_{0\leq t\leq T}$ be an $m$-dimensional $\F_t$-Brownian motion and define the following sets
$$
\int_s^t (F_1\circ \vX)(r)dr:=\left\{\int_0^T \1_{[s,t]}(r)f(r)dr: f\in S(F_1\circ \vX)\right\},
$$
$$
\int_s^t (F_2\circ \vX)(r)dr:=\left\{\int_0^T \1_{[s,t]}(r)g(r)d\vW(r): g\in S(F_2\circ \vX)\right\}.
$$
Consider the stochastic differential inclusion
\begin{equation}\label{sdi-eq-1}
d\vX(t)\in F_1(t,\vX(t))dt+F_2(t,\vX(t))d\vW(t).
\end{equation}

\begin{deff} {\rm (see \cite{Kis01,Kis13})
We define the (stochastic) weak solution to \eqref{sdi-eq-1} as a system consisting of a complete filtered probability space $\{\Omega,\F,\{\F_t\},\PP\}$, a continuous $\F_t$-adapted process $(\vX(t))_{0\leq t\leq T}$, and an $\F_t$-Brownian motion $\vW(t)$ satisfying
$$\vX(t)-\vX(s)\in \int_s^t F_1(r,\vX(r))dr+\int_s^t F_2(r,\vX(r))d\vW(r),\quad \forall\; 0\leq s<t\leq T,\quad\text{w.p.1}.$$
Denote by $\X_\mu(F_1,F_2)$ a set of all weak solutions to \eqref{sdi-eq-1} with an initial distribution $\mu$.
It  is called a (stochastically) strong solution (solution for short) if the complete filtered probability space and the Brownian motion have been given.
}
\end{deff}

\begin{lem}\label{5.5-lem56} $($see \cite{Kis01}$)$
Assume that $F_1,F_2$ are measurable and bounded and have convex values, where $F_2$ has convex value in the sense of that $\{g\cdot g^\top: g\in F_2(t,\vx)\}$ is convex for each $(t,\vx)\in [0,T]\times\R^d$ and $F_1(t,\cdot)$, $F_2(t,\cdot)$ are
continuous for fixed $t\in[0,T]$.
Then, for any initial distribution $\mu$, the set $\X_\mu(F_1,F_2)$ is non-empty.
\end{lem}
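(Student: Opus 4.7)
\textbf{Proof plan for Lemma \ref{5.5-lem56}.}

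The plan is to construct a weak solution via an approximation scheme combined with a compactness/tightness argument, exploiting the boundedness of $F_1,F_2$ together with the continuity and the convex-hull condition on $F_2$. First I would build, for each $n\in\N$, a piecewise-constant Euler-type approximation on a partition $0=t_0^n<t_1^n<\dots<t_{k_n}^n=T$ with mesh tending to zero. On each subinterval $[t_i^n,t_{i+1}^n)$ I would apply a measurable selection theorem (Kuratowski--Ryll-Nardzewski, using that $F_1,F_2$ are measurable with closed nonempty values) to pick Borel-measurable selectors $f_i^n(\omega)\in F_1(t_i^n,\vX^n(t_i^n)(\omega))$ and $g_i^n(\omega)\in F_2(t_i^n,\vX^n(t_i^n)(\omega))$, and then define
\begin{equation*}
\vX^n(t)=\vX_0+\int_0^t f^n(s)\,ds+\int_0^t g^n(s)\,d\vW(s),
\end{equation*}
where $\vX_0$ has law $\mu$ on some fixed stochastic basis with a Brownian motion $\vW$, and $f^n,g^n$ are the induced piecewise-constant processes.

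Next I would establish tightness of the laws of $(\vX^n,\vW)$ in $C([0,T];\R^d)\times C([0,T];\R^m)$. Since $F_1,F_2$ are uniformly bounded, the Burkholder--Davis--Gundy inequality gives uniform moment bounds and the standard Kolmogorov/Aldous-type criterion yields tightness. By Prokhorov and Skorohod's representation theorem, I can (on a new probability space) replace the sequence by an a.s.\ uniformly convergent one with limit $(\vX,\vW)$, $\vW$ still an $\F_t$-Brownian motion for the natural filtration.

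The main obstacle is to identify the limit as satisfying the inclusion
\begin{equation*}
\vX(t)-\vX(s)\in\int_s^t F_1(r,\vX(r))\,dr+\int_s^t F_2(r,\vX(r))\,d\vW(r).
\end{equation*}
For the drift part, continuity of $F_1(t,\cdot)$ together with upper semicontinuity (which follows from continuity plus closed convex values) and the closed-graph/Aumann-integral machinery of Proposition \ref{lem-41} let me pass to the limit: the Riemann-sum drift lives in $\int_s^t\mathrm{co}\,F_1(r,\vX(r)+\eps B)\,dr+\delta\bar B$ for $n$ large, and shrinking $\eps,\delta\to 0$ gives inclusion in $\int_s^t F_1(r,\vX(r))\,dr$ by convexity and closedness of values. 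The genuinely delicate step is the diffusion term: weak convergence of stochastic integrals $\int_0^t g^n\,d\vW$ does not directly place the limit in $\int F_2(r,\vX(r))\,d\vW(r)$. Here the hypothesis that $\{gg^\top:g\in F_2(t,\vx)\}$ is convex is decisive. One identifies the quadratic variation of the limit martingale $\vM(t):=\vX(t)-\vX_0-\int_0^t f(s)\,ds$ as $\int_0^t \Sigma(s)\,ds$ where $\Sigma(s)$ lies in the closed convex hull of $\{gg^\top: g\in F_2(s,\vX(s))\}=\{gg^\top:g\in F_2(s,\vX(s))\}$; by a martingale representation / square-root selection argument (e.g., Doob's measurable square root combined with Kuratowski--Ryll-Nardzewski) one obtains a measurable selector $g(s)\in F_2(s,\vX(s))$ with $gg^\top=\Sigma$, possibly after enlarging the probability space, and then expresses $\vM$ as $\int_0^\cdot g(s)\,d\wdt\vW(s)$ for a Brownian motion $\wdt\vW$ on the enlarged space.

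Finally, I would check the continuity-in-$t$ and continuity-in-$\vx$ hypotheses enter precisely where they are needed: $F_i(t,\cdot)$ continuous gives the passage from $F_i(t,\vX^n)$ to $F_i(t,\vX)$, while measurability in $t$ suffices for the integrands. The convexity of the values of $F_1$ and of $\{gg^\top: g\in F_2\}$ ensures both the Aumann-integral identities and the availability of the selection $g$, yielding a weak solution with the prescribed initial distribution $\mu$, so $\X_\mu(F_1,F_2)\neq\emptyset$.
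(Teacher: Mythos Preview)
The paper does not give its own proof of this lemma: it is stated as a background result with the citation ``(see \cite{Kis01})'' and the reader is referred to Kisielewicz's work for details. So there is no in-paper argument to compare against.

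Your sketch is a reasonable outline of the standard route and is in the spirit of Kisielewicz's approach: approximate by discretized processes built from measurable selectors, obtain tightness from the uniform boundedness of $F_1,F_2$, pass to a limit on a Skorohod space, and identify the limit via the closed-graph/convexity structure. You have correctly isolated the delicate point, namely that passing to the limit in the stochastic integral requires more than weak convergence of integrands, and that the convexity hypothesis on $\{gg^\top:g\in F_2(t,\vx)\}$ is exactly what allows one to recover a selector $g\in F_2$ from the limiting quadratic-variation density via a measurable square-root argument and martingale representation. One point to be careful with: your drift-identification step invokes Proposition~\ref{lem-41}, but that proposition is formulated for deterministic set-valued integrals; in the stochastic setting you need the Aumann-integral decomposition results for set-valued stochastic integrals as developed in \cite{Kis01,Kis13} (in particular the characterization of $\int_s^t(F_i\circ\vX)(r)\,dr$ and $\int_s^t(F_i\circ\vX)(r)\,d\vW(r)$ as closed decomposable sets), rather than the purely deterministic lemma. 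With that adjustment your plan matches the argument in the cited reference.
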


When convexity is absent, the above results
were studied in \cite{Kis05}.
For more details on stochastic differential inclusions, the reader is referred to \cite{Kis01,Kis05,Kis13} and  references therein.

\subsection{Proof of Proposition \ref{prop:tight}}\label{sec:provetight}
\begin{proof}
	Without loss of generality and for notational simplicity, we  assume that $\vx^*=\vec 0$ and verify the tightness for sequence of $\frac{\vX_n}{\sqrt{a_n}}$.
	To prove this tightness, it suffices to show that for each small $\kappa > 0$, there are finite constants $M_\kappa$ and $C_\kappa$ such that
	\begin{equation}\label{eq-cond-tight}
	\PP \Big(\frac{\vX_n}{\sqrt {a_n}}\geq C_\kappa\Big)\leq \kappa, \text{ for } n \geq M_\kappa.
	\end{equation}
	Let $\eps> 0$ be small. Because $\vX_n \to \vx^* = \vec 0$ w.p.1, for any given
	small $\nu > 0$, there exists an $N_{\nu,\eps}$ such that $|\vX_n| \leq \eps$ for $n \geq N_{\nu,\eps}$ with
	probability $\geq 1 - \nu$. By modifying the processes on a set of probability
	at most $\nu$, one can assume that $|\vX_n| \leq \eps$ for $n \geq N_{\nu,\eps}$ and that all the
	assumptions continue to hold. Denote the modified sequence by $\{\vX_n^{\nu}\}$ and if
	we can show $\{\frac{\vX_n^{\nu}}{\sqrt{a_n}}\}$ is tight for each $\eps > 0$, $\nu > 0$, then the original
	sequence is tight. Hence, for the purposes of the tightness proof and
	by shifting the time origin if needed, it can be supposed without loss of
	generality that $|\vX_n| \leq \eps$ for all $n$ for the original process, where $\eps > 0$ is
	arbitrarily small.
	
	Next, denote by $\E_n$ the conditional expectation on the
	past information up to time $n$ (i.e., the $\sigma$-algebra generated
	by $\{\xi_j : j < n\}$.
	We have that
	\begin{equation}\label{eq-tight1}
	\begin{aligned}
	\E_n\big(&V(\vX_{n+1})-V(\vX_n)\big)\\
	=&a_n\E_n\big(V_{\vx}(\vX_n)(\bar {\vec h}(\vX_n)+\vec b_n(\vX_n))\big)+a_n\E_n\big(V_{\vx}(\vX_n)(\vec h(\vX_n,\xi_n)-\bar{\vec h}(\vX_n))\big)\\
	&+O(a_n^2)\E_n|\vec b_n(\vX_n)+\vec h(\vX_n,\xi_n)|^2 \\
	\leq & a_n\E_n \max \dot{\bar V}^{G+\bar {\bold h}}(\vX_n)+a_nV_{\vx}(\vX_n)\E_n(\vec h(\vX_n,\xi_n)-\bar{\vec h}(\vX_n))+O(a_n^2)(1+V(\vX_n)) \\
	\leq & -\lambda a_n\E_n V(\vX_n)+a_nV_{\vx}(\vX_n)\E_n(\vec h(\vX_n,\xi_n)-\bar{\vec h}(\vX_n))+O(a_n^2)(1+V(\vX_n)).
	\end{aligned}
	\end{equation}
Let
$
V_1(\vx;n):=a_nV_{\vx}(\vx)\E_n(\vec h(\vx,\xi_n)-\bar{\vec h}(\vx)),
$
and define the perturbed Lyapunov function
$
\wdt V(\vx;n):=V(\vx)+V_1(\vx;n).
$
In fact, the idea of perturbed Lyapunov functional method is that the perturbations added are small in terms of order of magnitude, and they lead to desired cancellation of the un-wanted terms in \eqref{eq-tight1}.
Thus, by using the usual computation in the perturbed Lyapunov functional method (see e.g., \cite[Theorem 10.4.2, page 345-346]{KY03}), we can obtain from \eqref{eq-tight1} that
\begin{equation*}
\E_nV(\vX_{n+1})-V(\vX_n)\leq -\lambda_1 a_n V(\vX_n)+O(a_n^2),
\end{equation*}
where $0 < \lambda_1 < \lambda$. By taking $\eps$ small enough, it can be supposed that $\lambda_1$
is arbitrarily close to $\lambda$. Thus, there is a real number $K_1$ such that for all $n \geq 0$
\begin{equation}\label{eq-EVn}
\E V(\vX_{n+1})\leq \prod_{i=1}^n (1-\lambda_1 a_i)\E V(\vX_0)+ K_1\sum_{i=0}^n\prod_{j=i+1}^n (1-\lambda_1a_j)a_i^2.
\end{equation}
Therefore, it is readily seen that to obtain \eqref{eq-cond-tight}, it suffices to prove that the right side of \eqref{eq-EVn} is of the order of $a_n$.
However, this fact can be easily proved by approximating this quantity by an exponential approximation. The detail of this argument can be found in \cite[Section 10, page 342-343]{KY03} and is thus omitted here.
\end{proof}

\end{document}